\renewcommand{\geq}{\geqslant}
\renewcommand{\leq}{\leqslant}
\author{{\bf Robert C. Dalang$^*$}  and {\bf Fei Pu\footnote{Research
partially supported by the Swiss National Foundation for Scientific
Research.}}\\
\\
\it\small \'Ecole Polytechnique F\'ed\'erale de Lausanne \\
}
\title{\bf\Large{Optimal lower bounds on hitting probabilities for non-linear systems of stochastic fractional heat equations}}
\date{}
\newtheorem{stat}{Statement}[section]
\newtheorem{prop}[stat]{Proposition}
\newtheorem{corollary}[stat]{Corollary}
\newtheorem{theorem}[stat]{Theorem}
\newtheorem{lemma}[stat]{Lemma}
\theoremstyle{definition}
\newtheorem{remark}[stat]{Remark}
\numberwithin{equation}{section}
\begin{document}%\onehalfspacing
\maketitle
\begin{abstract}%\\
 We consider a system of $d$ non-linear stochastic fractional heat equations in spatial dimension $1$ driven by multiplicative $d$-dimensional space-time white noise. We establish a sharp Gaussian-type upper bound on the two-point probability density function of $(u(s, y), u (t, x))$. From this result, we deduce optimal lower bounds on hitting probabilities of the process $\{u(t, x): (t, x) \in [0, \infty[ \times \mathbb{R}\}$ in the non-Gaussian case, in terms of Newtonian capacity, which is as sharp as that in the Gaussian case. This also improves the result in Dalang, Khoshnevisan and Nualart [\textit{Probab. Theory Related Fields} \textbf{144} (2009) 371--424] for systems of classical stochastic heat equations. We also establish upper bounds on hitting probabilities of the solution in terms of Hausdorff measure.
\end{abstract}
\vskip.5in %\newpage

\noindent{\it \noindent MSC 2010 subject classification:}
Primary 60H15, 60J45; Secondary: 60H07, 60G60.\\

	\noindent{\it Keywords:}
	Hitting probabilities, systems of non-linear stochastic fractional heat equations, Malliavin calculus, Gaussian-type upper bound, space-time white noise.\\

\noindent{\it Abbreviated title: Bounds on hitting probabilities}

\section{Introduction}\label{section1}

We consider a system of non-linear stochastic fractional heat equations with vanishing initial conditions on the whole space $\mathbb{R}$, that is,
\begin{align} \label{eq2017-11-17-1}
\frac{\partial u_i}{\partial t}(t, x) &= {}_{x}D^{\alpha}u_i(t, x) +\sum_{j = 1}^d\sigma_{ij}(u(t, x))\dot{W}^j(t, x) + b_i(u(t, x)),
\end{align}
for $1 \leq i \leq d, t \in [0, T], x \in \mathbb{R}$, where $u := (u_1, \ldots, u_d)$, with initial conditions $u(0, x) = 0$ for all $x \in \mathbb{R}$.  Here, $\dot{W} := (\dot{W}^1, \ldots, \dot{W}^d)$ is a vector of $d$ independent space-time white noises on $[0, T] \times \mathbb{R}$
 defined on a probability space $(\Omega, \mathscr{F}, \mbox{P})$. The functions $b_i$, $\sigma_{ij} : \mathbb{R}^d \rightarrow \mathbb{R}$ are globally Lipschitz continuous for all $1 \leq i, j \leq d$. We set $b = (b_i)$, $\sigma = (\sigma_{ij})$. The fractional differential operator $D^{\alpha}$ ($1 < \alpha \leq 2$) is given by
 \begin{align*}
 D^{\alpha}\varphi(x) = \mathscr{F}^{-1}\{-|\lambda|^{\alpha}\mathscr{F}\{\varphi(x); \lambda\}; x\},
 \end{align*}
 where $\mathscr{F}$ denotes the Fourier transform. The operator $D^{\alpha}$ coincides with the fractional power $\alpha/2$ of the Laplacian. When $\alpha = 2$, it is Laplacian itself. For $1 < \alpha < 2$, it can also be represented by
 \begin{align*}
 D^{\alpha}\varphi(x) = c_{\alpha}\int_{\mathbb{R}}\frac{\varphi(x + y) - \varphi(x) - y\varphi'(x)}{|y|^{1 + \alpha}}dy
 \end{align*}
 with certain positive constant $c_{\alpha}$ depending only on $\alpha$; see \cite{Deb07}, \cite{DeD05}, \cite{Kom84} and \cite{ChD15}. We refer to \cite{Kwa17} for additional equivalent definitions of $D^{\alpha}$.

Eq. \eqref{eq2017-11-17-1} is formal: a rigorous formulation, following Walsh \cite{Wal86}, is as follows. For $t \geq 0$, let $\mathscr{F}_t = \sigma\{W(s, x), s \in [0, t], x \in \mathbb{R}\} \vee \mathcal{N}$, where $\mathcal{N}$ is the $\sigma$-field generated by $\mbox{P}$-null sets. A {\em mild solution}  of (\ref{eq2017-11-17-1}) is a jointly measurable $\mathbb{R}^d$-valued process $u = \{u(t, x), t \geq 0, x \in \mathbb{R}\}$, adapted to the filtration $(\mathscr{F}_t)_{t \geq 0}$, such that for $i \in \{1, \ldots, d\}$,
  \begin{align}\label{eq2017-09-04-5}
  u_i(t, x) &= \int_0^t\int_{\mathbb{R}} G_{\alpha}(t - r, x - v) \sum_{j = 1}^d\sigma_{ij}(u(r, v))W^j(dr, dv) \nonumber \\
  & \quad  + \int_0^t\int_{\mathbb{R}} G_{\alpha}(t - r, x - v)b_i(u(r, v))drdv ,
 \end{align}
  where the stochastic integral is interpreted as in \cite{Wal86} and $G_{\alpha}(t, x)$ denotes the Green kernel for the (fractional) heat equation. If $\alpha = 2$, the Green kernel $G_2(t, x)$  (denoted by $G(t, x)$) for the heat equation without boundary is given by $G(t, x) = (4\pi t)^{-1/2}\exp(-x^2/(4t))$.
 The Green kernel for the fractional heat equation ($1 < \alpha < 2$) is given via Fourier transform:
 \begin{align*}
 G_{\alpha}(t, x) = \frac{1}{2\pi}\int_{\mathbb{R}}\exp(- i\lambda x - t|\lambda|^{\alpha})d\lambda.
 \end{align*}
We refer to \cite{BMS95,ChD15,DeD05,Wu11} for the properties of the Green kernel.
In fact, to make sense of the stochastic integral in (\ref{eq2017-09-04-5}),  the function $(r, v) \mapsto 1_{\{r < t\}}G_{\alpha}(t - r, x - v)$ must belongs to $L^2([0, T] \times \mathbb{R})$. This explains the requirement $1 < \alpha \leq 2$; see also \cite{ChD15, DeD05}.

The problems of existence, uniqueness and H\"{o}lder continuity of the solution to non-linear stochastic fractional heat equations have been studied by many authors; see, e.g., \cite{AzM07, BEM10, ChD15, DeD05} and the references therein. Adapting these results to the case $d \geq 1$, one can show that there exists a unique process $u = \{u(t, x), t \geq 0, x \in \mathbb{R}\}$ that is a mild solution of (\ref{eq2017-11-17-1}), such that for any $T > 0$ and $p \geq 1$,
\begin{align}\label{eq2017-09-06-1}
\sup_{(t, x) \in [0, T] \times \mathbb{R}}\mbox{E}\left[|u_i(t, x)|^p\right] < \infty, \quad i \in \{1, \ldots, d\}.
\end{align}
Moreover, the following estimate holds for the moments of increments of the solution (see \cite[Theorem 3.1]{AzM07}): for all $s, t \in [0, T], x, y \in \mathbb{R}$ and $p > 1$,
\begin{align}\label{eq2017-08-18-1}
\mbox{E}[\|u(t, x) - u(s, y)\|^p] \leq C_{T, p} (\Delta_{\alpha}((t, x); (s, y)))^{p},
\end{align}
where $\Delta_{\alpha}$ is the fractional parabolic metric defined by
 \begin{align*}
    \Delta_{\alpha}((t, x); (s, y)) := |t - s|^{\frac{\alpha - 1}{2\alpha}} + |x - y|^{\frac{\alpha - 1}{2}}, \quad \mbox{for}\, \,\, t, \, s \in [0, T] \,\,\, \mbox{and} \,\,\, x, \, y \in \mathbb{R}.
\end{align*}

 We will also establish an analogous  estimate on the H\"{o}lder continuity of the Malliavin derivative of the solution; see Proposition \ref{prop2017-09-13-1}. We denote by $K_m = [0, m] \times [-m, m]$ and $\beta_p = 1 - \frac{2(\alpha + 1)}{p(\alpha - 1)}$ with $p > \frac{2(\alpha + 1)}{\alpha - 1}$. By Kolmogorov's continuity theorem (see \cite[Theorem 1.4.1, p.31]{Kun90} and \cite[Proposition 4.2]{ChD14}), the solution $u$ has a continuous modification which we continue to denote by $u$ that satisfies, for all integers $m$ and $0 \leq \beta < \beta_p$,
\begin{align}\label{eq2017-08-21-1}
\mbox{E}\left[\left(\sup_{\mbox{\tiny$\begin{array}{c}
 (t, x), (s, y) \in K_m \\
 (t, x) \neq (s, y)
 \end{array}$}}\frac{\|u(t, x) - u(s, y)\|}{[\Delta_{\alpha}((t, x); (s, y))]^{\beta}}\right)^p\right] < \infty.
\end{align}

Let $I \subset \, ]0, T]$ and $J \subset \mathbb{R}$ be two fixed compact intervals with positive length. We choose $m$ sufficiently large so that $I \times J \subset K_m$. We are interested in the hitting probability $\mbox{P}\{u(I \times J) \cap A \neq \emptyset \}$, where $u(I \times J)$ denotes the range of $I \times J$ under the random map $(t, x) \mapsto u(t, x)$. For systems of stochastic heat equations on the spatial interval $[0, 1]$, in the case where the noise is additive, i.e., $\sigma \equiv \mbox{Id}$, $b \equiv 0$, Dalang, Khoshnevisan and Nualart \cite{DKN07} have established upper and lower bounds on hitting probabilities for the Gaussian solution. They show that there exists $c > 0$ depending on $M, I, J$ with $M > 0$, such that, for all Borel sets $A \subseteq [-M, M]^d$,
\begin{align}\label{eq2017-10-30-100}
c^{-1} \mbox{Cap}_{d - 6}(A) \leq \mbox{P}\{u(I \times J) \cap A \neq \emptyset \} \leq c \, \mathscr{H}_{d - 6}(A),
\end{align}
where $\mbox{Cap}_{\beta}$ denotes the capacity with respect to the Newtonian $\beta$-kernel and $\mathscr{H}_{\beta}$ denotes the $\beta$-dimensional Hausdorff measure (see \eqref{eq2018-04-30-4}, \eqref{eq2018-04-30-5} for definitions). If the noise is multiplicative, i.e., $\sigma$ and $b$ are not constants (but are sufficiently regular), then using techniques of Malliavin calculus, Dalang, Khoshnevisan and Nualart \cite{DKN09} have obtained upper and lower bounds on hitting probabilities for the non-Gaussian solution. Indeed, they prove that there exists $c > 0$ depending on $M, I, J, \eta$ with $M > 0, \eta > 0$, such that, for all Borel sets $A \subseteq [-M, M]^d$,
\begin{align}\label{eq2017-10-30-1000}
c^{-1} \mbox{Cap}_{d + \eta - 6 }(A) \leq \mbox{P}\{u(I \times J) \cap A \neq \emptyset \} \leq c \, \mathscr{H}_{d - \eta - 6 }(A).
\end{align}
Furthermore, these results have been extended to higher spatial dimensions driven by spatially homogeneous noise in \cite{DKN13}. This type of question has also been studied for systems of stochastic wave equations in \cite{DaN04}, and in higher spatial dimensions \cite{DaS10} and \cite{DaS15}, and for systems of stochastic Poisson equations \cite{SaV16}.

The objective of this paper is to remove the $\eta$ in the dimension of capacity in \eqref{eq2017-10-30-1000} so that the lower bound on hitting probabilities is consistent with the Gaussian case in \eqref{eq2017-10-30-100}, and to generalize these results to systems of stochastic fractional heat equations.

Consider the following three hypotheses on the coefficients of the system (\ref{eq2017-11-17-1}):
\begin{enumerate}
  \item [\textbf{P1}] The functions $\sigma_{ij}$ and $b_i$ are bounded and infinitely differentiable with bounded partial derivatives of all orders, for $1 \leq i, j \leq d$.
  \item [\textbf{P1'}] The functions $\sigma_{ij}$ and $b_i$ are infinitely differentiable with bounded partial derivatives of all positive orders, and the $\sigma_{ij}$ are bounded, for $1 \leq i, j \leq d$.
  \item [\textbf{P2}] The matrix $\sigma$ is uniformly elliptic, that is, $\|\sigma(x)\xi\|^2 \geq \rho^2 > 0$ for some $\rho > 0$, for all $x \in \mathbb{R}^d, \|\xi\| = 1$.
\end{enumerate}
Notice that hypothesis \textbf{P1'} is weaker than hypothesis \textbf{P1}, since in \textbf{P1'}, the functions $b_i$, $i = 1, \ldots, d$ are not assumed to be bounded.

Adapting the results from \cite{BEM10} to the case $d \geq 1$, the $\mathbb{R}^d$-valued random vector $u(t, x) = (u_1(t, x), \ldots, u_d(t, x))$ admits a smooth probability density function, denoted by $p_{t, x}(\cdot)$ for all $(t, x) \in [0, T] \times \mathbb{R}$: see our Proposition \ref{prop2017-09-07-1}. For $(s, y) \neq (t, x)$, let $p_{s, y; t, x}(\cdot, \cdot)$ denote the joint density function of the $\mathbb{R}^{2d}$-valued random vector
\begin{align*}
(u(s, y), u(t, x)) = (u_1(s, y), \ldots, u_d(s, y), u_1(t, x), \ldots, u_d(t, x))
\end{align*}
(the existence of $p_{s, y; t, x}(\cdot, \cdot)$ is a consequence of our Theorem \ref{th2017-08-24-1}, (\ref{eq2017-09-05-5}) and Proposition \ref{prop2017-09-19-3}).

\begin{theorem}\label{theorem2017-08-17-1}
Assume \textbf{P1} and \textbf{P2}. Fix $T > 0$ and let $I \subset \, ]0, T]$ and $J \subset \mathbb{R}$ be two fixed non-trivial compact intervals.
\begin{enumerate}
  \item [(a)] The density $p_{t, x}(z)$ is a $C^{\infty}$ function of $z$ and is uniformly bounded over $z \in \mathbb{R}^d$ and $(t, x) \in I \times J$.
    \item [(b)] There exists $c > 0$ such that for all $s, t \in I, x, y \in J$ with $(s, y) \neq (t, x)$  and $z_1, z_2 \in \mathbb{R}^d$,
\begin{align} \label{eq2017-11-20-1}
p_{s, y; t, x}(z_1, z_2) &\leq c(\Delta_{\alpha}((t, x); (s, y)) )^{-d}\exp\left(-\frac{\|z_1 - z_2\|^2}{c(\Delta_{\alpha}((t, x); (s, y)))^2}\right).
\end{align}
\end{enumerate}
\end{theorem}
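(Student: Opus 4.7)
The plan is to combine Malliavin calculus with a sharp non-degeneracy estimate for the two-point Malliavin covariance matrix of the solution. For part (a), I would first establish, under \textbf{P1}, that $u(t,x)\in\mathbb{D}^\infty(\mathbb{R}^d)$ by the standard Picard iteration applied to the Malliavin derivative of \eqref{eq2017-09-04-5}, so that $Du(t,x)$ satisfies an integral equation with principal part $G_\alpha(t-r,x-v)\sigma(u(r,v))\1_{\{r<t\}}$. Under \textbf{P2}, the quadratic form of the Malliavin matrix $\gamma_{t,x}$ is bounded below by $\rho^2\int_0^t\int_\mathbb{R} G_\alpha(t-r,x-v)^2\,\d v\,\d r$ modulo a Malliavin-derivative remainder, and this integral admits a uniform lower bound on $I\times J$. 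A localization argument would then yield $\sup_{(t,x)\in I\times J}\mathrm{E}[(\det\gamma_{t,x})^{-p}]<\infty$ for every $p\geq 1$, and the Malliavin density formula gives both the $C^\infty$-smoothness of $p_{t,x}$ and the uniform $L^\infty$-bound on $I\times J$.

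For part (b), I would reduce to bounding the joint density of the pair $(F,Z):=(u(s,y),\,u(t,x)-u(s,y))$: since $p_{s,y;t,x}(z_1,z_2)$ equals the joint density of $(F,Z)$ at $(z_1,z_2-z_1)$, with $w:=z_2-z_1$ and $\Delta:=\Delta_\alpha((t,x);(s,y))$ the target becomes
\begin{align*}
p_{F,Z}(z_1,w)\leq C\,\Delta^{-d}\exp\!\left(-\|w\|^2/(C\Delta^2)\right).
\end{align*}
The central step is to prove the sharp non-degeneracy estimate
\begin{align*}
\mathrm{E}\!\left[(\det\gamma_{F,Z})^{-p}\right]\leq C\,\Delta^{-2dp}\qquad\text{for all }p\geq 1,
\end{align*}
uniformly in $(s,y),(t,x)\in I\times J$. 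Writing $\gamma_{F,Z}$ as a $2\times 2$ block matrix and invoking the Schur-complement identity, this reduces to $\det(\gamma_Z-\gamma_{Z,F}\gamma_F^{-1}\gamma_{F,Z})\gtrsim \Delta^{2d}$. Under \textbf{P2}, I would bound the latter from below by the $\mathcal{H}$-norm of the component of $DZ$ driven by the ``new noise'', i.e.\ restricted to the space-time region $[s,t]\times\mathbb{R}$ together with a spatial strip of width $\sim|x-y|$ near $(s,y)$; this component is orthogonal to $DF$ up to a H\"older-in-$\Delta$ residual controlled via Proposition~\ref{prop2017-09-13-1}, and its $\mathcal{H}$-norm is precisely of order $\Delta$.

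The Malliavin density formula, after $2d$ integrations by parts, then gives
\begin{align*}
p_{F,Z}(z_1,w)=\mathrm{E}\!\left[\1_{\{F\geq z_1,\,Z\geq w\}}\,H(F,Z)\right]
\end{align*}
for a Skorohod weight $H$ with $\|H\|_{L^p(\Omega)}\leq C\,\Delta^{-d}$, by the estimate above together with the Meyer-type bounds on the Malliavin norms of $(F,Z)$. The Gaussian factor in $w$ will be extracted from the sub-Gaussian bound $\mathrm{E}[\exp(\lambda\|Z\|^2/\Delta^2)]\leq C$ for some $\lambda>0$, obtained by Burkholder--Davis--Gundy applied to $u(t,x)-u(s,y)$ together with \eqref{eq2017-08-18-1}; combining these via H\"older's inequality and the standard sign-change trick in the integration-by-parts identity (to cover all orthants of $w$) yields the target bound. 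The decisive obstacle will be the sharp non-degeneracy estimate, i.e.\ obtaining exactly $\Delta^{2d}$ with no polynomial loss: the earlier argument in \cite{DKN09} incurred an extra $\Delta^{-\eta}$ factor responsible for the $\eta$-loss in \eqref{eq2017-10-30-1000}, and closing this gap requires precisely the tight noise decomposition sketched above together with the H\"older-type moment estimates on $Du$ of Proposition~\ref{prop2017-09-13-1}.
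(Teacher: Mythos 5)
Part (a) of your proposal follows essentially the same path as the paper: show $u(t,x)\in(\mathbb{D}^\infty)^d$, bound $\mathrm{E}[(\det\gamma_{u(t,x)})^{-p}]$ uniformly on $I\times J$ using \textbf{P2} and the explicit principal part of $Du$, then invoke the criterion of Proposition~\ref{prop2017-08-24-2}. That part is fine.

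For part (b), your algebraic reduction differs from the paper's: you propose controlling $\det\gamma_{(F,Z)}$ via the Schur-complement identity $\det\gamma_{(F,Z)}=\det\gamma_F\cdot\det\bigl(\gamma_Z-\gamma_{Z,F}\gamma_F^{-1}\gamma_{F,Z}\bigr)$, whereas the paper follows \cite{DKN09} and factors $\det\gamma_Z$ over an orthonormal eigenbasis of the full $2d\times 2d$ matrix, splitting the eigenvectors into those with ``large'' and ``small'' projection onto $E_1=\mathbb{R}^d\times\{\mathbf{0}\}$ and estimating the two groups separately (Propositions~\ref{prop2017-09-19-4} and \ref{prop2017-09-19-5}). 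The Schur-complement route is a reasonable alternative reduction, and the orthogonality you note between $DF$ (supported on $r\in[0,s]$) and the ``new noise'' part of $DZ$ (supported on $r\in[s,t]$) is genuine. So the skeleton is defensible, but not the one the paper uses.

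The decisive gap is in the justification of the sharp lower bound for the Schur complement. You say this comes from a ``tight noise decomposition... together with the H\"older-type moment estimates on $Du$ of Proposition~\ref{prop2017-09-13-1}.'' But Proposition~\ref{prop2017-09-13-1} is precisely the analogue of \cite[Proposition 6.2]{DKN09} and carries no improvement over the ingredients already available there; with only that input, one reproduces the $\eta$-loss of \eqref{eq2017-10-30-1000} rather than removing it. The improvement that makes \eqref{eq2017-11-20-1} possible is a separate, new quantitative estimate on the remainder terms $a_i(k,r,v,t,x)$ in \eqref{eq2017-09-05-3}: Lemma~\ref{lemma2017-09-14-1} improves Morien's bound $\mathrm{E}\bigl[(\int_{s-\epsilon}^s\!\int_{\mathbb{R}}|D_{r,v}u(t,x)|^2\,\d v\,\d r)^p\bigr]\lesssim\epsilon^{(\alpha-1)p/\alpha}$ to $\epsilon^{(1-\gamma_0/\alpha)p}$ in the regime $t-s>c_0\epsilon^{\gamma_0}$, and this feeds through Lemma~\ref{lemma2017-09-19-1} into the small-eigenvalue estimate Proposition~\ref{prop2017-09-19-4}, which is exactly where the $\eta$ used to appear. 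Whether one routes through eigenvectors or through a Schur complement, some such improved control on the $a_i$-terms on a thin time slab is indispensable; your sketch does not identify it, so as written the proposal cannot close the gap it correctly flags as decisive. Secondarily, your Gaussian factor is obtained by a sub-Gaussian moment bound via BDG and \eqref{eq2017-08-18-1}, while the paper uses Girsanov plus the exponential martingale inequality (which is where boundedness of $b$ under \textbf{P1} enters); your variant is plausible but would require tracking the $p$-dependence of the constants in \eqref{eq2017-08-18-1}, which is not stated there.
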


\begin{remark}\label{remark2017-11-20-1}
\begin{enumerate}
 \item [(a)] Theorem \ref{theorem2017-08-17-1}(a) remains valid under a slightly weaker version of \textbf{P1}, in which the $b_i, \, \sigma_{ij}$ need not be bounded (but their derivatives of all positive orders are bounded).
 \item [(b)] With hypothesis \textbf{P1} replaced by the slightly weaker version \textbf{P1'} in Theorem \ref{theorem2017-08-17-1}, the estimate  \eqref{eq2017-11-20-1} in statement (b) is replaced by: There exists $c > 0$ such that for all $s, t \in I, x, y \in J$ with $(s, y) \neq (t, x)$, $z_1, z_2 \in \mathbb{R}^d$ and $p \geq 1$,
\begin{align}\label{eq2017-11-20-2}
p_{s, y; t, x}(z_1, z_2) &\leq c(\Delta_{\alpha}((t, x); (s, y)) )^{-d}\left[\frac{(\Delta_{\alpha}((t, x); (s, y)))^2}{\|z_1 - z_2\|^2} \wedge 1\right]^{p/(4d)}.
\end{align}
\end{enumerate}
\end{remark}

The right-hand side of \eqref{eq2017-11-20-2} is larger than the r.h.s. of \eqref{eq2017-11-20-1} (after adjusting the constant).
In fact, the boundedness of the functions $b_i, \, i = 1, \ldots, d$ in hypothesis \textbf{P1} is only used when we derive the exponential factor on the right-hand side of \eqref{eq2017-11-20-1} by applying Girsanov's theorem. However, under the hypothesis \textbf{P1'}, when $b_i$ is not bounded, Girsanov's theorem is no longer applicable. We establish \eqref{eq2017-11-20-2} in Section \ref{section2.5.2} and, following \cite{DKN13, DaS15}, show in Section \ref{section2.2.3} that this estimate is also sufficient for our purposes.

We prove the smoothness and uniform boundedness of the one-point density (Theorem \ref{theorem2017-08-17-1}(a)) in Section \ref{section2.3}. We present the Gaussian-type upper bound on the two-point density (Theorem \ref{theorem2017-08-17-1}(b)) in Section \ref{section2.5.2}.

We will also need the strict positivity of $p_{t, x}(\cdot)$.
\begin{theorem}\label{th2018-04-26-1}
Assume $\textbf{P1'}$ and $\textbf{P2}$. For all $(t, x) \in\ ]0, T] \times \mathbb{R}$ and $z \in \mathbb{R}^d$, the density $p_{t, x}(z)$ is strictly positive.
\end{theorem}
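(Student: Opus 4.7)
The plan is to combine the $C^\infty$-smoothness of $p_{t, x}(\cdot)$ (Theorem \ref{theorem2017-08-17-1}(a), valid under \textbf{P1'} and \textbf{P2} by Remark \ref{remark2017-11-20-1}(a)) with a Girsanov-type transport that shifts the law of $u(t, x)$ onto an arbitrary target $z \in \mathbb{R}^d$. The three main steps are: (i) decompose the noise at time $t - \epsilon$ for small $\epsilon > 0$; (ii) on the short interval $[t-\epsilon, t]$, design an $\mathscr{F}_{t-\epsilon}$-measurable shift whose Girsanov effect is to push the conditional mean of $u(t, x)$ exactly to $z$; (iii) transfer strict positivity back to $p_{t, x}(z)$ via equivalence of the two measures.

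Fix $(t, x) \in\, ]0, T] \times \mathbb{R}$, $z \in \mathbb{R}^d$, and $\epsilon \in\, ]0, t[$. Decompose \eqref{eq2017-09-04-5} as $u(t, x) = A_\epsilon + B_\epsilon$, where $A_\epsilon$ is $\mathscr{F}_{t-\epsilon}$-measurable and $B_\epsilon$ depends only on the noise restricted to $[t-\epsilon, t] \times \mathbb{R}$. By \textbf{P2}, $\sigma(y)$ is invertible with $\|\sigma^{-1}(y)\| \leq \rho^{-1}$ for all $y \in \mathbb{R}^d$. For $r \in\, ]t-\epsilon, t]$, set
\begin{equation*}
h(r, v) := \sigma^{-1}(u(r, v))\, g(r, v), \qquad g(r, v) := c_\epsilon\, G_\alpha(t-r, x-v)\, (z - A_\epsilon),
\end{equation*}
with normalization $c_\epsilon := \bigl(\int_{t-\epsilon}^{t} \int_{\mathbb{R}} G_\alpha(t-r, x-v)^2\, dr\, dv\bigr)^{-1}$, which is finite and positive for $1 < \alpha \leq 2$. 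On the event $\Omega_N := \{\|z - A_\epsilon\| \leq N\}$, which has positive probability for $N$ large by \eqref{eq2017-09-06-1}, one has $\|h\|_{L^2([t-\epsilon, t] \times \mathbb{R})} \leq C(N, \epsilon)$, so Novikov's condition holds and the stochastic exponential $\mathcal{E}(\int h \cdot dW)$ defines an equivalent probability $d\tilde{\P} := \mathcal{E}(\int h \cdot dW)\, d\P$. Under $\tilde{\P}$, $\tilde W := W - \int h\, dr\, dv$ is again a $d$-dimensional space-time white noise, and by the choice of $c_\epsilon$ the additional drift on $[t-\epsilon, t]$ contributes exactly $z - A_\epsilon$, so that the conditional $\tilde{\P}$-mean of $u(t, x)$ given $\mathscr{F}_{t-\epsilon}$ equals $z$ up to an $O(\epsilon)$ error from the $b$-term on $\Omega_N$, with conditional $\tilde{\P}$-variance of order $\epsilon^{(\alpha-1)/\alpha}$.

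The Malliavin arguments underlying Theorem \ref{theorem2017-08-17-1}(a) apply verbatim to the shifted system (whose coefficients still satisfy \textbf{P1'} and \textbf{P2}), so the $\tilde{\P}$-density $\tilde p_{t, x}$ of $u(t, x)$ is $C^\infty$. A small-ball anticoncentration estimate coming from \textbf{P2} and the Gaussian character of the dominant chaos of $B_\epsilon$ yields $\tilde p_{t, x}(y) \geq c > 0$ on a neighborhood of $z$ for $\epsilon$ small enough on $\Omega_N$, hence $\tilde p_{t, x}(z) > 0$. Since the Radon--Nikodym derivative $d\P/d\tilde{\P}$ is $\P$-a.s.\ strictly positive with all moments finite, a standard density-transformation argument propagates strict positivity to $p_{t, x}(z) > 0$.

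The main obstacle is the possible unboundedness of $b_i$ under \textbf{P1'}, where $b_i$ is only of linear growth; this obstructs a direct global verification of Novikov's condition. It is overcome by the standard truncation: for each $N$, replace $b_i$ by $b_i \chi_N$ with a smooth cutoff $\chi_N$ at level $N$, carry out the above scheme for the bounded-drift system, and pass to the limit using the uniform sup-estimate \eqref{eq2017-08-21-1}, which ensures that on an event whose probability tends to one the truncated and original mild solutions coincide throughout $[0, T] \times \mathbb{R}$.
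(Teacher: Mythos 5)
The paper does not actually contain a proof of Theorem~\ref{th2018-04-26-1}: after the statement it says the argument is ``parallel to that in \cite{Nua13}'' and defers to the thesis \cite{Pu18} for details. The method in \cite{Nua13} is a Malliavin-calculus positivity criterion in the Bally--Pardoux tradition: perturb the noise in Cameron--Martin directions chosen, via \textbf{P2}, so that the induced perturbation map at $u(t,x)$ is uniformly nondegenerate, and then conclude positivity from the integration-by-parts structure. This is a genuinely different route from the Girsanov change-of-measure scheme you propose.

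The decisive gap in your scheme is the final step. You assert that since $L := d\P/d\tilde{\P}$ is a.s.\ strictly positive and has all moments, a ``standard density-transformation argument'' carries $\tilde{p}_{t,x}(z)>0$ over to $p_{t,x}(z)>0$. This implication does not hold in general: $L$ is a functional of the whole noise path on $[t-\epsilon,t]\times\mathbb{R}$ and not a function of $u(t,x)$, so the densities $p_{t,x}$ and $\tilde{p}_{t,x}$ are not pointwise comparable. Writing $p_{t,x}(z)=\lim_n\E_{\tilde{\P}}[\varphi_n(u(t,x))\,L]$ for nonnegative $\varphi_n\to\delta_z$, the limit may vanish even though $\lim_n\E_{\tilde{\P}}[\varphi_n(u(t,x))]=\tilde{p}_{t,x}(z)>0$ and $L>0$ a.s., because $\varphi_n(u(t,x))$ may concentrate its mass where $L$ is small; a toy example is $\Omega=[0,1]$ with $\tilde{\P}$ Lebesgue measure, $L(\omega)=2\omega$, $X_n=n\mathbf{1}_{[0,1/n]}$, for which $\E_{\tilde{\P}}[X_n]=1$ and $L>0$ a.s.\ yet $\E_{\tilde{\P}}[X_nL]=1/n\to0$. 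To repair this you would need to argue at the level of the \emph{conditional} density of $u(t,x)$ given $\mathscr{F}_{t-\epsilon}$ (showing it is positive at $z$ on an event of positive $\P$-probability) or to control $L$ from below on $\{u(t,x)\in B(z,\rho)\}$; neither is immediate from what you have written. Two secondary issues: Novikov's condition is verified only on $\Omega_N$, so Girsanov cannot be invoked without first truncating the shift $h$; and since \eqref{eq2017-08-21-1} controls the supremum of $u$ only over compact rectangles $K_m$, it does not justify your claim that the $b$-truncated and original mild solutions coincide ``throughout $[0,T]\times\mathbb{R}$'' on a high-probability event (indeed, $x\mapsto u(t,x)$ is a.s.\ unbounded on $\mathbb{R}$).
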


\begin{remark}\label{remark2017-11-21-1}
The results of Theorems \ref{theorem2017-08-17-1}, \ref{th2018-04-26-1} and Remark \ref{remark2017-11-20-1} (as well as Theorems \ref{theorem2017-08-18-1}, \ref{theorem2017-08-18-2} below) include the case $\alpha = 2$, that is, they apply to the solutions of the stochastic heat equations with Neumann or Dirichlet boundary conditions; see Remark \ref{remark2017-11-21-6}.
\end{remark}

The proof of strict positivity of the one-point density (Theorem \ref{th2018-04-26-1}) is parallel to that in \cite{Nua13}. We refer to \cite{Pu18} for details. We mention that Chen, Hu and Nualart \cite{CHN16} have recently studied the strict positivity of the density on the support of the law for the non-linear stochastic fractional heat equation without drift term and with measure-valued initial data and unbounded diffusion coefficient.

Our main contribution is to obtain the Gaussian-type upper bound in Theorem \ref{theorem2017-08-17-1}(b), which is an improvement over  \cite[Theorem 1.1(c)]{DKN09}. There, for the stochastic heat equation, the optimal Gaussian-type upper bound was shown to hold when $t = s$, while an extra term $\eta$ appeared in the exponent when $t \neq s$; see \cite[Theorem 1.1]{DKN09}. We manage to remove this $\eta$ in the Gaussian-type upper bound on the joint density in \cite[Theorem 1.1(c)]{DKN09}, so that this becomes the best possible upper bound, as in the Gaussian case. This requires a detailed analysis of the small eigenvalues of the Malliavin matrix $\gamma_Z$ of $Z := (u(s, y), u(t, x) - u(s, y))$; see Proposition \ref{prop2017-09-19-4}. We prove Proposition \ref{prop2017-09-19-4} by giving a better estimate on the Malliavin derivative of the solution; see Lemma \ref{lemma2017-09-14-1}, which, for a certain range of parameters,  is an improvement of Morien \cite[Lemma 4.2]{Mor99}; see also Lemma \ref{lemma2017-09-06-1}. This estimate is used in Lemma \ref{lemma2017-09-19-1} to obtain a bound on the integral terms in the Malliavin derivative of $u$ (compare with \cite[Lemma 6.11]{DKN09}), then in Proposition \ref{prop2017-09-19-4} to bound negative moments of the smallest eigenvalue of the Malliavin matrix (compare with \cite[Proposition 6.9]{DKN09}), and finally in Proposition \ref{prop2017-09-19-3} and Theorem \ref{th2017-09-22-1} to bound negative moments of the Malliavin matrix (compare with \cite[Proposition 6.6]{DKN09} and \cite[Theorem 6.3]{DKN09}). This improves the result of \cite[Theorem 1.1(c)]{DKN09}, and the method extends to systems of stochastic fractional heat equations \eqref{eq2017-11-17-1} for $1 < \alpha \leq 2$ with a unified proof.

Coming back to potential theory, let us introduce some notation, following \cite{Kho02}. For all Borel sets $F \subseteq \mathbb{R}^d$, we define $\mathscr{P}(F)$ to be the set of all probability measures with compact support contained in $F$. For all integers $k \geq 1$ and $\mu \in \mathscr{P}(\mathbb{R}^k)$, we let $I_{\beta}(\mu)$ denote the $\beta$-{\em dimensional energy} of $\mu$, that is,
\begin{align*}
    I_{\beta}(\mu) := \iint \mbox{K}_{\beta}(\|x - y\|)\mu(dx)\mu(dy),
\end{align*}
where $\|x\|$ denotes the Euclidian norm of $x \in \mathbb{R}^k$,
\begin{align} \label{eq2017-08-23-100}
    \mbox{K}_{\beta}(r) := \left\{\begin{array}{cc}
                             r^{-\beta} & \mbox{if} \, \,\,  \beta > 0, \\
                             \log_+(1/r)  &  \mbox{if} \, \, \, \beta = 0,\\
                             1  &   \mbox{if} \, \, \, \beta < 0,
                           \end{array} \right.
\end{align}
where $\log_+(x): = \log(x \vee e)$.

For all $\beta \in \mathbb{R}$, integers $k \geq 1$, and Borel sets $F \subseteq \mathbb{R}^k$, $\mbox{Cap}_{\beta}(F)$ denotes the $\beta$-{\em dimensional capacity} of $F$, that is,
\begin{align} \label{eq2018-04-30-4}
    \mbox{Cap}_{\beta}(F) := \left[\inf\limits_{\mu \in \mathscr{P}(F)}I_{\beta}(\mu)\right]^{-1},
\end{align}
where $1/\infty := 0$. Note that if $\beta < 0$, then $\mbox{Cap}_{\beta}(\cdot) \equiv 1$.

Given $\beta \geq 0$, the $\beta$-dimensional {\em Hausdorff measure} of $F$ is defined by
\begin{align}\label{eq2018-04-30-5}
\mathscr{H}_{\beta}(F) = \lim_{\epsilon \rightarrow 0^{+}} \inf\left\{\sum_{i = 1}^{\infty}(2r_i)^{\beta}: F \subseteq \bigcup_{i = 1}^{\infty}B(x_i, r_i), \sup_{i \geq 1}r_i \leq \epsilon\right\}.
\end{align}
When $\beta < 0$, we define $\mathscr{H}_{\beta}(F)$ to be infinite.

 Using Theorems \ref{theorem2017-08-17-1}, \ref{th2018-04-26-1}, Remark \ref{remark2017-11-20-1}
 together with results from Dalang, Khoshnevisan and Nualart \cite{DKN07}, we shall prove the following results for the hitting probabilities of the solution.
\begin{theorem} \label{theorem2017-08-18-1}
Assume \textbf{P1'} and \textbf{P2}. Fix $T > 0, M > 0$ and $\eta > 0$. Let $I \subset \, ]0, T]$ and $J \subset \mathbb{R}$ be two fixed non-trivial compact intervals.
\begin{enumerate}
  \item [(a)] There exists $c_1 > 0$ depending on $I, J$ and $M$, and $c_2 > 0$ depending on $I, J$ and $\eta$ such that for all compact sets $A \subseteq [-M, M]^d$,
   \begin{align*}
   c_1\, \mbox{Cap}_{d - \frac{2(\alpha + 1)}{\alpha - 1}}(A) \leq \mathrm{P}\{u(I \times J) \cap A \neq \emptyset\} \leq c_2 \, \mathscr{H}_{d - \frac{2(\alpha + 1)}{\alpha - 1} - \eta}(A).
  \end{align*}
  \item [(b)] For all $t \in \, ]0, T]$, there exists $c_1 > 0$ depending on $J$ and $M$, and $c_2 > 0$ depending on $J$ and $\eta$ such that for all compact sets $A \subseteq [-M, M]^d$,
   \begin{align*}
   c_1\, \mbox{Cap}_{d - \frac{2}{\alpha - 1}}(A) \leq \mathrm{P}\{u(\{t\} \times J) \cap A \neq \emptyset\} \leq c_2 \, \mathscr{H}_{d - \frac{2}{\alpha - 1} - \eta}(A).
  \end{align*}
  \item [(c)] For all $x \in \mathbb{R}$, there exists $c_1 > 0$ depending on $I$ and $M$, and $c_2 > 0$ depending on $I$ and $\eta$ such that for all compact sets $A \subseteq [-M, M]^d$,
   \begin{align*}
   c_1\, \mbox{Cap}_{d - \frac{2\alpha}{\alpha - 1}}(A) \leq \mathrm{P}\{u(I \times \{x\}) \cap A \neq \emptyset\} \leq c_2 \, \mathscr{H}_{d - \frac{2\alpha}{\alpha - 1} - \eta}(A).
  \end{align*}
\end{enumerate}
\end{theorem}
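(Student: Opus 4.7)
The plan is to deduce all three parts of Theorem \ref{theorem2017-08-18-1} from the density estimates (Theorem \ref{theorem2017-08-17-1}, Remark \ref{remark2017-11-20-1}, Theorem \ref{th2018-04-26-1}) and the modulus of continuity \eqref{eq2017-08-21-1}, by plugging them into the potential-theoretic machinery developed in \cite{DKN07, DKN09, DKN13}. The three statements differ only in the choice of parameter set and the metric one uses on it. On $I \times J$ the natural (quasi-)metric is the full $\Delta_{\alpha}$; on $\{t\} \times J$ it reduces to $|x-y|^{(\alpha-1)/2}$; on $I \times \{x\}$ it reduces to $|t-s|^{(\alpha-1)/(2\alpha)}$. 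The corresponding parabolic dimensions of the parameter set are $\kappa_a = \frac{2(\alpha+1)}{\alpha-1}$, $\kappa_b = \frac{2}{\alpha-1}$, $\kappa_c = \frac{2\alpha}{\alpha-1}$, which are precisely the constants appearing in the statement.

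\emph{Upper bounds.} I would use a covering argument. Fix $\eta>0$ and let $\kappa$ denote the relevant parabolic dimension. Given a cover of $A$ by Euclidean balls $B(z_i, r_i)$ with small radii, partition the parameter set into cells of $\Delta_{\alpha}$-diameter of order $r_i$; there are of order $r_i^{-\kappa}$ such cells. On a cell centered at $(t_i^{\ast}, x_i^{\ast})$, the event that $u$ enters $B(z_i, r_i)$ implies either $u(t_i^{\ast}, x_i^{\ast}) \in B(z_i, 2 r_i)$, which by Theorem \ref{theorem2017-08-17-1}(a) has probability at most $c\, r_i^d$, or that the oscillation of $u$ on the cell exceeds $r_i$, whose probability decays faster than any power of $r_i$ by choosing $p$ sufficiently large in \eqref{eq2017-08-21-1}. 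Summing over cells and over $i$, then letting $\epsilon \to 0$ in the Hausdorff definition \eqref{eq2018-04-30-5} with exponent $d - \kappa - \eta$, gives the upper bound; the slack $\eta$ is consumed by the Kolmogorov estimate.

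\emph{Lower bounds.} I would use a second-moment (Paley--Zygmund) method. For $\mu \in \mathscr{P}(A)$ with $I_{d-\kappa}(\mu) < \infty$, set
\[
J_{\epsilon}(\mu) = \int_{I\times J}\int_{\mathbb{R}^d} \frac{\mathbf{1}_{B(z, \epsilon)}(u(t,x))}{(2\epsilon)^d}\, \mu(dz)\, dt\, dx .
\]
By Theorems \ref{theorem2017-08-17-1}(a) and \ref{th2018-04-26-1}, the one-point density $p_{t,x}(z)$ is continuous and strictly positive, so it is uniformly bounded below on the compact set $I \times J \times [-M, M]^d$; this gives $\mathrm{E}[J_{\epsilon}(\mu)] \ge c > 0$ uniformly in $\epsilon$. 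For the second moment, writing $\mathrm{E}[J_{\epsilon}(\mu)^2]$ as an integral over $(t,x), (s,y), z_1, z_2$ and using the two-point bound \eqref{eq2017-11-20-1} (or its weaker substitute \eqref{eq2017-11-20-2}), the change of variable $z_2 - z_1 = \Delta_{\alpha}((t,x);(s,y))\, w$ together with integration in $(s,y)$ yields
\[
\mathrm{E}[J_{\epsilon}(\mu)^2] \le c \iint \mathrm{K}_{d - \kappa}(\|z_1 - z_2\|)\, \mu(dz_1)\, \mu(dz_2) = c\, I_{d - \kappa}(\mu).
\]
Paley--Zygmund then gives $\mathrm{P}\{u(I\times J) \cap A^{(\epsilon)} \neq \emptyset\} \ge c / I_{d - \kappa}(\mu)$; letting $\epsilon\to 0$ and taking the infimum over $\mu \in \mathscr{P}(A)$ yields the capacity lower bound. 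For (b) and (c) the argument is identical after replacing $\Delta_{\alpha}$ by its spatial or temporal component and $\kappa_a$ by $\kappa_b$ or $\kappa_c$.

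The main obstacle is the second-moment computation under the weaker hypothesis \textbf{P1'}, where only the polynomial estimate \eqref{eq2017-11-20-2} is available. Here one must verify that after the $w$-change of variable, the remaining integral $\int ((1 + \|w\|)^{-p/(2d)} \wedge 1)\, dw$ is finite and produces the correct kernel $\mathrm{K}_{d-\kappa}$; this is where one follows \cite{DKN13, DaS15} (as announced in Section \ref{section2.2.3}) and chooses $p$ large enough to guarantee integrability. Once this is settled, the three parts differ only in bookkeeping on which component of the parabolic metric survives.
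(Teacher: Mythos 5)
Your proposal is correct and follows essentially the same route as the paper: a covering argument combining the bounded one-point density (Theorem \ref{theorem2017-08-17-1}(a)) with the Kolmogorov estimate \eqref{eq2017-08-21-1} for the upper bounds (via Lemma \ref{lemma2017-08-18-1}), and a second-moment Paley--Zygmund argument in the framework of \cite{DKN07}, using the two-point density bound of Remark \ref{remark2017-11-20-1}(b) together with the integral estimate of Lemma \ref{lemma2017-11-20-5}, for the lower bounds. Two small points of presentation that do not change the approach: the paper bounds the expectation of $J_{\epsilon}(\mu)$ from below via the integrated positivity \eqref{eq2017-08-23-4} rather than a pointwise lower bound on the density, and the change of variables in the second-moment estimate is carried out in the parameter variables with $\|z_1-z_2\|$ held fixed (since $\mu$ need not be absolutely continuous), which is exactly what Lemma \ref{lemma2017-11-20-5} does.
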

The optimal lower bounds for the hitting probabilities on the left-hand sides of Theorem \ref{theorem2017-08-18-1} are mainly the consequence of the sharp upper bound on the two-point density function in \eqref{eq2017-11-20-2} (or the sharp Gaussian-type upper bound \eqref{eq2017-11-20-1} under the slightly stronger condition \textbf{P1}). And for $\alpha = 2$, these results also extend to systems of classical stochastic heat equations on a bounded interval with Dirichlet or Neumann boundary conditions; see Remark \ref{remark2017-11-21-1}. The upper bounds on hitting probabilities on the right-hand sides of Theorem \ref{theorem2017-08-18-1} are an extension to $1 < \alpha \leq 2$ of the corresponding results of \cite[Theorem 1.2]{DKN09} for $\alpha = 2$.

If $\sigma \equiv \mbox{Id}$ and  $ b \equiv 0$, by \cite[Theorem 7.6]{Xia09}, the upper bounds in Theorem \ref{theorem2017-08-18-1} can be improved to the best result available for the Gaussian case.

\begin{theorem} \label{theorem2017-08-18-2}
 Denote by $v$ the solution of (\ref{eq2017-11-17-1}) with $\sigma \equiv \mbox{Id}$ and  $ b \equiv 0$. Fix $T > 0$. Let $I \subset \, ]0, T]$ and $J \subset \mathbb{R}$ be two fixed non-trivial compact intervals.
 \begin{itemize}
   \item [(a)] There exists $C > 0$ depending on $I$ and $J$ such that for all compact sets $A \subseteq \mathbb{R}^d$,
   \begin{align*}
    \mathrm{P}\{v(I \times J) \cap A \neq \emptyset\} \leq C\mathscr{H}_{d - \frac{2(\alpha + 1)}{\alpha - 1}}(A).
    \end{align*}
    \item [(b)] For all $t \in \, ]0, T]$,  there exists $C > 0$ depending on $J$ such that for all compact sets $A \subseteq \mathbb{R}^d$,
   \begin{align*}
    \mathrm{P}\{v(\{t\} \times J) \cap A \neq \emptyset\} \leq C\mathscr{H}_{d - \frac{2}{\alpha - 1}}(A).
    \end{align*}
    \item [(c)] For all $x \in \mathbb{R}$, there exists $C > 0$ depending on $I$ such that for all compact sets $A \subseteq \mathbb{R}^d$,
   \begin{align*}
    \mathrm{P}\{v(I \times \{x\}) \cap A \neq \emptyset\} \leq C\mathscr{H}_{d - \frac{2\alpha}{\alpha - 1}}(A).
    \end{align*}
 \end{itemize}
\end{theorem}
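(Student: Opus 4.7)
The plan is to realize Theorem \ref{theorem2017-08-18-2} as a direct application of \cite[Theorem 7.6]{Xia09}, which provides Hausdorff-measure upper bounds on hitting probabilities of centered Gaussian random fields whose canonical $L^{2}$ metric is comparable to an anisotropic metric. Since $\sigma \equiv \mbox{Id}$ and $b \equiv 0$, the mild formulation \eqref{eq2017-09-04-5} reduces to the stochastic convolution
\begin{align*}
v_i(t,x) = \int_0^t \int_{\R} G_{\alpha}(t-r, x-w)\, W^i(dr, dw), \qquad i = 1, \dots, d,
\end{align*}
so $v$ is a centered Gaussian random field with values in $\R^d$ whose components are mutually independent and identically distributed.

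First I would verify that the canonical $L^{2}$ metric
\begin{align*}
d_v((t,x);(s,y)) := \bigl(\E\bigl[(v_1(t,x) - v_1(s,y))^{2}\bigr]\bigr)^{1/2}
\end{align*}
is comparable to the fractional parabolic metric $\Delta_\alpha$, uniformly for $(t,x), (s,y) \in I \times J$: that is, there exists $c > 0$ with
\begin{align*}
c^{-1}\, \Delta_\alpha((t,x);(s,y)) \le d_v((t,x);(s,y)) \le c\, \Delta_\alpha((t,x);(s,y)).
\end{align*}
The upper bound is exactly \eqref{eq2017-08-18-1} with $p = 2$. The matching lower bound follows from a Plancherel computation using the spectral density of $G_\alpha$ together with the scaling relation $G_\alpha(t, \cdot) = t^{-1/\alpha} G_\alpha(1, t^{-1/\alpha}\, \cdot)$; the requisite two-sided estimates are standard for the linear stochastic (fractional) heat equation and can be extracted from \cite{BMS95, ChD15, DeD05}. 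Because $I \subset \, ]0, T]$ stays away from $t = 0$, one also gets $\sup_{I \times J} \E[v_1(t,x)^2] < \infty$ together with a strictly positive lower bound on the variance.

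Next, I read $[0,T] \times \R$ as a two-parameter anisotropic space with Hurst exponents $H_1 = \frac{\alpha-1}{2\alpha}$ in the time direction and $H_2 = \frac{\alpha - 1}{2}$ in the space direction, so $\Delta_\alpha((t,x); (s,y)) = |t-s|^{H_1} + |x-y|^{H_2}$. Part (a) follows by applying \cite[Theorem 7.6]{Xia09} to $v$ on $I \times J$, which delivers $\mathrm{P}\{v(I \times J) \cap A \neq \emptyset\} \le C\, \mathscr{H}_{d - Q}(A)$ with anisotropic dimension $Q = 1/H_1 + 1/H_2 = 2(\alpha+1)/(\alpha - 1)$. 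Part (b) is obtained by applying the same theorem to the one-parameter Gaussian field $x \mapsto v(t,x)$ on $J$ with single Hurst index $H_2$ and $Q = 1/H_2 = 2/(\alpha - 1)$, and part (c) to $t \mapsto v(t, x)$ on $I$ with $H_1$ and $Q = 1/H_1 = 2\alpha/(\alpha-1)$. As a sanity check, for $\alpha = 2$ these three exponents are $d - 6$, $d - 2$, $d - 4$, recovering the classical stochastic heat equation case.

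The main (and only real) technical obstacle is the lower bound $d_v \ge c^{-1}\Delta_\alpha$ on the canonical metric; the upper bound is immediate from \eqref{eq2017-08-18-1}, and the Gaussianity, independence, and identical distribution of the components are built into the model. Once the two-sided comparison is in hand, Xiao's theorem applies directly and yields the three sharp Hausdorff-measure upper bounds stated in (a)--(c).
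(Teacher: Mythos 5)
Your proposal follows essentially the same route as the paper: both reduce Theorem \ref{theorem2017-08-18-2} to \cite[Theorem 7.6]{Xia09} after establishing a two-sided comparison between the canonical metric of $v$ and $\Delta_\alpha$, with the same Hurst exponents $H_1 = \frac{\alpha-1}{2\alpha}$, $H_2 = \frac{\alpha-1}{2}$ and anisotropic dimension $Q = 1/H_1 + 1/H_2 = \frac{2(\alpha+1)}{\alpha-1}$. However, you treat the key lower bound $d_v \geq c^{-1}\Delta_\alpha$ as something that ``can be extracted from \cite{BMS95, ChD15, DeD05}''; it cannot, at least not in the form needed, and the paper proves it from scratch as Proposition \ref{prop2017-08-21-1}. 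That proof requires a genuine argument: one splits into the cases $t - s \geq |x-y|^\alpha$ and $t - s \leq |x-y|^\alpha$, uses the semigroup and scaling identities for the first, and for the second uses the Plancherel isometry together with the elementary bound $|1 - re^{i\theta}| \geq \tfrac{1}{2}|1 - e^{i\theta}|$ and a rescaling $\xi = |x-y|\lambda$, then observes the resulting integral is a strictly positive constant uniformly over $I \times J$. You gesture at the Plancherel idea, so the sketch is in the right direction, but deferring the whole lemma to the cited references leaves the argument incomplete. A second, smaller omission: \cite[Theorem 7.6]{Xia09} also requires verifying condition (C2) (smoothness/nondegeneracy of the variance function), which the paper discharges by noting $(t,x) \mapsto \E[v_1(t,x)^2] = c_\alpha t^{(\alpha-1)/\alpha}$ is $C^1$ on $I \times J$; your write-up checks only the uniform two-sided variance bound corresponding to (C1). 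With Proposition \ref{prop2017-08-21-1} actually proved and (C2) verified, the rest of your argument (including the one-parameter reductions for parts (b) and (c)) is correct and matches the paper.
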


Theorems \ref{theorem2017-08-18-1} and \ref{theorem2017-08-18-2} will be  proved in Section \ref{section1111}
.

\section{Elements of Malliavin calculus}

In this section, we introduce, following Nualart \cite{Nua06} (see also \cite{San05}), some elements of Malliavin calculus. Let $W = \{W(h), h \in \mathscr{H}\}$ denote the isonormal Gaussian process (see \cite[Definition 1.1.1]{Nua06}) associated with space-time white noise, where $\mathscr{H}$ is the Hilbert space $L^2([0, T] \times \mathbb{R}, \mathbb{R}^d)$. Let $\mathscr{S}$ denote the class of smooth random variables of the form
\begin{align*}
G = g(W(h_1), \ldots, W(h_n)),
\end{align*}
where $n \geq 1$, $g \in \mathscr{C}_p^{\infty}(\mathbb{R}^n)$, the set of real-valued functions $g$ such that $g$ and all its partial derivatives have at most polynomial growth and $h_i \in \mathscr{H}$. Given $G \in \mathscr{S}$, its derivative is defined to be the $\mathbb{R}^d$-valued stochastic process $DG = (D_{t, x}G = (D^{(1)}_{t, x}G, \ldots, D^{(d)}_{t, x}G), (t, x) \in [0, T] \times \mathbb{R})$ given by
\begin{align*}
D_{t, x}G = \sum_{i = 1}^n \partial_i g (W(h_1), \ldots, W(h_n))h_i(t, x).
\end{align*}
More generally, we can define the derivative $D^kG$ of order $k$ of $G$ by setting
 \begin{align*}
D_{\alpha}^kG = \sum_{i_1, \ldots, i_k = 1}^n \frac{\partial}{\partial x_{i_1}}\cdots \frac{\partial}{\partial x_{i_k}}\, g(W(h_1), \ldots, W(h_n))h_{i_1}(\alpha_1)\otimes \cdots \otimes h_{i_k}(\alpha_k),
\end{align*}
where $\alpha = (\alpha_1, \ldots, \alpha_k)$, $\alpha_i = (t_i, x_i), 1 \leq i \leq k$ and the notation $\otimes$ denotes the tensor product of functions.

For $p, k \geq 1$, the space $\mathbb{D}^{k, p}$ is the closure of $\mathscr{S}$ with respect to the seminorm $\|\cdot\|_{k, p}$ defined by
\begin{align*}
\|G\|_{k, p}^p = \mbox{E}[|G|^p] + \sum_{j = 1}^k\mbox{E}\left[\|D^jG\|_{\mathscr{H}^{\otimes j}}^p\right],
\end{align*}
where
\begin{align*}
\|D^jG\|_{\mathscr{H}^{\otimes j}}^2 = \sum_{i_1, \ldots, i_j = 1}^d \int_0^Tdt_1\int_{\mathbb{R}}dx_1 \cdots \int_0^Tdt_j\int_{\mathbb{R}}dx_j\left(D^{(i_1)}_{t_1, x_1} \cdots D^{(i_j)}_{t_j, x_j}G\right)^2.
\end{align*}
We set $\mathbb{D}^{\infty} = \cap_{p \geq 1}\cap_{k \geq 1}\mathbb{D}^{k, p}$.

The derivative operator $D$ on $L^2(\Omega)$ has an adjoint, termed the Skorohod integral and denoted by $\delta$, which is an unbounded and closed operator on $L^2(\Omega, \mathscr{H})$; see \cite[Section 1.3]{Nua06}. Its domain, denoted by  $\mbox{Dom}\ \delta$,  is the set of elements $u \in L^2(\Omega, \mathscr{H})$ such that there exists a constant $c$ such that $|\mbox{E}[\langle DG, u \rangle_{\mathscr{H}}]| \leq c \|G\|_{0, 2}$, for any $G \in \mathbb{D}^{1, 2}$. If $u \in \mbox{Dom}\ \delta$, then $\delta(u)$ is the element of $L^2(\Omega)$ characterized by the following duality relation:
\begin{align*}
\mbox{E}[G\, \delta(u)] = \mbox{E}\left[\sum_{j = 1}^d \int_0^T\int_\mathbb{R} D_{t, x}^{(j)}G \, \, u_j(t, x)dtdx\right], \quad \mbox{for all }\ G \in \mathbb{D}^{1, 2}.
\end{align*}

A first application of Malliavin calculus is the following global criterion for existence and smoothness of densities of probability laws.

\begin{theorem}[{{\cite[Proposition 2.1.5]{Nua06} or \cite[Theorem 5.2]{San05}}}]\label{th2017-08-24-1}
Let $F = (F^1, \ldots, F^d)$ be an $\mathbb{R}^d$-valued random vector satisfying the following two conditions:
\begin{enumerate}
  \item [(i)] $F \in (\mathbb{D}^{\infty})^d$;
  \item [(ii)] The Malliavin matrix of $F$ defined by $\gamma_F = (\langle DF^i, DF^j \rangle_{\mathscr{H}})_{1 \leq i, j \leq d}$ is invertible a.s. and $(\det \gamma_F)^{-1} \in L^p(\Omega)$ for all $p \geq 1$.
\end{enumerate}
Then the probability law of $F$ has an infinitely differentiable density function.
\end{theorem}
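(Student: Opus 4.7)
The plan is to prove this classical criterion via iterated integration by parts on Wiener space, using the invertibility of the Malliavin matrix to convert derivatives of test functions into expectations of random variables with controlled $L^p$ norms.

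First, I would set up the basic identity. For $\varphi \in C_b^\infty(\R^d)$, the chain rule in Malliavin calculus gives $D(\varphi(F)) = \sum_{i=1}^d \partial_i \varphi(F)\, DF^i$; pairing in $\mathscr{H}$ with $DF^j$ yields
$$\langle D(\varphi(F)), DF^j \rangle_{\mathscr{H}} = \sum_{i=1}^d \partial_i \varphi(F)\, (\gamma_F)_{ij}.$$
Under hypothesis (ii) the matrix $\gamma_F$ is a.s. invertible, so I can solve for $\partial_i \varphi(F) = \sum_{j=1}^d (\gamma_F^{-1})_{ij}\,\langle D(\varphi(F)), DF^j\rangle_{\mathscr{H}}$.

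Next, multiplying by $G \in \mathbb{D}^\infty$, taking expectations, and using the duality between $D$ and the Skorohod integral $\delta$ produces the integration-by-parts formula
$$\E[\partial_i \varphi(F)\, G] = \E[\varphi(F)\, H_i(F, G)], \qquad H_i(F, G) := \sum_{j=1}^d \delta\bigl(G\, (\gamma_F^{-1})_{ij}\, DF^j\bigr).$$
The main obstacle here is to show $H_i(F, G) \in \bigcap_{p \geq 1} L^p(\Omega)$. This requires first establishing $(\gamma_F^{-1})_{ij} \in \mathbb{D}^\infty$, which is where the full strength of condition (ii) is used: $\gamma_F \in \mathbb{D}^\infty$ is inherited from (i), and combined with the integrability $(\det \gamma_F)^{-1} \in \bigcap_p L^p$ from (ii), Cramer's rule $\gamma_F^{-1} = (\det \gamma_F)^{-1}\,\mathrm{adj}(\gamma_F)$ together with a standard truncation argument (replacing $\det \gamma_F$ by $\det \gamma_F + \varepsilon$, differentiating, and letting $\varepsilon \downarrow 0$) gives the claim. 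Continuity of $\delta$ from $\mathbb{D}^{k+1,p}(\mathscr{H})$ to $\mathbb{D}^{k,p}$ combined with H\"older's inequality then controls every $L^p$-norm of $H_i(F, G)$.

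Iterating, I inductively obtain for every multi-index $\alpha$ a random variable $H_\alpha(F, G) \in \bigcap_p L^p(\Omega)$ satisfying $\E[\partial^\alpha \varphi(F)\, G] = \E[\varphi(F)\, H_\alpha(F, G)]$. Specializing to $G \equiv 1$ and $\varphi_\xi(x) = e^{i\xi \cdot x}$ yields $|\xi^\alpha\, \widehat{\mu_F}(\xi)| \leq \E[|H_\alpha(F, 1)|]$ for every multi-index $\alpha$, where $\mu_F$ denotes the law of $F$. Since this gives Schwartz-type decay of the characteristic function, Fourier inversion produces a $C^\infty$ density for $\mu_F$, which is precisely the conclusion. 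The bulk of the technical work lies in the second step, where the negative moments of $\det \gamma_F$ must be translated into quantitative $L^p$-control of Skorohod integrals, and where differentiation of the inverse Malliavin matrix must be justified carefully through the $\varepsilon$-regularization.
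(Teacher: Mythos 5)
This statement is cited in the paper (to Nualart's \emph{The Malliavin Calculus and Related Topics}, Proposition 2.1.5, and Sanz-Sol\'e's lecture notes, Theorem 5.2) rather than proved, so there is no in-paper argument to compare against. Your proof is correct and is one of the two standard textbook routes. It is worth noting, however, that the cited sources do not go through Fourier inversion: after establishing $(\gamma_F^{-1})_{ij} \in \mathbb{D}^\infty$ and the iterated integration-by-parts formula $\mathrm{E}[\partial_\alpha \varphi(F)\,G] = \mathrm{E}[\varphi(F)\,H_\alpha(F,G)]$ exactly as you do, they instead substitute smooth approximations of the indicator $\mathbf{1}_{\{y^i \le z^i,\ i=1,\dots,d\}}$ and integrate by parts $d$ times to obtain the explicit representation $p_F(z) = \mathrm{E}\bigl[\mathbf{1}_{\{F^i > z^i,\ i=1,\dots,d\}}\, H_{(1,\ldots,d)}(F,1)\bigr]$ (this is precisely the Corollary the paper records as Corollary \ref{cor2017-08-24-1}), and then differentiate this representation to get smoothness of $p_F$. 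The representation route has the advantage of producing a usable formula for the density and its derivatives, which the paper exploits heavily (e.g. in \eqref{eq2017-09-21-5}); the Fourier route you give is slightly shorter if only qualitative smoothness is wanted. The heart of both arguments --- the $\mathbb{D}^\infty$-regularity of $\gamma_F^{-1}$ via the $\varepsilon$-regularization of $(\det\gamma_F)^{-1}$, the $L^p$-control of $H_\alpha$ via continuity of $\delta$, and the induction over multi-indices --- is the same, and you have identified all of these correctly.
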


A random vector $F$ that satisfies conditions (i) and (ii) of Theorem \ref{th2017-08-24-1} is said to be {\em nondegenerate}. For a nondegenerate random vector, the following integration by parts formula plays a key role.

\begin{prop}[{{\cite[Proposition 3.2.1]{Nua98} or \cite[Propostion 5.4]{San05}}}] \label{prop2017-08-24-1}
Let $F = (F^1, \ldots, F^d) \in (\mathbb{D}^{\infty})^d$ be a nondegenerate random vector, let $G \in \mathbb{D}^{\infty}$ and $g \in \mathscr{C}_p^{\infty}(\mathbb{R}^d)$. Fix $k \geq 1$. For any multi-index $\alpha = (\alpha_1, \ldots, \alpha_k) \in \{1, \ldots, d\}^k$, there is an element $H_{\alpha}(F, G) \in \mathbb{D}^{\infty}$ such that
\begin{align*}
\mbox{E}[(\partial_{\alpha}g(F)G)] = \mbox{E}[g(F)H_{\alpha}(F, G)].
\end{align*}
In fact, the random variables $H_{\alpha}(F, G)$ are recursively given by
\begin{align*}
H_{\alpha}(F, G) &= H_{(\alpha_k)}(F, H_{(\alpha_1, \ldots, \alpha_{k -1})}(F, G)) \quad \mbox{and} \quad
H_{(i)}(F, G) = \sum_{i = 1}^d \delta(G(\gamma_F^{-1})_{i, j}DF^j).
\end{align*}
\end{prop}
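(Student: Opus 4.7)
The plan is to reduce to the case $k = 1$ first and then iterate. For $k = 1$ and a single index $i \in \{1, \ldots, d\}$, the starting point is the chain rule for the Malliavin derivative, which gives
\begin{equation*}
D(g(F)) = \sum_{\ell = 1}^d \partial_\ell g(F)\, DF^\ell \quad \text{in } \mathscr{H}.
\end{equation*}
Taking the $\mathscr{H}$-inner product with $DF^j$ produces $\langle D(g(F)), DF^j\rangle_\mathscr{H} = \sum_\ell \partial_\ell g(F)\, (\gamma_F)_{\ell, j}$, so that inverting the Malliavin matrix (which is a.s.\ invertible by nondegeneracy) gives
\begin{equation*}
\partial_i g(F) = \sum_{j = 1}^d (\gamma_F^{-1})_{i, j}\, \langle D(g(F)), DF^j\rangle_\mathscr{H}.
\end{equation*}
Multiplying by $G$, taking expectation and applying the $D$--$\delta$ duality to $u_j := G\,(\gamma_F^{-1})_{i, j}\, DF^j$ then yields the $k = 1$ identity with the stated $H_{(i)}(F, G) = \sum_{j} \delta(G\,(\gamma_F^{-1})_{i, j}\, DF^j)$.

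For this formula to make sense one must verify that $u_j \in \mathrm{Dom}\,\delta$; it will suffice (and will be needed below for iteration) to show that $(\gamma_F^{-1})_{i, j} \in \mathbb{D}^\infty$ and hence, since $\mathbb{D}^\infty$ is an algebra and $DF^j \in \mathbb{D}^\infty(\mathscr{H})$, that $u_j \in \mathbb{D}^{1, 2}(\mathscr{H}) \subset \mathrm{Dom}\,\delta$ and $H_{(i)}(F, G) \in \mathbb{D}^\infty$. To obtain $(\gamma_F^{-1})_{i, j} \in \mathbb{D}^\infty$, I would apply Cramer's rule to express this entry as a polynomial in the components of $\gamma_F$ (which lie in $\mathbb{D}^\infty$ since $F \in (\mathbb{D}^\infty)^d$) divided by $\det \gamma_F$, and then approximate by the smooth surrogate $(\det \gamma_F + \varepsilon)^{-1}$, passing to the limit in every seminorm $\|\cdot\|_{k, p}$ using condition (ii) of Theorem \ref{th2017-08-24-1} (negative moments of $\det \gamma_F$ of all orders).

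For the general $k \geq 2$ case, I would iterate the $k = 1$ formula: applying it to $\widetilde g := \partial_{\alpha_2} \cdots \partial_{\alpha_k} g \in \mathscr{C}_p^\infty(\mathbb{R}^d)$ with weight $G$ strips off one derivative at the cost of replacing $G$ by $H_{(\alpha_1)}(F, G) \in \mathbb{D}^\infty$. Repeating $k - 1$ times with the weight updated to the previously produced Malliavin weight gives the recursion displayed in the statement (up to the inessential relabeling matching the order $H_\alpha(F, G) = H_{(\alpha_k)}(F, H_{(\alpha_1, \ldots, \alpha_{k-1})}(F, G))$).

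The main obstacle is the step showing $(\gamma_F^{-1})_{i, j} \in \mathbb{D}^\infty$: this is the only point where the full strength of condition (ii) is used, and it is precisely what guarantees that the weight $H_{(i)}(F, G)$ itself lies in $\mathbb{D}^\infty$, so that the integration-by-parts identity can be iterated $k$ times without regularity being lost at any step.
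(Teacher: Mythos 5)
Your proof is correct and is essentially the standard argument found in the cited references (Nualart's Proposition 3.2.1, Sanz-Sol\'e's Proposition 5.4); the paper itself does not supply a proof of this proposition but appeals to those sources. The chain-of-steps you give — chain rule, contraction against $DF^j$ to form $\gamma_F$, inversion, $D$--$\delta$ duality, and then verifying $(\gamma_F^{-1})_{i,j}\in\mathbb{D}^\infty$ via Cramer's rule plus regularization of $(\det\gamma_F)^{-1}$ using the negative moment bound, followed by iteration to get the stated recursion $H_\alpha(F,G)=H_{(\alpha_k)}(F,H_{(\alpha_1,\ldots,\alpha_{k-1})}(F,G))$ — is precisely the argument in the source material, and your accounting of where nondegeneracy (condition (ii) of Theorem~\ref{th2017-08-24-1}) is used is accurate. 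As a minor aside, you correctly sum over $j$ in $H_{(i)}(F,G)=\sum_{j}\delta(G(\gamma_F^{-1})_{i,j}DF^j)$, which fixes a typo in the statement (whose displayed formula erroneously sums over $i$).
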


Proposition \ref{prop2017-08-24-1} with $G = 1$ and $\alpha = (1, \ldots, d)$ implies the following expression for the density of a nondegenerate random vector.
\begin{corollary}[{{\cite[Corollary 3.2.1]{Nua98}}}]\label{cor2017-08-24-1}
Let $F = (F^1, \ldots, F^d) \in (\mathbb{D}^{\infty})^d$ be a nondegenerate random vector and let $p_F(z)$ denote the density of $F$. Then for every subset $\sigma$ of the set of indices $\{1, \ldots, d\}$,
\begin{align*}
p_F(z) = (-1)^{d - |\sigma|}\mbox{E}[1_{\{F^i > z^i, i \in \sigma, F^i < z^i, i \not\in \sigma\}}H_{(1, \ldots, d)}(F, 1)],
\end{align*}
where $|\sigma|$ is the cardinality of $\sigma$, and, in agreement with Proposition \ref{prop2017-08-24-1},
\begin{align*}
H_{(1, \ldots, d)}(F, 1) = \delta((\gamma_F^{-1}DF)^d\delta((\gamma_F^{-1}DF)^{d - 1}\delta(\cdots \delta((\gamma_F^{-1}DF)^1) \cdots))).
\end{align*}
\end{corollary}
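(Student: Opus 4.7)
The corollary is obtained by applying Proposition \ref{prop2017-08-24-1} with $G = 1$, $k = d$, and multi-index $\alpha = (1, \ldots, d)$, and then extending the resulting integration-by-parts identity from $\mathscr{C}_p^\infty$ test functions to the signed product of indicators appearing in the statement by a mollification argument. The plan is to build smooth test functions $g_\epsilon$ whose $d$-fold mixed partial $\partial_1 \cdots \partial_d g_\epsilon$ concentrates, up to the correct sign $(-1)^{d-|\sigma|}$, into the Dirac mass at $z$; to apply the integration-by-parts formula to each $g_\epsilon$; and to let $\epsilon \downarrow 0$ on both sides.

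Fix $z \in \mathbb{R}^d$ and $\sigma \subseteq \{1, \ldots, d\}$. Let $\psi_\epsilon \in C^\infty(\mathbb{R})$ satisfy $0 \le \psi_\epsilon \le 1$, $\psi_\epsilon(u) \to \1_{\{u > 0\}}$ pointwise for $u \neq 0$, and let $\rho_\epsilon := \psi_\epsilon'$ be a smooth approximate identity supported in $[-\epsilon, \epsilon]$. Set
$$g_\epsilon(x) = \prod_{i \in \sigma} \psi_\epsilon(x^i - z^i) \prod_{i \notin \sigma} \psi_\epsilon(z^i - x^i),$$
so that $g_\epsilon \in \mathscr{C}_p^\infty(\mathbb{R}^d)$, $|g_\epsilon| \le 1$, and $g_\epsilon(x)$ converges to $\1_{\{x^i > z^i,\, i \in \sigma;\, x^i < z^i,\, i \notin \sigma\}}$ off the $p_F$-null set $\bigcup_i \{x^i = z^i\}$. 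A direct differentiation, coupled with the signs picked up from the factors $\psi_\epsilon(z^i - x^i)$ for $i \notin \sigma$, gives
$$\partial_1 \cdots \partial_d g_\epsilon(x) = (-1)^{d - |\sigma|} \prod_{i \in \sigma} \rho_\epsilon(x^i - z^i) \prod_{i \notin \sigma} \rho_\epsilon(z^i - x^i),$$
which is $(-1)^{d-|\sigma|}$ times a tensor-product approximate identity at $z$ (its integral over $\mathbb{R}^d$ equals $(-1)^{d-|\sigma|}$ and its support shrinks to $\{z\}$).

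Applying Proposition \ref{prop2017-08-24-1} to $g_\epsilon$ yields $\mbox{E}[\partial_1 \cdots \partial_d g_\epsilon(F)] = \mbox{E}[g_\epsilon(F)\, H_{(1, \ldots, d)}(F, 1)]$. Rewriting the left-hand side as $\int \partial_1 \cdots \partial_d g_\epsilon(x)\, p_F(x)\, dx$ and invoking the continuity of $p_F$ (which is in fact $C^\infty$ by Theorem \ref{th2017-08-24-1}), the approximate-identity property forces this integral to converge to $(-1)^{d - |\sigma|} p_F(z)$ as $\epsilon \downarrow 0$. For the right-hand side, iterating the recursion defining $H_{(1,\ldots,d)}(F,1)$ together with the continuity of the Skorohod operator $\delta$ on the Sobolev spaces $\mathbb{D}^{k,p}$ and the fact that nondegeneracy places every entry of $\gamma_F^{-1}$ in $\bigcap_{p\ge 1} L^p(\Omega)$ shows that $H_{(1,\ldots,d)}(F,1) \in \mathbb{D}^\infty \subset L^1(\Omega)$. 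Since $|g_\epsilon(F)| \le 1$, dominated convergence gives
$$\mbox{E}\bigl[g_\epsilon(F)\, H_{(1, \ldots, d)}(F, 1)\bigr] \longrightarrow \mbox{E}\bigl[\1_{\{F^i > z^i,\, i \in \sigma;\, F^i < z^i,\, i \notin \sigma\}}\, H_{(1, \ldots, d)}(F, 1)\bigr],$$
and equating the two limits produces the claimed identity.

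The only delicate point is securing $H_{(1,\ldots,d)}(F,1) \in L^1(\Omega)$ so that dominated convergence applies; this is exactly what the nondegeneracy hypothesis, condition (ii) of Theorem \ref{th2017-08-24-1}, guarantees via the recursion of Proposition \ref{prop2017-08-24-1} and standard Meyer-type Sobolev estimates on $\delta$. The rest is a routine mollification and an application of continuity of $p_F$.
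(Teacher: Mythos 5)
Your proof is correct and is essentially the standard argument: apply Proposition \ref{prop2017-08-24-1} with $G=1$ and $\alpha=(1,\ldots,d)$ to a mollified version of the signed indicator, then let the mollification parameter tend to zero using continuity of $p_F$ on the left and dominated convergence (with $H_{(1,\ldots,d)}(F,1)\in L^1$ coming from nondegeneracy) on the right. The paper does not reprove the corollary — it simply records that it follows from the proposition with those choices and cites \cite[Corollary 3.2.1]{Nua98} — and your mollification argument is exactly the proof given in that reference.
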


The next result gives a criterion for uniform boundedness of the density of a nondegenerate random vector.
\begin{prop}[{{\cite[Proposition 3.4]{DKN09}}}] \label{prop2017-08-24-2}
For all $p > 1$ and $l \geq 1$, fix $c_1(p) > 0$ and $c_2(l, p) \geq 0$. Let $F \in (\mathbb{D}^{\infty})^d$ be a nondegenerate random vector such that for all $p > 1$,
\begin{enumerate}
  \item [(a)] $\mbox{E}[(\det \gamma_F)^{-p}] \leq c_1(p)$;
  \item [(b)] $\mbox{E}\left[\|D^l(F^i)\|_{\mathscr{H}^{\otimes l}}^p\right] \leq c_2(l, p), \quad i = 1, \ldots, d$, \mbox{for all} $l \geq 1$.
\end{enumerate}
Then the density of $F$ is uniformly bounded, and the bound does not depend on $F$ but only on the constants $c_1(p)$ and $c_2(l, p)$.
\end{prop}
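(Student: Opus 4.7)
The natural starting point is Corollary \ref{cor2017-08-24-1}, applied with any fixed choice of $\sigma \subseteq \{1,\ldots,d\}$ (say $\sigma = \emptyset$): this gives
\begin{align*}
p_F(z) = (-1)^d\, \mathrm{E}\bigl[\mathbbm{1}_{\{F^i < z^i,\ 1 \leq i \leq d\}}\, H_{(1,\ldots,d)}(F,1)\bigr].
\end{align*}
Bounding the indicator by $1$ and applying Cauchy--Schwarz yields the pointwise estimate
\begin{align*}
p_F(z) \leq \mathrm{E}\bigl[|H_{(1,\ldots,d)}(F,1)|\bigr] \leq \bigl\|H_{(1,\ldots,d)}(F,1)\bigr\|_{L^2(\Omega)},
\end{align*}
uniformly in $z \in \mathbb{R}^d$. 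Thus the whole task reduces to showing that $\|H_{(1,\ldots,d)}(F,1)\|_{L^2(\Omega)}$ can be controlled by a constant depending only on $c_1(\cdot)$ and $c_2(\cdot,\cdot)$.

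For this, I would proceed by backward induction on the nesting in the formula from Corollary \ref{cor2017-08-24-1},
\begin{align*}
H_{(1,\ldots,d)}(F,1) = \delta\bigl((\gamma_F^{-1} DF)^d\, \delta((\gamma_F^{-1} DF)^{d-1}\, \delta(\cdots \delta((\gamma_F^{-1} DF)^1)\cdots))\bigr),
\end{align*}
applying Meyer's inequality for the Skorohod integral at each layer: for every $k \geq 0$ and $p > 1$, there exists $C_{k,p}$ with $\|\delta(u)\|_{k,p} \leq C_{k,p}\|u\|_{\mathbb{D}^{k+1,p}(\mathscr{H})}$. After $d$ iterations, this bounds $\|H_{(1,\ldots,d)}(F,1)\|_{L^2}$ by a finite sum of products of $\mathbb{D}^{k,q}$-seminorms (for $k \leq d$ and various $q < \infty$) of the $\mathscr{H}$-valued vectors $(\gamma_F^{-1} DF)^i$, $1 \leq i \leq d$.

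Each of these seminorms is then handled by expanding the Malliavin derivatives of $\gamma_F^{-1}$ via the matrix identity
\begin{align*}
D\bigl(\gamma_F^{-1}\bigr) = -\gamma_F^{-1}(D\gamma_F)\gamma_F^{-1},
\end{align*}
iterated, which produces finitely many terms, each a product of several copies of $\gamma_F^{-1}$ with Malliavin derivatives $D^l F^i$ of orders $l \leq d$. Writing $\gamma_F^{-1} = (\det \gamma_F)^{-1}\operatorname{adj}(\gamma_F)$ via Cramer's rule, the entries of the adjugate are polynomials in $\langle DF^i, DF^j\rangle_{\mathscr{H}}$, so their $L^q$ moments are controlled by the $L^{q'}$ moments of the $\|DF^i\|_{\mathscr{H}}$, i.e., by $c_2(1,q')$; meanwhile $(\det \gamma_F)^{-q}$ has expectation at most $c_1(q)$. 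An application of generalized Hölder's inequality, distributing the $L^q$ norms across the finitely many factors, gives a bound depending only on finitely many of the $c_1(p)$'s and $c_2(l,p)$'s, as desired.

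The main technical obstacle is bookkeeping: the repeated differentiation of $\gamma_F^{-1}$ produces a combinatorial explosion of terms involving arbitrarily high products of $\gamma_F^{-1}$ together with higher Malliavin derivatives of $F$, and one must verify that at every stage Meyer's inequality applies, that all relevant quantities lie in the appropriate $\mathbb{D}^{k,p}$ spaces (so in particular that $\gamma_F^{-1} \in \mathbb{D}^\infty$, which follows from (a), (b) and $F \in (\mathbb{D}^\infty)^d$), and that the final Hölder splitting uses only exponents for which (a) and (b) provide control. Once this bookkeeping is in place, the resulting constant depends solely on $d$, on the constants $C_{k,p}$ from Meyer's inequality, and on the $c_1(p)$, $c_2(l,p)$ from the hypotheses, yielding the claimed uniform bound on the density that is independent of the specific vector $F$.
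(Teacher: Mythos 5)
The paper states this proposition without proof, citing \cite[Proposition 3.4]{DKN09} directly, so there is no in-paper argument to compare against. Your proposal reproduces the standard route taken there: express $p_F(z)$ via the weight $H_{(1,\ldots,d)}(F,1)$ from Corollary~\ref{cor2017-08-24-1}, bound $p_F(z) \le \|H_{(1,\ldots,d)}(F,1)\|_{L^2(\Omega)}$, and control that norm by iterating Meyer's inequality for $\delta$ together with the identity $D(\gamma_F^{-1}) = -\gamma_F^{-1}(D\gamma_F)\gamma_F^{-1}$, Cramer's rule $\gamma_F^{-1} = (\det\gamma_F)^{-1}\operatorname{adj}(\gamma_F)$, and H\"older's inequality so that each factor is controlled by hypothesis (a) or (b) --- this is correct, and since the exponents produced by the finitely many applications of Meyer and H\"older depend only on $d$, the resulting bound indeed depends only on finitely many of the $c_1(p)$'s and $c_2(l,p)$'s, as required.
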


\section{Existence, smoothness and uniform boundedness of the one-point density} \label{section2.3}

In \cite{BEM10}, the Malliavin differentiability and smoothness of the density of the solution to fractional SPDEs driven by spatially correlated noise was established when $d = 1$. These can also be applied to SPDEs driven by space-time white noise and the extension to $d > 1$ under \textbf{P1'} and \textbf{P2} can easily be done by working coordinate by coordinate. In particular, for any $(t, x) \in [0, T] \times \mathbb{R}$, $i, k \in \{1, \ldots, d\}$, the derivative of $u_i(t, x)$ satisfies the system of equations
\begin{align}\label{eq2017-09-05-3}
D_{r, v}^{(k)}(u_i(t, x)) = G_{\alpha}(t - r, x - v)\sigma_{ik}(u(r, v)) + a_i(k, r, v, t, x),
\end{align}
where
\begin{align}\label{eq2017-09-05-4001122}
a_i(k, r, v, t, x) &= \sum_{j = 1}^d\int_{r}^t\int_{\mathbb{R}}G_{\alpha}(t - \theta, x - \eta)D_{r, v}^{(k)}(\sigma_{ij}(u(\theta, \eta)))W^j(d\theta, d\eta) \nonumber \\
& \quad + \int_{r}^t\int_{\mathbb{R}}G_{\alpha}(t - \theta, x - \eta)D_{r, v}^{(k)}(b_i(u(\theta, \eta)))d\theta d\eta,
\end{align}
if $r < t$ and $D_{r, v}^{(k)}(u_i(t, x)) = 0$ when $r > t$. By iterating the calculation which leads to (\ref{eq2017-09-05-3}), we see that $D^mu_i(t, x)$ also satisfies the system of stochastic partial differential equations which are analogous to the equations in Proposition 4.1 of \cite{DKN07}; see also \cite[(6.29)]{NuQ07}. Moreover, for any $p > 1, m \geq 1$ and $i \in \{1, \ldots, d\}$, the order $m$ derivatives satisfies
\begin{align}\label{eq2017-09-05-4}
\sup_{(t, x) \in [0, T] \times \mathbb{R}}\mbox{E}\left[\left\|D^m(u_i(t, x))\right\|_{\mathscr{H}^{\otimes m}}^p\right] < \infty.
\end{align}
Furthermore, for all $(t, x) \in [0, T] \times \mathbb{R}$,
\begin{align}\label{eq2017-09-05-5}
u(t, x) \in (\mathbb{D}^{\infty})^d.
\end{align}

Our objective in this section is to prove Theorem \ref{theorem2017-08-17-1}(a) by using Proposition \ref{prop2017-08-24-2}. The next result proves property (a) in Proposition \ref{prop2017-08-24-2} when $F$ is replaced by $u(t, x)$.
\begin{prop}\label{prop2017-09-05-1}
Fix $T > 0$ and assume hypotheses \textbf{P1'} and \textbf{P2}. Then, for any $p \geq 1$,
\begin{align*}
\mbox{E}\left[(\det \gamma_{u(t, x)})^{-p}\right]
\end{align*}
is uniformly bounded over $(t, x)$ in any closed non-trivial rectangle $I \times J \subset \, ]0, T] \times \mathbb{R}$.
\end{prop}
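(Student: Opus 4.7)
The strategy is the classical one: bound $(\det \gamma_{u(t,x)})^{-1}$ in terms of the smallest eigenvalue
\[
\lambda_{\min}(\gamma_{u(t,x)}) \;=\; \inf_{\|\xi\|=1} \xi^T \gamma_{u(t,x)} \xi
\]
via $\det \gamma_{u(t,x)} \ge \lambda_{\min}(\gamma_{u(t,x)})^d$, and reduce matters to the pointwise-in-$\xi$ tail estimate
\begin{equation*}
\mathrm{P}\bigl(\xi^T \gamma_{u(t,x)} \xi \le \epsilon\bigr) \le C_q \, \epsilon^q, \qquad \epsilon \in (0,1],\ \|\xi\|=1,\ q \ge 1,
\end{equation*}
with $C_q$ independent of $(t,x) \in I \times J$ and $\xi$. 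An $\eta$-net argument on the unit sphere of $\mathbb{R}^d$, combined with the moment bound (\ref{eq2017-09-05-4}) on $\|D u_i(t,x)\|_{\mathscr{H}}$ (which controls the Lipschitz constant of $\xi \mapsto \xi^T \gamma_{u(t,x)} \xi$), then lifts this into a tail bound for $\lambda_{\min}(\gamma_{u(t,x)})$, and integrating in $\epsilon$ yields $\mathrm{E}[\lambda_{\min}(\gamma_{u(t,x)})^{-p}] \le C_p < \infty$.

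To prove the pointwise estimate, I restrict the $\mathscr{H}$-inner product defining $\gamma_{u(t,x)}$ to a short time window $[t-\delta, t]$ with $\delta \in (0, t/2]$ to be chosen, substitute the representation (\ref{eq2017-09-05-3}), and apply $(a+b)^2 \ge \tfrac12 a^2 - b^2$ twice---once to peel off the correction $a_i$, once to freeze the diffusion coefficient at $(t,x)$ via $\sigma_{ik}(u(r,v)) = \sigma_{ik}(u(t,x)) + [\sigma_{ik}(u(r,v)) - \sigma_{ik}(u(t,x))]$. This produces
\begin{equation*}
\xi^T \gamma_{u(t,x)} \xi \;\ge\; \tfrac14\, \|\sigma^T(u(t,x))\xi\|^2 \int_{t-\delta}^t\!\!\int_\mathbb{R} G_\alpha(t-r,x-v)^2\, dr\, dv \;-\; C\bigl[R_1(\delta) + R_2(\delta)\bigr],
\end{equation*}
where Plancherel and the self-similarity of $G_\alpha$ evaluate the integral to $C_\alpha \delta^{(\alpha-1)/\alpha}$, hypothesis \textbf{P2} furnishes the deterministic lower bound $c\rho^2 \delta^{(\alpha-1)/\alpha}$ for the leading term, and
\begin{equation*}
R_1(\delta) = \sum_k \int_{t-\delta}^t\!\!\int_\mathbb{R} G_\alpha(t-r,x-v)^2\, \|\sigma(u(r,v)) - \sigma(u(t,x))\|^2\, dr\, dv, \quad R_2(\delta) = \sum_{i,k} \|a_i(k,\cdot,\cdot,t,x)\|_{L^2([t-\delta,t]\times \mathbb{R})}^2
\end{equation*}
collect all the random remainders.

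Moments of $R_1(\delta)$ are controlled via the Lipschitz property of $\sigma$, H\"{o}lder's inequality, and the H\"{o}lder regularity (\ref{eq2017-08-18-1}), giving $\mathrm{E}[R_1(\delta)^p] \le C_p\, \delta^{p(\alpha-1)/\alpha + \kappa_p}$ with $\kappa_p > 0$. Moments of $R_2(\delta)$ require expanding (\ref{eq2017-09-05-4001122}), applying Burkholder-Davis-Gundy to the stochastic integral and H\"{o}lder's inequality to the pathwise integral, and using the chain rule $D(\sigma_{ij}(u)) = \sum_l \partial_l \sigma_{ij}(u)\, D u^l$ together with the uniform moment bound (\ref{eq2017-09-05-4}) on Malliavin derivatives of $u$; the result is $\mathrm{E}[R_2(\delta)^p] \le C_p\, \delta^{p(\alpha-1)/\alpha + \kappa}$ for some $\kappa > 0$. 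Choosing $\delta = c\, \epsilon^{\alpha/(\alpha-1)}$ and applying Chebyshev then gives the desired tail bound with arbitrarily large exponent in $\epsilon$. The main technical obstacle is the estimate on $R_2(\delta)$, whose naive size on the window $[t-\delta, t]$ is of the same order $\delta^{(\alpha-1)/\alpha}$ as the leading term; the extra power of $\delta$ is gained because $D_{r,v}^{(k)}(\sigma_{ij}(u(\theta,\eta))) = 0$ for $\theta < r$, which restricts the inner integration in (\ref{eq2017-09-05-4001122}) to the triangle $r < \theta < t \subset [t-\delta, t]$ and yields the additional scaling factor. This is the fractional analogue of the mechanism in \cite[Proposition 4.3]{DKN07} and \cite[Proposition 6.9]{DKN09}. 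Uniformity of all constants over $(t,x) \in I \times J$ is automatic, since every estimate depends only on $T$, $\rho$, and bounds on $\sigma, b$ and their derivatives.
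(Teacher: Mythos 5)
Your overall strategy mirrors the paper's: localize to a short time window, produce a deterministic main term of order $\delta^{(\alpha-1)/\alpha}$, control the moments of the remainder with an extra power of $\delta$, and close via a Chebyshev-type criterion (the paper invokes \cite[Proposition~3.5]{DKN09}, which formalizes exactly the argument you redo by hand). The $\eta$-net step is also not needed, since the remainder $I_2$ in the paper is bounded uniformly in $\xi$ directly by Cauchy--Schwarz; the paper simply estimates $\mathrm{E}[\sup_{\|\xi\|=1}|I_2|^q]$, which is cleaner. These are harmless reorganizations.

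The substantive difference---and the place your proposal is risky---is the freezing of $\sigma$ at $(t,x)$. This step is unnecessary here and it introduces an error term $R_1(\delta)$ with a genuine difficulty for $1<\alpha<2$. Under \textbf{P2} you have $\sum_k\bigl(\sum_i \sigma_{ik}(u(r,v))\xi_i\bigr)^2 = \|\sigma^T(u(r,v))\xi\|^2 \geq \rho^2$ \emph{pointwise} in $(r,v,\omega)$, so the paper's leading term
\[
I_1 = \int_{t(1-\epsilon)}^t\!\!\int_{\mathbb{R}} G_\alpha^2(t-r,x-v)\,\|\sigma^T(u(r,v))\xi\|^2\,dv\,dr \;\geq\; \rho^2\int_{t(1-\epsilon)}^t\!\!\int_{\mathbb{R}} G_\alpha^2 \;=\; c\rho^2\,\epsilon^{(\alpha-1)/\alpha}
\]
is already deterministically bounded below, with no error from the variability of $\sigma$. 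Your freezing manufactures the extra remainder $R_1$, and for $1<\alpha<2$ the fractional Green kernel has only \emph{polynomial} spatial decay, $G_\alpha(1,w)\sim |w|^{-1-\alpha}$. The consequence is that the naive route you describe---apply H\"older with respect to the measure $G_\alpha^2\,dr\,dv$ over all of $\mathbb{R}$ and then invoke \eqref{eq2017-08-18-1}---produces an integral $\int_{\mathbb{R}} G_\alpha^2(1,w)\,(1+|w|^{(\alpha-1)/2})^{2p}\,dw$ that \emph{diverges} once $p(\alpha-1)>1+2\alpha$, and a fallback using the boundedness of $\sigma$ only yields a $p$-independent extra power of $\delta$, which under Chebyshev gives a tail bound $\mathrm{P}(\lambda_{\min}\leq\epsilon)\lesssim\epsilon^{c(\alpha)}$ with $c(\alpha)$ \emph{fixed}, not arbitrarily large. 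To make the freezing route work you would have to split $R_1$ into near and far spatial regions (say $|x-v|\lessgtr K$) \emph{before} applying H\"older---the far piece is then deterministically $\lesssim K^{-1-2\alpha}\delta^3$ and the near piece is $\lesssim (\delta^{(\alpha-1)/\alpha}K^{\alpha-1})^p$ in $L^p$, and optimizing $K$ does recover $\kappa_p \propto p$---but your write-up does not say this, and for $\alpha=2$ (Gaussian decay) the subtlety does not arise, so it is easy to miss. The paper's direct use of \textbf{P2} sidesteps the whole issue. Your handling of $R_2$, by contrast, matches the paper's treatment of $I_2 = I_{21}+I_{22}$ (Burkholder for the martingale part, chain rule and H\"older for the drift part, combined with \eqref{eq2017-09-05-4} and the scaling identity \eqref{eq2017-09-05-1}) and gives the required extra power $\epsilon^{(\alpha-1)q/\alpha}$; that part is fine.
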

\begin{proof}
The proof follows along the same lines as \cite[Proposition 4.2]{DKN09} by using \cite[Proposition 3.5]{DKN09}; see also \cite[Proposition 4.1]{DKN13}.  The main differences are the exponents appearing in the estimate. Let $(t, x) \in I \times J$ be fixed. We write
\begin{align*}
\det \gamma_{u(t, x)} \geq \left(\inf_{\xi \in \mathbb{R}^d: \|\xi\| = 1} \xi^T \gamma_{u(t, x)}\xi\right)^d.
\end{align*}
Let $\xi \in \mathbb{R}^d$ with $\|\xi\| = 1$ and fix $\epsilon \in \, ]0, 1[$. Using (\ref{eq2017-09-05-3}) and the inequality
\begin{align}\label{eq2017-09-06-2}
(a + b)^2 \geq \frac{2}{3}a^2 - 2b^2,
\end{align}
valid for all $a, b \in \mathbb{R}$, we see that
\begin{align*}
\xi^T \gamma_{u(t, x)}\xi &= \int_0^tdr\int_{\mathbb{R}}dv\left\|\sum_{i = 1}^dD_{r, v}(u_i(t, x))\xi_i\right\|^2\\
&\geq \int_{t(1 - \epsilon)}^tdr\int_{\mathbb{R}}dv\left\|\sum_{i = 1}^dD_{r, v}(u_i(t, x))\xi_i\right\|^2 \geq \frac{2}{3}I_1 - 2I_2,
\end{align*}
where
\begin{align*}
I_1 &= \int_{t(1 - \epsilon)}^tdr\int_{\mathbb{R}}dv\sum_{k = 1}^d\left(\sum_{i = 1}^d G_{\alpha}(t - r, x - v)\sigma_{ik}(u(r, v))\xi_i\right)^2,\\
I_2 &= \int_{t(1 - \epsilon)}^tdr\int_{\mathbb{R}}dv\sum_{k = 1}^d\left(\sum_{i = 1}^d a_i(k, r, v, t, x)\xi_i\right)^2,
\end{align*}
and $a_i(k, r, v, t, x)$ is defined in (\ref{eq2017-09-05-4001122}). By hypothesis \textbf{P2} and semi-group property of the Green kernel \cite[Lemma 4.1(iii)]{ChD15},
\begin{align}\label{eq2017-09-06-3}
I_1 &\geq c \int_{t(1 - \epsilon)}^t\int_{\mathbb{R}}G_{\alpha}^2(t - r, x - v)dvdr = c \int_{t(1 - \epsilon)}^tG_{\alpha}(2(t - r), 0)dr \nonumber \\
  &= \frac{c}{2} \int_{0}^{2t\epsilon}G_{\alpha}(r, 0)dr = c'(2t\epsilon)^{\frac{\alpha - 1}{\alpha}} \geq c''\epsilon^{\frac{\alpha - 1}{\alpha}},
\end{align}
where in the third equality we use the scaling property of the Green kernel \cite[Lemma 4.1(iv)]{ChD15}, and the constants $c$, $c'$ and $c''$ are uniform over $(t, x) \in I \times J$.

Next we apply the Cauchy-Schwarz inequality to find that, for any $q \geq 1$,
\begin{align*}
\mbox{E}\left[\sup_{\xi \in \mathbb{R}^d: \|\xi\| = 1}|I_2|^q\right] \leq c(\mbox{E}[|I_{21}|^q] + \mbox{E}[|I_{22}|^q]),
\end{align*}
where
\begin{align*}
I_{21} &= \sum_{i, j, k = 1}^d \int_{t(1 - \epsilon)}^tdr\int_{\mathbb{R}}dv\left(\int_r^t\int_{\mathbb{R}}G_{\alpha}(t - \theta, x - \eta)D_{r, v}^{(k)}(\sigma_{ij}(u(\theta, \eta)))W^j(d\theta, d\eta)\right)^2, \\
I_{22} &= \sum_{i, k = 1}^d \int_{t(1 - \epsilon)}^tdr\int_{\mathbb{R}}dv\left(\int_r^t\int_{\mathbb{R}}G_{\alpha}(t - \theta, x - \eta)D_{r, v}^{(k)}(b_i(u(\theta, \eta)))d\theta d\eta\right)^2.
\end{align*}
The term $I_{21}$ is bounded in the same way as $A_1$ in \cite[(4.5)]{DKN09}, with $G$ there replaced by our $G_{\alpha}$. Instead of using their Lemmas $7.6$, $7.3$ and $7.5$, we use Lemma \ref{lemma2017-09-05-5}, \eqref{eq2017-09-05-1} below and Lemma \ref{lemma2017-09-06-1}. This leads to
\begin{align*}
\mbox{E}[|I_{21}|^q] &\leq  C_{T}\epsilon^{2(\alpha - 1)q/\alpha},
\end{align*}
where the constant $C_T$ is uniform over $(t, x) \in I \times J$. For details, see \cite[Proof of Prop. 2.3.1]{Pu18}.

We next derive a similar bound for $I_{22}$. First, we use the Cauchy-Schwarz inequality with respect to the measure $G_{\alpha}(t - \theta, x - \eta)d\theta d\eta$ to see that
\begin{align*}
I_{22} &\leq \sum_{i, k = 1}^d \int_{t(1 - \epsilon)}^t(t - r)dr\int_{\mathbb{R}}dv\int_r^t\int_{\mathbb{R}}G_{\alpha}(t - \theta, x - \eta)\left(D_{r, v}^{(k)}(b_i(u(\theta, \eta)))\right)^2d\theta d\eta \\
&\leq \sum_{i, k = 1}^d t\epsilon\int_{t(1 - \epsilon)}^tdr\int_{\mathbb{R}}dv\int_r^t\int_{\mathbb{R}}G_{\alpha}(t - \theta, x - \eta)\left(D_{r, v}^{(k)}(b_i(u(\theta, \eta)))\right)^2d\theta d\eta.
\end{align*}
Since the partial derivatives of $b_i$ are bounded, by Fubini's theorem,
\begin{align*}
\mbox{E}[|I_{22}|^q] &\leq c\sum_{l, k = 1}^d (t\epsilon)^q \mbox{E}\Bigg[\bigg|\int_{t(1 - \epsilon)}^tdr\int_{\mathbb{R}}dv\int_r^t\int_{\mathbb{R}}G_{\alpha}(t - \theta, x - \eta)\left(D_{r, v}^{(k)}(u_l(\theta, \eta))\right)^2d\theta d\eta\bigg|^q\Bigg] \\
&= c\sum_{l, k = 1}^d (t\epsilon)^q \mbox{E}\Bigg[\bigg|\int_{t(1 - \epsilon)}^td\theta \int_{\mathbb{R}}d\eta \, G_{\alpha}(t - \theta, x - \eta)\int_{t(1 - \epsilon)}^{t\wedge \theta}dr\int_{\mathbb{R}}dv\left(D_{r, v}^{(k)}(u_l(\theta, \eta))\right)^2 \bigg|^q\Bigg].
\end{align*}
Applying H\"{o}lder's inequality with respect to the measure $G_{\alpha}(t - \theta, x - \eta)d\theta d\eta$,
\begin{align*}
\mbox{E}[|I_{22}|^q] &\leq c\sum_{l, k = 1}^d (t\epsilon)^q \bigg|\int_{t(1 - \epsilon)}^td\theta \int_{\mathbb{R}}d\eta\, G_{\alpha}(t - \theta, x - \eta)\bigg|^{q - 1}\\
& \quad  \times  \int_{t(1 - \epsilon)}^td\theta \int_{\mathbb{R}}d\eta\, G_{\alpha}(t - \theta, x - \eta)
\mbox{E}\Bigg[\bigg|\int_{t(1 - \epsilon)}^{t \wedge \theta}dr\int_{\mathbb{R}}dv
 \left(D_{r, v}^{(k)}(u_l(\theta, \eta))\right)^2 \bigg|^q\Bigg].
\end{align*}
Using Lemma \ref{lemma2017-09-06-1}, this yields $\mbox{E}[|I_{22}|^q] \leq C_T (t\epsilon)^q(t\epsilon)^q(t\epsilon)^{(\alpha - 1)q/\alpha} = C_T(t\epsilon)^{(3 - 1/\alpha)q}$.

Thus, we have proved that
\begin{align}\label{eq2017-09-07-2}
\mbox{E}\left[\sup_{\xi \in \mathbb{R}^d: \|\xi\| = 1}|I_2|^q\right] \leq C_T\epsilon^{2(\alpha - 1)q/\alpha},
\end{align}
where the constant $C_T$ is clearly uniform over $(t, x) \in I \times J$.

Finally, we apply \cite[Prop. 3.5]{DKN09} with $Z:= \inf_{\|\xi\| = 1}(\xi^T\gamma_{u(t, x)}\xi)$, $Y_{1, \epsilon} = Y_{2, \epsilon} = \sup_{\|\xi\| = 1}I_2$, $\epsilon_0 = 1$, $\alpha_1 = \alpha_2 = (\alpha - 1)/\alpha$ and $\beta_1 = \beta_2 = 2(\alpha - 1)/\alpha$, to get
$
\mbox{E}\left[(\det \gamma_{u(t, x)})^{-p}\right] \leq C_T,
$
where all the constants are independent of $(t, x) \in I \times J$.
\end{proof}

In \cite{BEM10}, the authors established the existence and smoothness of the density of the solution of one single stochastic fractional partial differential equation driven by spatially correlated noise. For a system of $d$ equations driven by space-time white noise, we have the following.

\begin{prop} \label{prop2017-09-07-1}
Assume \textbf{P1'} and  \textbf{P2}. Fix $T > 0$ and let $I$ and $J$ be compact intervals as in Theorem \ref{theorem2017-08-17-1}. Then for any $(t, x) \in \, ]0, T] \times \mathbb{R}$, $u(t, x)$ is a nondegenerate random vector and its density function is infinitely differentiable and uniformly bounded over $z \in \mathbb{R}^d$ and $(t, x) \in I \times J$.
\end{prop}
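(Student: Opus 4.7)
The plan is to verify the hypotheses of Theorem \ref{th2017-08-24-1} for existence and smoothness of the density, and then the hypotheses of Proposition \ref{prop2017-08-24-2} for the uniform boundedness, taking $F = u(t, x)$ throughout.

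For Theorem \ref{th2017-08-24-1}, condition (i), namely $u(t, x) \in (\mathbb{D}^{\infty})^d$, is recorded in \eqref{eq2017-09-05-5}. For condition (ii), fix any $(t, x) \in \, ]0, T] \times \mathbb{R}$ and pick a closed non-trivial rectangle $I_0 \times J_0 \subset \, ]0, T] \times \mathbb{R}$ containing $(t, x)$; applying Proposition \ref{prop2017-09-05-1} on $I_0 \times J_0$ yields $\mathrm{E}[(\det \gamma_{u(t, x)})^{-p}] < \infty$ for every $p \geq 1$, which in particular forces $\det \gamma_{u(t, x)} > 0$ almost surely. Theorem \ref{th2017-08-24-1} then produces an infinitely differentiable density $p_{t, x}(\cdot)$, i.e.\ $u(t, x)$ is a nondegenerate random vector.

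To upgrade this to uniform boundedness over $z \in \mathbb{R}^d$ and $(t, x) \in I \times J$, I would invoke Proposition \ref{prop2017-08-24-2}: condition (a) there is exactly Proposition \ref{prop2017-09-05-1}, whose constant $c_1(p)$ is uniform in $(t, x) \in I \times J$; condition (b), namely $\mathrm{E}[\|D^l(u_i(t, x))\|_{\mathscr{H}^{\otimes l}}^p] \leq c_2(l, p)$, is exactly \eqref{eq2017-09-05-4}, again uniform in $(t, x) \in [0, T] \times \mathbb{R}$. Since Proposition \ref{prop2017-08-24-2} asserts that the resulting bound on the density depends only on $c_1(p)$ and $c_2(l, p)$, the bound on $p_{t, x}(z)$ is uniform over $z \in \mathbb{R}^d$ and $(t, x) \in I \times J$.

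There is no real obstacle beyond collecting these ingredients: the substantive step, namely the negative-moment estimate for the Malliavin matrix, was already carried out in Proposition \ref{prop2017-09-05-1}, and the Malliavin-derivative bound \eqref{eq2017-09-05-4} is a standard consequence of differentiating the mild formulation \eqref{eq2017-09-04-5}. The present statement is essentially a packaging of these two ingredients through the general Malliavin-calculus criteria of Theorem \ref{th2017-08-24-1} and Proposition \ref{prop2017-08-24-2}.
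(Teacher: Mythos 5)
Your proposal is correct and follows exactly the same route as the paper's proof, which simply cites Proposition \ref{prop2017-09-05-1}, \eqref{eq2017-09-05-5}, Theorem \ref{th2017-08-24-1}, \eqref{eq2017-09-05-4} and Proposition \ref{prop2017-08-24-2} in sequence. The only addition you make is the small (and sound) remark that for a general $(t,x) \in \,]0,T]\times\mathbb{R}$ one can enclose it in a compact rectangle $I_0\times J_0$ before applying Proposition \ref{prop2017-09-05-1}.
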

\begin{proof}
The conclusions follow from Proposition \ref{prop2017-09-05-1} and (\ref{eq2017-09-05-5}) together with Theorem \ref{th2017-08-24-1}, \eqref{eq2017-09-05-4} and Proposition \ref{prop2017-08-24-2}.
\end{proof}
\begin{proof}[Proof of Theorem \ref{theorem2017-08-17-1}(a)]
This is an immediate consequence of Proposition \ref{prop2017-09-07-1}.
\end{proof}

\section{Gaussian-type upper bound on the two-point density}\label{section2.5}

The aim of this section is to prove Theorem \ref{theorem2017-08-17-1}(b) and Remark \ref{remark2017-11-20-1}(b). We will follow the general approach in \cite[Section 6]{DKN09}; see also \cite[Section 5]{DKN13}.

\subsection{Technical lemmas and propositions}

In this subsection, we present several technical lemmas and propositions which will be used for the analysis of the Malliavin matrix.

\begin{lemma}[{{\cite[Proposition 4.4]{ChD15}}}]\label{lemma2017-09-13-1}
 For any $s, t \in [0, T], s \leq t$, and $x, y \in \mathbb{R}$, there exists a constant $C_T > 0$ such that
\begin{align*}
\int_0^T\int_{\mathbb{R}}(g_{\alpha}(r, v))^2drdv \leq C_T(|t - s|^{\frac{\alpha - 1}{\alpha}} + |x - y|^{\alpha - 1}),
\end{align*}
where
\begin{align*}
g_{\alpha}(r, v):= g_{t, x, s, y}^{\alpha}(r, v) = 1_{\{r < t\}}G_{\alpha}(t - r, x - v) - 1_{\{r < s\}}G_{\alpha}(s - r, y - v).
\end{align*}
\end{lemma}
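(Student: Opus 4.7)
The plan is to exploit the explicit Fourier symbol $\widehat{G_\alpha(t,\cdot)}(\lambda)=e^{-t|\lambda|^\alpha}$ of the Green kernel, apply Plancherel's identity in the space variable, and split the time integral at $r=s$. Since $g_\alpha(r,v)=0$ for $r\geq t$, only the ranges $r\in[0,s]$ and $r\in[s,t]$ contribute. On $[s,t]$ only the first term of $g_\alpha$ survives, and Plancherel together with the substitutions $u=t-r$, $\mu=(2u)^{1/\alpha}\lambda$ give
\begin{align*}
\int_s^t\!\!\int_{\mathbb{R}}G_\alpha(t-r,x-v)^2\,dv\,dr=\frac{1}{2\pi}\int_0^{t-s}\!\!\int_{\mathbb{R}}e^{-2u|\lambda|^\alpha}\,d\lambda\,du=C_\alpha (t-s)^{(\alpha-1)/\alpha},
\end{align*}
which already accounts for the temporal term on the right-hand side.

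For $r\in[0,s]$ the key step is the decomposition
\begin{align*}
G_\alpha(t-r,x-v)-G_\alpha(s-r,y-v)=A(r,v)+B(r,v),
\end{align*}
where $A$ isolates the time increment at the spatial point $x$ and $B$ isolates the space increment at the time $s-r$. Using $(A+B)^2\leq 2A^2+2B^2$, I treat the two contributions separately. For the time increment, Plancherel and the computation $\int_0^s e^{-2(s-r)|\lambda|^\alpha}\,dr=(1-e^{-2s|\lambda|^\alpha})/(2|\lambda|^\alpha)$ yield
\begin{align*}
\int_0^s\!\!\int_{\mathbb{R}}A^2\,dv\,dr\leq \frac{1}{4\pi}\int_{\mathbb{R}}\frac{(1-e^{-(t-s)|\lambda|^\alpha})^2}{|\lambda|^\alpha}\,d\lambda = C_\alpha (t-s)^{(\alpha-1)/\alpha}
\end{align*}
after the scaling $\mu=(t-s)^{1/\alpha}\lambda$. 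For the space increment I use $|e^{-i\lambda x}-e^{-i\lambda y}|^2=2(1-\cos(\lambda(x-y)))$ and the same time integral to obtain, after the scaling $\mu=|x-y|\lambda$,
\begin{align*}
\int_0^s\!\!\int_{\mathbb{R}}B^2\,dv\,dr\leq \frac{|x-y|^{\alpha-1}}{2\pi}\int_{\mathbb{R}}\frac{1-\cos\mu}{|\mu|^\alpha}\,d\mu.
\end{align*}

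Summing the three pieces produces the desired bound $C_T(|t-s|^{(\alpha-1)/\alpha}+|x-y|^{\alpha-1})$. The only point that requires verification, and the one place where the hypothesis $1<\alpha\leq 2$ is decisive, is the finiteness of the two dimensionless integrals $\int_{\mathbb{R}}(1-e^{-|\mu|^\alpha})^2|\mu|^{-\alpha}\,d\mu$ and $\int_{\mathbb{R}}(1-\cos\mu)|\mu|^{-\alpha}\,d\mu$. Near $\mu=0$, $(1-e^{-|\mu|^\alpha})^2\sim|\mu|^{2\alpha}$ and $1-\cos\mu\sim\mu^2/2$, so the integrands are of orders $|\mu|^\alpha$ and $|\mu|^{2-\alpha}$ respectively, both integrable since $\alpha>0$ and $\alpha\leq 2$; for large $|\mu|$, both integrands decay like $|\mu|^{-\alpha}$, which is integrable thanks to $\alpha>1$. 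This is the only genuinely delicate step; the rest is straightforward Fourier bookkeeping.
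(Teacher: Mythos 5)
The paper does not prove this lemma; it is cited as \cite[Proposition~4.4]{ChD15}, so there is no in-paper proof to compare against. Your Plancherel argument is correct: the split of the $r$-integral at $r=s$, the $A/B$ decomposition on $[0,s]$ obtained by adding and subtracting $G_\alpha(s-r,x-v)$, the scalings $\mu=(t-s)^{1/\alpha}\lambda$ and $\mu=|x-y|\lambda$ that extract $(t-s)^{(\alpha-1)/\alpha}$ and $|x-y|^{\alpha-1}$ respectively, and the integrability checks (decay $|\mu|^{-\alpha}$ at infinity requires $\alpha>1$; near the origin the integrands behave like $|\mu|^{\alpha}$ and $|\mu|^{2-\alpha}$, both locally integrable for $1<\alpha\le 2$) are all sound, and this is the standard Fourier route used in the cited reference and its surrounding literature.
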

The following identity, which follows from a simple calculation by using the semigroup property and scaling property of Green kernel \cite[Lemma 4.1(iii), (iv)]{ChD15}, will be used several times later on:
\begin{equation}\label{eq2017-09-05-1}
\int_a^b\int_{\mathbb{R}}G_{\alpha}^2(t - r, x - v)dvdr = c_{\alpha}\left((t - a)^{\frac{\alpha - 1}{\alpha}} - (t - b)^{\frac{\alpha - 1}{\alpha}}\right), \quad a \leq b \leq t,
\end{equation}
where $c_{\alpha}$ is a positive constant depending on $\alpha$.

We next give an estimate on the $L^p$-modulus of continuity of the derivative of the increment, analogous to \cite[Proposition 6.2]{DKN09}, which is comparable to (\ref{eq2017-08-18-1}).

\begin{prop}\label{prop2017-09-13-1}
For any $p \geq 2, m \geq 1$, there exists a constant $C_{p, T}$ such that for all $s, t \in [0, T], s \leq t, x, y \in \mathbb{R}$,
\begin{align}\label{eq2017-09-13-1}
\mbox{E}\left[\left\|D^m(u_i(t, x) - u_i(s, y))\right\|^p_{\mathscr{H}^{\otimes m}}\right] \leq C_{p, T}(|t - s|^{\frac{\alpha - 1}{\alpha}} + |x - y|^{\alpha - 1})^{p/2}, \quad i = 1, \ldots, d.
\end{align}
\end{prop}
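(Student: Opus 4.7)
The plan is to adapt the proof of \cite[Proposition 6.2]{DKN09} to the fractional setting, using Lemma \ref{lemma2017-09-13-1} as the central scaling tool that replaces the classical heat-kernel estimate used there. I proceed by induction on $m$, the base case $m=1$ being the essence of the argument.

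For $m=1$, formulas \eqref{eq2017-09-05-3}--\eqref{eq2017-09-05-4001122} give the decomposition
\begin{align*}
D^{(k)}_{r,v}(u_i(t,x)-u_i(s,y)) = g_\alpha(r,v)\,\sigma_{ik}(u(r,v)) + \Delta a_i(k,r,v),
\end{align*}
where, after regrouping the two defining integrals for $a_i(k,r,v,t,x)$ and $a_i(k,r,v,s,y)$, the remainder $\Delta a_i(k,r,v)$ is a Walsh stochastic integral of $g_\alpha(\theta,\eta)\,D^{(k)}_{r,v}(\sigma_{ij}(u(\theta,\eta)))$ against $W^j$ plus a pathwise integral of the same shape driven by $b_i$. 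Using $(a+b)^2\le 2a^2+2b^2$, taking the $L^2$-norm in $(r,v,k)$, then raising to the power $p/2$ and taking expectations, I split the bound into two pieces. The first, involving only $g_\alpha\sigma_{ik}$, is controlled by the boundedness of $\sigma_{ik}$ from \textbf{P1'} and Lemma \ref{lemma2017-09-13-1}, yielding directly the bound $C(|t-s|^{(\alpha-1)/\alpha}+|x-y|^{\alpha-1})^{p/2}$.

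For the contribution of $\Delta a_i$, I use Minkowski's integral inequality (valid since $p\ge 2$) to commute the $L^p(\Omega)$ norm with the $L^2$-norm in $(r,v,k)$, then Burkholder--Davis--Gundy for the stochastic integral and Cauchy--Schwarz with respect to the measure $G_\alpha(t-\theta,x-\eta)\,d\theta d\eta$ for the drift part. The Malliavin chain rule
\begin{align*}
D^{(k)}_{r,v}(\sigma_{ij}(u(\theta,\eta))) = \sum_{l=1}^d (\partial_l\sigma_{ij})(u(\theta,\eta))\,D^{(k)}_{r,v}(u_l(\theta,\eta))
\end{align*}
combined with \textbf{P1'} reduces the integrand to bounded coefficients times $\|D_{r,v}u(\theta,\eta)\|$. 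A Hölder step using $g_\alpha^2/\!\int g_\alpha^2$ as a probability density factors out $(\int g_\alpha^2)^{p/2-1}$, and then Fubini together with the uniform bound \eqref{eq2017-09-05-4} for $\mbox{E}[\|Du(\theta,\eta)\|_\mathscr{H}^p]$ and a second application of Lemma \ref{lemma2017-09-13-1} give the desired bound.

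For $m\ge 2$ I argue by induction. Iterating \eqref{eq2017-09-05-3} as in \cite[Proposition 4.1]{DKN07} shows that $D^m u_i(t,x)$ satisfies an SPDE whose leading term is a symmetrized sum of products of factors $G_\alpha(t-r_j,x-v_j)\sigma_{\cdot\cdot}(u(r_j,v_j))$ times lower-order Malliavin derivatives, plus stochastic and drift integrals of the same shape as in the $m=1$ case. The increment $D^m(u_i(t,x)-u_i(s,y))$ then decomposes into a leading term in which one factor $G_\alpha(t-r_j,x-v_j)-G_\alpha(s-r_j,y-v_j) = g_\alpha(r_j,v_j)$ appears, plus a remainder handled by BDG, the Faà di Bruno chain rule, the induction hypothesis, and \eqref{eq2017-09-05-4}. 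The expected obstacle lies in the bookkeeping at higher orders: the chain rule produces sums of products of Malliavin derivatives of orders ranging from $1$ to $m$, and to recover the optimal exponent $p/2$ on $(|t-s|^{(\alpha-1)/\alpha}+|x-y|^{\alpha-1})$ one must isolate exactly one $g_\alpha$-factor in the ``leading'' position and estimate the remaining factors uniformly using \eqref{eq2017-09-05-4}, mimicking the strategy of \cite[Proposition 6.2]{DKN09} and \cite[Lemma 4.2]{Mor99}.
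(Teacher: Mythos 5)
Your handling of the stochastic-integral contribution to $\Delta a_i$ is sound: after Burkholder for Hilbert-space-valued martingales and the chain rule, the factor $g_\alpha^2$ appears naturally, and Hölder with respect to the probability density $g_\alpha^2/\int g_\alpha^2$ combined with \eqref{eq2017-09-05-4} and Lemma \ref{lemma2017-09-13-1} does give $(\int g_\alpha^2)^{p/2}$. This matches the paper's treatment of its $I_2$.

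The gap is in the drift contribution. You write it as $\int g_\alpha(\theta,\eta)\,D^{(k)}_{r,v}(b_i(u(\theta,\eta)))\,d\theta\,d\eta$ and propose to estimate it by ``Cauchy--Schwarz with respect to the measure $G_\alpha(t-\theta,x-\eta)\,d\theta\,d\eta$''. But the integrand involves $g_\alpha$, not $G_\alpha$, and $g_\alpha$ is \emph{signed}, so $g_\alpha\,d\theta\,d\eta$ is not a measure; splitting $g_\alpha$ into its two $G_\alpha$-pieces and estimating each against its own Green-kernel measure produces bounds of order $t^{p/2}$ and $s^{p/2}$, which do not vanish as $(t,x)\to(s,y)$. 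On the other hand, Cauchy--Schwarz with respect to Lebesgue measure (the route used in \cite[Proposition 6.2]{DKN09}) fails here, because after Fubini one would be left with $\int_{\mathbb{R}}d\eta$ of a quantity that is only controlled \emph{uniformly} in $\eta$ by \eqref{eq2017-09-05-4}; on the unbounded spatial domain this integral is infinite. The paper flags this obstruction explicitly at the start of the proof. Its resolution is structurally different from yours: the drift increment is rewritten via a time/space shift $\theta\mapsto\theta-(t-s)$, $\eta\mapsto\eta-(x-y)$ into $I_3$ (integral of $G_\alpha(t-\cdot,x-\cdot)\,D(b_i(u))$ over the short interval $[0,t-s]$, which contributes the small factor $(t-s)^p$) plus $I_4$ (integral of $G_\alpha(s-\cdot,y-\cdot)$ against $D(b_i(u(t-s+\theta,x-y+\eta))-b_i(u(\theta,\eta)))$). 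The term $I_4$ contains the Malliavin derivative of an \emph{increment} of $u$, i.e.\ exactly the quantity being estimated; closing the argument therefore requires introducing $\varphi(h,z,\theta)$ and applying Gronwall's lemma, a step that is absent from your outline. Without the shift decomposition and the Gronwall closure, the drift estimate does not go through.

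Your sketch of the induction on $m$ is in the same spirit as the paper, and is fine modulo the same caveat: at every order, the drift remainders must be handled by the shift/Gronwall device rather than by Cauchy--Schwarz.
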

\begin{proof}
The proof is slightly different from that of \cite[Proposition 6.2]{DKN09} since the estimate for $I_3$ in \cite[Proposition 6.2]{DKN09} requires the Cauchy-Schwartz inequality, which is not applicable in our situation because the Lebesgue measure of $\mathbb{R}$ is infinite.

Assume $m = 1$. Using (\ref{eq2017-09-05-3}), we see that, for any $p \geq 2$,
\begin{align}\label{eq2018-05-29-1}
\mbox{E}\left[\left\|D(u_i(t, x) - u_i(s, y))\right\|^p_{\mathscr{H}}\right] \leq c\left(\mbox{E}\left[|I_1|^{p/2}\right] + \mbox{E}\left[|I_2|^{p/2}\right] + \mbox{E}\left[|I_3|^{p/2}\right] + \mbox{E}\left[|I_4|^{p/2}\right]\right),
\end{align}
where
\begin{align*}
I_1 &= \sum_{k = 1}^d\int_0^T dr\int_{\mathbb{R}}dv\left(g_{\alpha}(r, v)\sigma_{ik}(u(r, v))\right)^2, \\
I_2 &= \sum_{j, k = 1}^d\int_0^T dr\int_{\mathbb{R}}dv\left(\int_0^T\int_{\mathbb{R}}g_{\alpha}(\theta, \eta)D_{r, v}^{(k)}(\sigma_{ij}(u(\theta, \eta)))W^j(d\theta, d\eta)\right)^2, \\
I_3 &= \sum_{k = 1}^d\int_0^T dr\int_{\mathbb{R}}dv\left(\int_0^{t - s}\int_{\mathbb{R}}G_{\alpha}(t - \theta, x - \eta)D_{r, v}^{(k)}(b_i(u(\theta, \eta)))d\theta d\eta\right)^2,\\
I_4 &= \sum_{k = 1}^d\int_0^T dr\int_{\mathbb{R}}dv\Big(\int_0^{s}\int_{\mathbb{R}}G_{\alpha}(s - \theta, y - \eta)\\
& \quad \quad \quad \quad \quad \quad \quad \quad \quad \quad \times D_{r, v}^{(k)}(b_i(u(t - s + \theta, x - y + \eta)) - b_i(u(\theta, \eta)))d\theta d\eta\Big)^2.
\end{align*}
By hypothesis \textbf{P1'} and Lemma \ref{lemma2017-09-13-1},
\begin{align}\label{eq2018-05-29-2}
\mbox{E}\left[|I_1|^{p/2}\right] \leq C_{p,T}(|t - s|^{\frac{\alpha - 1}{\alpha}} + |x - y|^{\alpha - 1})^{p/2}.
\end{align}
For the term $I_2$, we proceed as in \cite[Proof of Prop. 6.2]{DKN09}, using Lemma \ref{lemma2017-09-05-5},  \eqref{eq2017-09-05-4} and Lemma \ref{lemma2017-09-13-1} instead of their Lemma 7.6, (4.1) and Lemma 6.1, and we obtain
\begin{align}\label{eq2018-05-29-3}
\mbox{E}\left[|I_2|^{p/2}\right] \leq C_{p,T}(|t - s|^{\frac{\alpha - 1}{\alpha}} + |x - y|^{\alpha - 1})^{p/2}.
\end{align}
To estimate $I_3$, denoting $\Theta_{k, l} := D_{r, v}^{(k)}(u_l(\theta, \eta))$, we use H\"{o}lder's inequality with respect to the measure $G_{\alpha}(t - \theta, x - \eta)d\theta d\eta$ twice to get that
\begin{align}\label{eq2018-05-29-4}
\mbox{E}\left[|I_3|^{p/2}\right]
 &\leq C_{p, T}\sum_{k, l = 1}^d(t - s)^{p/2}\mbox{E}\bigg[\Big(\int_0^{t - s}d\theta\int_{\mathbb{R}}d\eta \, G_{\alpha}(t - \theta, x - \eta)\int_0^Tdr\int_{\mathbb{R}}dv \, \Theta_{k, l}^2\Big)^{p/2}\bigg] \nonumber\\
  &\leq C_{p, T}\sum_{k, l = 1}^d(t - s)^{p/2}\left(\int_0^{t - s}d\theta\int_{\mathbb{R}}d\eta \, G_{\alpha}(t - \theta, x - \eta)\right)^{\frac{p}{2} - 1}\nonumber \\
  & \quad  \times\int_0^{t - s}d\theta\int_{\mathbb{R}}d\eta \, G_{\alpha}(t - \theta, x - \eta)\sup_{(\theta, \eta) \in [0, T] \times \mathbb{R}}\mbox{E}\left[\left[\int_0^Tdr\int_{\mathbb{R}}dv \, \Theta_{k, l}^2\right]^{p/2}\right] \nonumber\\
   &\leq C_{p, T}(t - s)^{p},
\end{align}
where in the last inequality we use (\ref{eq2017-09-05-4}).
Using H\"{o}lder's inequality with respect to the measure $G_{\alpha}(t - \theta, x - \eta)d\theta d\eta$,
\begin{align*}
 I_4 &\leq c\sum_{k = 1}^d\int_0^T dr\int_{\mathbb{R}}dv\int_0^{s}d\theta\int_{\mathbb{R}}d\eta \, G_{\alpha}(s - \theta, y - \eta)\\
 &  \quad \quad \quad \quad \quad \quad  \quad \quad \quad \quad \times \left(D_{r, v}^{(k)}\left(b_i(u(t - s + \theta, x - y + \eta)) - b_i(u(\theta, \eta))\right)\right)^2.
 \end{align*}
We apply the chain rule to compute $D_{r, v}^{(k)}b_i(u(t - s + \theta, x - y + \eta)) - D_{r, v}^{(k)}b_i(u(\theta, \eta))$,  subtract and add the term  $\sum_{l = 1}^d\frac{\partial b_i}{\partial x_l}(u(t - s + \theta, x - y + \eta))D_{r, v}^{(k)}u_l(\theta, \eta)$. Then by hypothesis \textbf{P1'}, this is bounded above by
 \begin{align*}
 & c\sum_{k, l = 1}^d\int_0^T dr\int_{\mathbb{R}}dv\int_0^{s}d\theta \int_{\mathbb{R}}d\eta \, G_{\alpha}(s - \theta, y - \eta)\\
 & \quad \quad \quad  \quad  \quad \quad \quad  \quad \quad \quad \times   \left(D_{r, v}^{(k)}(u_l(t - s + \theta, x - y + \eta) - u_l(\theta, \eta))\right)^2 \\
  & \quad   + c\sum_{k, l = 1}^d\int_0^T dr\int_{\mathbb{R}}dv\int_0^{s}d\theta\int_{\mathbb{R}}d\eta \, G_{\alpha}(s - \theta, y - \eta) \\
 &  \quad \quad \quad  \quad  \quad \quad \quad  \quad \quad\quad \quad \times\left(u_l(t - s + \theta, x - y + \eta) - u_l(\theta, \eta)\right)^2\Theta_{k, l}^2 \\
 &:= I_{41} + I_{42}.
\end{align*}
Using H\"{o}lder's inequality  with respect to the measure $G_{\alpha}(t - \theta, x - \eta)d\theta d\eta$, we have
\begin{align}
\mbox{E}\left[|I_{42}|^{p/2}\right]&\leq c\sum_{k, l = 1}^d\left(\int_0^{s}d\theta\int_{\mathbb{R}}d\eta \, G_{\alpha}(s - \theta, y - \eta)\right)^{\frac{p}{2} - 1}\int_0^{s}d\theta\int_{\mathbb{R}}d\eta \, G_{\alpha}(s - \theta, y - \eta)\nonumber\\
& \quad \quad \quad \quad \times \mbox{E}\left[\left|u_l(t - s + \theta, x - y + \eta) - u_l(\theta, \eta)\right|^p
 \left(\int_0^Tdr\int_{\mathbb{R}}dv \, \Theta_{k, l}^2\right)^{p/2}\right].\nonumber
 \end{align}
 By  the Cauchy-Schwartz inequality, this is bounded above by
 \begin{align}\label{eq2018-05-29-5}
& cs^{p/2}\sum_{k, l = 1}^d\sup_{(\theta, \eta) \in [0, T] \times \mathbb{R}}\mbox{E}\left[\left(\int_0^Tdr\int_{\mathbb{R}}dv \, \Theta_{k, l}^2\right)^{p}\right]^{1/2}\nonumber\\
& \qquad \times\sup_{(\theta, \eta) \in [0, T] \times \mathbb{R}}\mbox{E}\left[\left|u_l(t - s + \theta, x - y + \eta) - u_l(\theta, \eta)\right|^{2p}\right]^{1/2}\nonumber\\
& \quad\leq C_{p, T}s^{p/2}(|t - s|^{\frac{\alpha - 1}{\alpha}} + |x - y|^{\alpha - 1})^{p/2}
\end{align}
where we use (\ref{eq2017-09-05-4}) and (\ref{eq2017-08-18-1}).

Denote
 \begin{align*}
 \varphi(h, z, \theta) := \sup_{\eta \in \mathbb{R}}\sum_{k, l = 1}^d\mbox{E}\Big[\Big(\int_0^T\int_{\mathbb{R}}\left(D_{r, v}^{(k)}(u_l(h + \theta, z + \eta) - u_l(\theta, \eta))\right)^2drdv\Big)^{\frac{p}{2}}\Big].
 \end{align*}
By H\"{o}lder's inequality,
\begin{align}\label{eq2018-05-29-6}
\mbox{E}\left[|I_{41}|^{p/2}\right]
&\leq c\sum_{k, l = 1}^d \left(\int_0^{s}\int_{\mathbb{R}}G_{\alpha}(s - \theta, y - \eta)d\theta d\eta\right)^{\frac{p}{2} - 1} \int_0^{s}d\theta\int_{\mathbb{R}}d\eta \, G_{\alpha}(s - \theta, y - \eta)\nonumber\\
& \quad \quad \quad \quad \times \mbox{E}\Big[\Big(\int_0^T\int_{\mathbb{R}}(D_{r, v}^{(k)}(u_l(t - s + \theta, x - y + \eta) - u_l(\theta, \eta)))^2drdv\Big)^{\frac{p}{2}}\Big] \nonumber\\
&\leq C_{p, T}\int_0^s \varphi(t - s, x - y, \theta)d\theta.
\end{align}
Denote $h = t - s$ and $z = x - y$. From \eqref{eq2018-05-29-1}--\eqref{eq2018-05-29-6}, we conclude that for all $h \geq 0$, $z \in \mathbb{R}$, $s \in [0, T]$, $y \in \mathbb{R}$ and $1 \leq i \leq d$,
\begin{align*}
\mbox{E}\left[\left\|D(u_i(h + s, z + y) - u_i(s, y))\right\|^p_{\mathscr{H}}\right]  \leq C_{p, T}(|h|^{\frac{\alpha - 1}{\alpha}} + |z|^{\alpha - 1})^{p/2} + C_{p, T}\int_0^s \varphi(h, z, \theta)d\theta.
\end{align*}
Taking the supremum over $y \in \mathbb{R}$ on the left-hand side of the above inequality, we obtain that  for all $h \geq 0$, $z \in \mathbb{R}$ and $s \in [0, T]$,
\begin{align*}
\varphi(h, z, s) \leq C_{p, T}(|h|^{\frac{\alpha - 1}{\alpha}} + |z|^{\alpha - 1})^{p/2} + C_{p, T}\int_0^s \varphi(h, z, \theta)d\theta.
\end{align*}
By Gronwall's lemma (see \cite[p.543]{ReY99}), we obtain that
\begin{align*}
\sup_{s \in [0, T]}\varphi(h, z, s) \leq C_{p, T}(|h|^{\frac{\alpha - 1}{\alpha}} + |z|^{\alpha - 1})^{p/2},
\end{align*}
which implies \eqref{eq2017-09-13-1} with $m = 1$.

The case $m > 1$ follows along the same lines by using (\ref{eq2017-09-05-4}) and the stochastic partial differential equations satisfied by the iterated derivatives (see \cite[Proposition 4.1]{DKN09}).
\end{proof}

The following lemma is another version of \cite[Lemma 6.11]{DKN09}.
\begin{lemma}\label{lemma2017-09-15-1}
Assume \textbf{P1'}. Fix $T > 0, q \geq 1$. There exists a constant $c = c(q, T) \in \, ]0, \infty[$ such that for every $0 < 2\epsilon \leq s \leq t \leq T$ and $x \in \mathbb{R}$,
\begin{align*}
\mbox{E}\left[\left(\sum_{k = 1}^d\int_{s - \epsilon}^sdr\int_{\mathbb{R}}dv\sum_{i = 1 }^da_i^2(k, r, v, t, x)\right)^q\right] \leq c(t - s +\epsilon)^{(\alpha - 1)q/\alpha}\epsilon^{(\alpha - 1)q/\alpha}.
\end{align*}
\end{lemma}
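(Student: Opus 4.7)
The plan is to follow the general scheme of \cite[Lemma 6.11]{DKN09} but adapt it to the fractional kernel $G_\alpha$ and to the unbounded spatial domain $\mathbb{R}$. I would first split $a_i(k, r, v, t, x) = a_i^{\mathrm{st}} + a_i^{\mathrm{dt}}$ into its stochastic (Walsh integral) and deterministic components as in \eqref{eq2017-09-05-4001122}, use $(A + B)^2 \leq 2A^2 + 2B^2$, and treat each piece separately.

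For the stochastic part, I would apply Burkholder-Davis-Gundy's inequality to the Walsh stochastic integral, then invoke the chain rule together with the boundedness of $\partial_l \sigma_{ij}$ provided by \textbf{P1'}, to reduce the analysis to $q$-th moments of $\int_r^t\int_\mathbb{R} G_\alpha^2(t - \theta, x - \eta)\,\sum_l (D^{(k)}_{r,v} u_l(\theta, \eta))^2\, d\theta\, d\eta$. Minkowski's integral inequality for the outer $dr\, dv$-integration allows me to pull the $L^q(\Omega)$-norm inside, and then Fubini swaps the order of integration so that the resulting integral runs over $\theta \in [s - \epsilon, t]$, $\eta \in \mathbb{R}$, $r \in [s - \epsilon, s \wedge \theta]$, $v \in \mathbb{R}$. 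The inner $dr\, dv$-integral of the Malliavin derivative moment is the key ingredient: it is controlled by the integrated Malliavin derivative estimate of Lemma \ref{lemma2017-09-06-1} (in its higher-moment form from Lemma \ref{lemma2017-09-14-1}), which, in the spirit of the identity \eqref{eq2017-09-05-1}, yields a bound of the form $c\bigl[(\theta - s + \epsilon)^{(\alpha - 1)/\alpha} - (\theta - s)_+^{(\alpha - 1)/\alpha}\bigr] \leq c\, \epsilon^{(\alpha - 1)/\alpha}$. Integrating this against $G_\alpha^2(t - \theta, x - \eta)$ over $d\theta\, d\eta$ and applying \eqref{eq2017-09-05-1} once more produces an additional factor of $c(t - s + \epsilon)^{(\alpha - 1)/\alpha}$; raising to the $q$-th power gives the target.

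For the deterministic part, I would begin with Cauchy-Schwarz with respect to the finite measure $G_\alpha(t - \theta, x - \eta)\, d\theta\, d\eta$ (total mass $t - r$), reducing matters to $(t - r)\int_r^t \int_\mathbb{R} G_\alpha\,(D^{(k)}_{r, v} b_i(u(\theta, \eta)))^2\, d\theta\, d\eta$, and then proceed exactly as in the stochastic case: chain rule, Minkowski, Fubini, and the same integrated Malliavin estimate. The outer integral $\int_{s - \epsilon}^t d\theta \int_\mathbb{R} d\eta\, G_\alpha(t - \theta, x - \eta) = t - s + \epsilon$ yields an auxiliary bound of order $C_T (t - s + \epsilon)^2\, \epsilon^{(\alpha - 1)/\alpha}$, and the excess factor $(t - s + \epsilon)^{2 - (\alpha - 1)/\alpha}$ is bounded by a power of $T$ (since $t \leq T$), so it can be absorbed into $C_{q, T}$ to match the target.

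The main obstacle is the integrated Malliavin derivative estimate (Lemma \ref{lemma2017-09-06-1}, or rather its sharp version Lemma \ref{lemma2017-09-14-1}), which must be established beforehand and which supplies the critical $\epsilon^{(\alpha - 1)/\alpha}$ factor; without it one loses a power of $\epsilon$ and fails to reach the optimal exponent needed later in Proposition \ref{prop2017-09-19-4}. A secondary subtlety, absent from \cite{DKN09}, is that because the spatial integration is over all of $\mathbb{R}$, one must consistently apply Hölder-type inequalities against the finite measures $G_\alpha\, d\theta\, d\eta$ and $G_\alpha^2\, d\theta\, d\eta$ rather than against Lebesgue measure on a bounded interval, and carefully track the resulting powers of $t - r$ through the successive integrations.
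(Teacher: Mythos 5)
Your proposal follows essentially the same route as the paper's proof: split $a_i$ into the Walsh-integral part and the drift part, handle the stochastic part via Burkholder (Lemma~\ref{lemma2017-09-05-5}), the chain rule and boundedness of $\partial_l \sigma_{ij}$, then Fubini and an integrated Malliavin estimate together with \eqref{eq2017-09-05-1}; handle the drift part via Cauchy--Schwarz against the finite measure $G_\alpha\,d\theta\,d\eta$, then the same integrated Malliavin estimate, and absorb the surplus power of $(t-s+\epsilon)$ into the constant. Using Minkowski's integral inequality to pull the $L^q(\Omega)$-norm inside the outer integral is a cosmetic variant of the paper's H\"older inequality against $G_\alpha^2\,d\theta\,d\eta$ (resp. $G_\alpha\,d\theta\,d\eta$); both give the same exponents.

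One clarification you should make: the integrated Malliavin bound needed here is exactly Lemma~\ref{lemma2017-09-06-1}, which already covers arbitrary moments $p\ge 1$, not Lemma~\ref{lemma2017-09-14-1}. Lemma~\ref{lemma2017-09-14-1} is not a ``higher-moment form'' of Lemma~\ref{lemma2017-09-06-1}; it is a sharper bound that only holds under the extra separation hypothesis $t-s > c_0\epsilon^{\gamma_0}$, which is \emph{not} available in the setting of Lemma~\ref{lemma2017-09-15-1} (it is used precisely to upgrade this lemma to Lemma~\ref{lemma2017-09-19-1}). Your computation in fact only invokes the $\epsilon^{(\alpha-1)/\alpha}$ factor that Lemma~\ref{lemma2017-09-06-1} supplies, so the proof is not broken, but the parenthetical appeal to Lemma~\ref{lemma2017-09-14-1} and the claimed $\theta$-dependent bound $c[(\theta-s+\epsilon)^{(\alpha-1)/\alpha}-(\theta-s)_+^{(\alpha-1)/\alpha}]$ for the inner $dr\,dv$-integral (which is not what Lemma~\ref{lemma2017-09-06-1} states; it gives a uniform $C\epsilon^{(\alpha-1)p/\alpha}$) should be corrected to avoid the impression that the restricted lemma is being used outside its range of validity.
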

\begin{proof}
The proof follows the same lines as \cite[Lemma 6.11]{DKN09}. Define
\begin{align*}
A := \sum_{k = 1}^d\int_{s - \epsilon}^sdr\int_{\mathbb{R}}dv\sum_{i = 1 }^da_i^2(k, r, v, t, x).
\end{align*}
From (\ref{eq2017-09-05-4001122}), we write
\begin{align*}
\mbox{E}\left[|A|^q\right] \leq c\left(\mbox{E}\left[|A_1|^q\right] + \mbox{E}\left[|A_2|^q\right]\right),
\end{align*}
where
\begin{align}
A_1 &:= \sum_{i, j, k = 1}^d\int_{s - \epsilon}^sdr\int_{\mathbb{R}}dv\left|\int_r^t\int_{\mathbb{R}}G_{\alpha}(t - \theta, x - \eta)D_{r, v}^{(k)}(\sigma_{ij}(u(\theta, \eta)))W^j(d\theta, d\eta)\right|^2, \label{eq2018-04-30-2} \\
A_2 &:= \sum_{i, k = 1}^d\int_{s - \epsilon}^sdr\int_{\mathbb{R}}dv\left|\int_r^t\int_{\mathbb{R}}G_{\alpha}(t - \theta, x - \eta)D_{r, v}^{(k)}(b_{i}(u(\theta, \eta)))d\theta d\eta\right|^2.
\end{align}

We bound the $q$-th moment of $A_1$ and $A_2$ separately.
As regards $A_1$, we follow the calculation in \cite[p.416-417]{DKN09}, with their $G$ replaced by our $G_{\alpha}$, and we use \eqref{eq2017-09-05-1} instead of their Lemma 7.3 and our Lemma \ref{lemma2017-09-06-1} instead of their Lemma 7.5. This replaces their exponent $\frac{1}{2}$ with $\frac{\alpha - 1}{\alpha}$, and we obtain
\begin{align}\label{eq2017-09-15-10}
\mbox{E}\left[|A_{1}|^q\right] &\leq c(t - s + \epsilon)^{\frac{\alpha - 1}{\alpha}q}\epsilon^{\frac{\alpha - 1}{\alpha}q}.
\end{align}

Next we derive a similar bound for $A_2$.  By the Cauchy-Schwartz inequality with respect to the measure $G_{\alpha}(t - \theta, x - \eta)d\theta d\eta$,
\begin{align*}
A_2 &\leq \sum_{i, k = 1}^d\int_{s - \epsilon}^sdr\int_{\mathbb{R}}dv \, (t - r)\int_r^t\int_{\mathbb{R}}G_{\alpha}(t - \theta, x - \eta)\left(D_{r, v}^{(k)}(b_{i}(u(\theta, \eta)))\right)^2d\theta d\eta \\
 & \leq \sum_{i, k = 1}^d(t - s + \epsilon)\int_{s - \epsilon}^sdr\int_{\mathbb{R}}dv \int_r^t\int_{\mathbb{R}}G_{\alpha}(t - \theta, x - \eta)\left(D_{r, v}^{(k)}(b_{i}(u(\theta, \eta)))\right)^2d\theta d\eta.
\end{align*}
By hypothesis \textbf{P1'} and Fubini's theorem,
\begin{align}\label{eq2017-09-15-14}
\mbox{E}\left[|A_2|^q\right] &\leq c(t - s + \epsilon)^q\sum_{k, l = 1}^d \mbox{E}\bigg[\Big|\int_{s - \epsilon}^sdr\int_{\mathbb{R}}dv\int_r^td\theta\int_{\mathbb{R}}d\eta \, G_{\alpha}(t - \theta, x - \eta)(D_{r, v}^{(k)}(u_l(\theta, \eta)))^2\Big|^q\bigg] \nonumber\\
&=  c(t - s + \epsilon)^q\sum_{k, l = 1}^d \mbox{E}\Bigg[\Big|\int_{s - \epsilon}^td\theta\int_{\mathbb{R}}d\eta \, G_{\alpha}(t - \theta, x - \eta)\nonumber\\
& \quad \quad \quad \quad  \quad \quad \quad \quad \quad \quad \quad \quad \quad \quad \times  \int_{s - \epsilon}^{s \wedge \theta}dr\int_{\mathbb{R}}dv\left(D_{r, v}^{(k)}(u_l(\theta, \eta))\right)^2\Big|^q\Bigg].
\end{align}
We apply H\"{o}lder's inequality with respect to the measure $G_{\alpha}(t - \theta, x - \eta)d\theta d\eta$ to find that
\begin{align}\label{eq2017-09-15-12}
\mbox{E}\left[|A_{2}|^q\right] &\leq c(t - s + \epsilon)^q\sum_{k, l = 1}^d\left|\int_{s - \epsilon}^{t}d\theta\int_{\mathbb{R}}d\eta \, G_{\alpha}(t - \theta, x - \eta)\right|^{q - 1}\nonumber\\
& \quad  \times \int_{s - \epsilon}^{t}d\theta\int_{\mathbb{R}}d\eta \, G_{\alpha}(t - \theta, x - \eta)\mbox{E}\left[\left|\int_{s - \epsilon}^{s \wedge \theta}dr\int_{\mathbb{R}}dv\left(D_{r, v}^{(k)}(u_l(\theta, \eta))\right)^2\right|^q\right] \nonumber \\
 &\leq c(t - s + \epsilon)^q\left|\int_{s - \epsilon}^{t}d\theta\int_{\mathbb{R}}d\eta \, G_{\alpha}(t - \theta, x - \eta)\right|^{q}\epsilon^{\frac{\alpha - 1}{\alpha}q}\nonumber\\
 &= c(t - s + \epsilon)^{2q}\epsilon^{\frac{\alpha - 1}{\alpha}q},
\end{align}
where in the second inequality we use Lemma \ref{lemma2017-09-06-1}.
Hence \eqref{eq2017-09-15-10} and \eqref{eq2017-09-15-12} prove the lemma.
\end{proof}

The following lemma  improves Lemma \ref{lemma2017-09-15-1} by using Lemma \ref{lemma2017-09-14-1}. As we mentioned in Section \ref{section1}, this is a key ingredient in our improvement of the lower bound in \eqref{eq2017-10-30-1000}.
\begin{lemma} \label{lemma2017-09-19-1}
Assume \textbf{P1'}. Fix $T > 0, c_0 > 1$ and $0 < \gamma_0 < 1$.  For all $q \geq 1$, there exists a constant $c = c(c_0, q, T) \in \,]0, \infty[$ such that for every $0 < 2\epsilon \leq s \leq t \leq T$ with $t - s > c_0\epsilon^{\gamma_0}$ and $x \in \mathbb{R}$,
\begin{align*}
\mbox{E}\left[\left(\sum_{k = 1}^d\int_{s - \epsilon}^sdr\int_{\mathbb{R}}dv\sum_{i = 1 }^da_i^2(k, r, v, t, x)\right)^q\right] \leq c\epsilon^{\min((1 + \gamma_0)\frac{\alpha - 1}{\alpha}, 1 - \frac{\gamma_0}{\alpha})q}.
\end{align*}
\end{lemma}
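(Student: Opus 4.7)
My plan is to follow the structure of the proof of Lemma \ref{lemma2017-09-15-1}, with two modifications that exploit the extra hypothesis $t - s > c_0\epsilon^{\gamma_0}$: (i) replace the crude Malliavin-derivative estimate of Lemma \ref{lemma2017-09-06-1} by the sharper Lemma \ref{lemma2017-09-14-1} wherever the working variable $\theta$ is separated from the interval $[s-\epsilon, s]$; and (ii) split the outer $\theta$-integration (which appears after using Fubini) into a \emph{near} regime $[s-\epsilon, s+\epsilon^{\gamma_0}]$, where Lemma \ref{lemma2017-09-06-1} must still be used, and a \emph{far} regime $[s+\epsilon^{\gamma_0}, t]$, where the macroscopic gap $\theta - s \ge \epsilon^{\gamma_0}$ makes Lemma \ref{lemma2017-09-14-1} strictly better. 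The constraint $t - s > c_0\epsilon^{\gamma_0}$ is precisely what ensures that the far regime is non-degenerate.

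Starting from the decomposition $A \le c(|A_1|+|A_2|)$ as in \eqref{eq2018-04-30-2}, I would first handle $A_2$ by carrying out exactly the same sequence of Cauchy--Schwartz, Fubini, and H\"older reductions with respect to the measure $G_\alpha(t-\theta, x-\eta)\,d\theta\,d\eta$ as in \eqref{eq2017-09-15-14}--\eqref{eq2017-09-15-12}. This reduces the estimate to controlling $\int_{s-\epsilon}^t d\theta\int_\mathbb{R} d\eta\, G_\alpha(t-\theta, x-\eta)\, \mathrm{E}[\,|\cdot|^q]$, where the inner expectation is a $q$-th moment of the Malliavin derivative of $u$. On the near part of this integral, \eqref{eq2017-09-05-1} and the mean value theorem (using $\xi \ge (c_0-1)\epsilon^{\gamma_0}$) show that the $G_\alpha$-mass of $[s-\epsilon, s+\epsilon^{\gamma_0}]$ is at most $c\epsilon^{\gamma_0(\alpha-1)/\alpha}$, which combines with the old inner bound $\epsilon^{(\alpha-1)q/\alpha}$ from Lemma \ref{lemma2017-09-06-1} to yield the first candidate exponent $(1+\gamma_0)(\alpha-1)/\alpha$. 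On the far part, Lemma \ref{lemma2017-09-14-1} replaces the inner bound by a sharper estimate involving a negative power of $\theta - s \ge \epsilon^{\gamma_0}$, and integrating the resulting singularity against $G_\alpha$ over $[s+\epsilon^{\gamma_0}, t]$ produces the second candidate exponent $1 - \gamma_0/\alpha$.

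The treatment of $A_1$ is analogous: apply Burkholder's and Minkowski's inequalities as in \cite[pp.~416--417]{DKN09} (with their $G$ replaced by $G_\alpha$) to reduce to a double integral against $G_\alpha^2(t-\theta, x-\eta)$, and then carry out the same near/far decomposition of $\theta$ together with \eqref{eq2017-09-05-1} (in place of their Lemma~7.3) and Lemma \ref{lemma2017-09-14-1} (in place of their Lemma~7.5). The two regimes again give the same pair of candidate exponents, and taking the worst over $A_1, A_2$ yields the bound $c\epsilon^{\min\{(1+\gamma_0)(\alpha-1)/\alpha,\,1-\gamma_0/\alpha\}q}$. The main technical obstacle is careful exponent bookkeeping across the near/far split: in particular, one must verify that the $(t-s+\epsilon)^{\cdot}$ factors introduced by H\"older absorb harmlessly into the $T$-dependent constant, and that the singular weight $(\theta - s)^{-1/\alpha}$ coming from Lemma \ref{lemma2017-09-14-1} integrates precisely to give the competing $\epsilon^{1-\gamma_0/\alpha}$ rather than any stray power of $t-s$ or $\epsilon$.
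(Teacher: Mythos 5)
Your proposal follows essentially the same path as the paper's proof: decompose $A \le c(|A_1| + |A_2|)$ as in Lemma~\ref{lemma2017-09-15-1}, apply Burkholder/Fubini/H\"older as there, then split the outer $\theta$-integral into a near regime where Lemma~\ref{lemma2017-09-06-1} is used and a far regime where Lemma~\ref{lemma2017-09-14-1} takes over, producing the exponents $(1+\gamma_0)\tfrac{\alpha-1}{\alpha}$ and $1 - \tfrac{\gamma_0}{\alpha}$ respectively. The paper actually uses a three-way split $[s-\epsilon,s]$, $[s, s+c_0\epsilon^{\gamma_0}]$, $[s+c_0\epsilon^{\gamma_0}, t]$; the first two pieces coincide with your single near regime and give the same binding exponent, so that difference is purely organizational. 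Two details of your sketch do need adjusting, though. First, your far-regime boundary $s + \epsilon^{\gamma_0}$ is too close to $s$ for Lemma~\ref{lemma2017-09-14-1} to apply as stated, since that lemma requires a gap $> c_0\epsilon^{\gamma_0}$ with $c_0 > 1$; split instead at $s + c_0\epsilon^{\gamma_0}$ (as the paper does), or note that the proof of Lemma~\ref{lemma2017-09-14-1} in fact works with $c_0$ replaced by $1$. (Your near-regime estimate on the $G_\alpha^2$-mass via the mean value theorem, on the other hand, does use $c_0 > 1$ through $t - s - \epsilon^{\gamma_0} \ge (c_0-1)\epsilon^{\gamma_0}$, and that part is fine.) Second, the far-regime inner bound coming out of Lemma~\ref{lemma2017-09-14-1} is already $c\,\epsilon^{(1-\gamma_0/\alpha)q}$ \emph{uniformly} in $\theta$; there is no residual singular weight $(\theta-s)^{-1/\alpha}$ left to integrate against $G_\alpha$ or $G_\alpha^2$ — the outer $\theta$-integral simply contributes an $O(1)$ constant via \eqref{eq2017-09-05-1}. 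With those corrections the exponent bookkeeping goes through exactly as you describe.
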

\begin{proof}
We  use again the notations from the proof of Lemma \ref{lemma2017-09-15-1}. From \eqref{eq2018-04-30-2} and Burkholder's inequality for Hilbert-space-valued martingales (Lemma \ref{lemma2017-09-05-5}), we have
\begin{align*}
\mbox{E}\left[|A_1|^q\right] &\leq c \sum_{k, l = 1}^d \mbox{E}\left[\left|\int_{s - \epsilon}^td\theta\int_{\mathbb{R}}d\eta \, G_{\alpha}^2(t - \theta, x - \eta)\int_{s - \epsilon}^{s\wedge \theta}dr\int_{\mathbb{R}}dv\left(D_{r, v}^{(k)}(u_l(\theta, \eta))\right)^2\right|^q\right]\\
&\leq A_{11} + A_{12} + A_{13},
\end{align*}
with
\begin{align*}
 A_{11} &:= c \sum_{k, l = 1}^d \mbox{E}\left[\left|\int_{s - \epsilon}^{s}d\theta\int_{\mathbb{R}}d\eta \, G_{\alpha}^2(t - \theta, x - \eta)\int_{s - \epsilon}^{s\wedge \theta}dr\int_{\mathbb{R}}dv\left(D_{r, v}^{(k)}(u_l(\theta, \eta))\right)^2\right|^q\right],\\
 A_{12} &:= c \sum_{k, l = 1}^d \mbox{E}\left[\left|\int_{s}^{s + c_0\epsilon^{\gamma_0}}d\theta\int_{\mathbb{R}}d\eta \, G_{\alpha}^2(t - \theta, x - \eta)\int_{s - \epsilon}^{s\wedge \theta}dr\int_{\mathbb{R}}dv\left(D_{r, v}^{(k)}(u_l(\theta, \eta))\right)^2\right|^q\right],\\
 A_{13} &:= c \sum_{k, l = 1}^d \mbox{E}\left[\left|\int_{s + c_0\epsilon^{\gamma_0}}^td\theta\int_{\mathbb{R}}d\eta \, G_{\alpha}^2(t - \theta, x - \eta)\int_{s - \epsilon}^{s}dr\int_{\mathbb{R}}dv\left(D_{r, v}^{(k)}(u_l(\theta, \eta))\right)^2\right|^q\right],
 \end{align*}
 and from \eqref{eq2017-09-15-14},
 \begin{align*}
\mbox{E}\left[|A_2|^q\right] &\leq c\sum_{k, l = 1}^d \mbox{E}\Bigg[\Big|\int_{s - \epsilon}^td\theta\int_{\mathbb{R}}d\eta  \, G_{\alpha}(t - \theta, x - \eta)\int_{s - \epsilon}^{s \wedge \theta}dr\int_{\mathbb{R}}dv\left(D_{r, v}^{(k)}(u_l(\theta, \eta))\right)^2\Big|^q\Bigg]\\
&\leq A_{21} + A_{22}  + A_{23},
\end{align*}
where $A_{2j}$ is defined in the same way as $A_{1j}$, but with $G_{\alpha}^2$ replaced by $G_{\alpha}$, $j = 1, 2, 3$.

We first bound $\mbox{E}\left[|A_1|^q\right]$. We apply H\"{o}lder's inequality with respect to the measure $G_{\alpha}^2(t - \theta, x - \eta)d\theta d\eta$ to find that
\begin{align}
\mbox{E}\left[|A_{11}|^q\right] &\leq c \sum_{k, l = 1}^d \left(\int_{s - \epsilon}^{s}d\theta\int_{\mathbb{R}}d\eta \, G_{\alpha}^2(t - \theta, x - \eta)\right)^{q - 1}  \nonumber \\
& \quad \times \int_{s - \epsilon}^{s}d\theta\int_{\mathbb{R}}d\eta \, G_{\alpha}^2(t - \theta, x - \eta) \mbox{E}\left[\left|\int_{s - \epsilon}^{ \theta}dr\int_{\mathbb{R}}dv\left(D_{r, v}^{(k)}(u_l(\theta, \eta))\right)^2\right|^q\right]. \nonumber
\end{align}
For $\theta \in [s - \epsilon, s]$, we have $s - \epsilon \geq \theta - \epsilon \geq 0$. Hence by Lemma \ref{lemma2017-09-06-1},
\begin{align} \label{eq2017-09-15-4}
\mbox{E}\left[\left|\int_{s - \epsilon}^{\theta}dr\int_{\mathbb{R}}dv\left(D_{r, v}^{(k)}(u_l(\theta, \eta))\right)^2\right|^q\right] & \leq \mbox{E}\left[\left|\int_{\theta - \epsilon}^{\theta}dr\int_{\mathbb{R}}dv\left(D_{r, v}^{(k)}(u_l(\theta, \eta))\right)^2\right|^q\right] \leq c \epsilon^{\frac{\alpha - 1}{\alpha}q},
\end{align}
where $c \in \, ]0, \infty[$ does not depend on $(\theta, \eta, s, t, \epsilon, x)$.
 Therefore, by \eqref{eq2017-09-05-1},
 \begin{align} \label{eq2017-09-15-2}
\mbox{E}\left[|A_{11}|^q\right] &\leq c \epsilon^{\frac{\alpha - 1}{\alpha}q} \left(\int_{s - \epsilon}^{s}d\theta\int_{\mathbb{R}}d\eta \, G_{\alpha}^2(t - \theta, x - \eta)\right)^{q}  \nonumber \\
& = c \epsilon^{\frac{\alpha - 1}{\alpha}q}\left((t - s + \epsilon)^{(\alpha - 1)/\alpha} - (t - s)^{(\alpha - 1)/\alpha}\right)^q\nonumber \\
& \leq c \epsilon^{\frac{\alpha - 1}{\alpha}q}\epsilon^{(1 - \frac{\gamma_0}{\alpha})q},
\end{align}
 where, in the last inequality, we perform the same calculation as in \eqref{eq2017-09-14-1} under the assumption $t - s > c_0\epsilon^{\gamma_0}$.
Again,we apply H\"{o}lder's inequality with respect to the measure $G_{\alpha}^2(t - \theta, x - \eta)d\theta d\eta$ to find that
\begin{align}
\mbox{E}\left[|A_{12}|^q\right] &\leq c \sum_{k, l = 1}^d \left(\int_{s}^{s + c_0\epsilon^{\gamma_0}}d\theta\int_{\mathbb{R}}d\eta \, G_{\alpha}^2(t - \theta, x - \eta)\right)^{q - 1}  \nonumber \\
& \quad \times \int_{s}^{s + c_0\epsilon^{\gamma_0}}d\theta\int_{\mathbb{R}}d\eta \, G_{\alpha}^2(t - \theta, x - \eta) \mbox{E}\left[\left|\int_{s - \epsilon}^{s}dr\int_{\mathbb{R}}dv\left(D_{r, v}^{(k)}(u_l(\theta, \eta))\right)^2\right|^q\right]. \nonumber
\end{align}
The expectation is bounded as in  \eqref{eq2017-09-15-4} by Lemma \ref{lemma2017-09-06-1}.
Consequently, using in the second inequality below, the fact that $t - s > c_0\epsilon^{\gamma_0}$ and the function $x \mapsto x ^{\frac{\alpha - 1}{\alpha}} - (x - c_0\epsilon^{\gamma_0})^{\frac{\alpha - 1}{\alpha}}$ is decreasing on $[c_0\epsilon^{\gamma_0}, \infty[$,
\begin{align} \label{eq2017-09-15-1}
\mbox{E}\left[|A_{12}|^q\right] &\leq c\left(\int_{s }^{s + c_0\epsilon^{\gamma_0}}d\theta\int_{\mathbb{R}}d\eta \, G_{\alpha}^2(t - \theta, x - \eta)\right)^{q}\epsilon^{\frac{\alpha - 1}{\alpha}q} \nonumber\\
&= c\left((t - s)^{\frac{\alpha - 1}{\alpha}} - (t - s - c_0\epsilon^{\gamma_0})^{\frac{\alpha - 1}{\alpha}}\right)^q\epsilon^{\frac{\alpha - 1}{\alpha}q}\nonumber\\
&\leq c\left((c_0\epsilon^{\gamma_0})^{\frac{\alpha - 1}{\alpha}} - (c_0\epsilon^{\gamma_0} - c_0\epsilon^{\gamma_0})^{\frac{\alpha - 1}{\alpha}}\right)^q\epsilon^{\frac{\alpha - 1}{\alpha}q}\nonumber\\
&= c(c_0\epsilon^{\gamma_0})^{\frac{\alpha - 1}{\alpha}q}\epsilon^{\frac{\alpha - 1}{\alpha}q} = c'\epsilon^{(1 + \gamma_0)\frac{\alpha - 1}{\alpha}q}.
\end{align}

For $A_{13}$, we have, by H\"{o}lder's inequality with respect to the measure $G_{\alpha}^2(t - \theta, x - \eta)d\theta d\eta$,
\begin{align*}
\mbox{E}\left[|A_{13}|^q\right] &\leq c \sum_{k, l = 1}^d \left(\int_{s + c_0\epsilon^{\gamma_0}}^td\theta\int_{\mathbb{R}}d\eta \, G_{\alpha}^2(t - \theta, x - \eta)\right)^{q - 1} \nonumber \\
& \quad \times \int_{s + c_0\epsilon^{\gamma_0}}^td\theta\int_{\mathbb{R}}d\eta \, G_{\alpha}^2(t - \theta, x - \eta) \mbox{E}\left[\left|\int_{s - \epsilon}^{s }dr\int_{\mathbb{R}}dv\left(D_{r, v}^{(k)}(u_l(\theta, \eta))\right)^2\right|^q\right]. \nonumber
\end{align*}
Lemma \ref{lemma2017-09-14-1} implies that for any $\theta \in \, ]s + c_0\epsilon^{\gamma_0}, t[$,
\begin{align*}
\sum_{k, l = 1}^d \mbox{E}\left[\left|\int_{s - \epsilon}^{s }dr\int_{\mathbb{R}}dv\left(D_{r, v}^{(k)}(u_l(\theta, \eta))\right)^2\right|^q\right] \leq c \epsilon^{(1 - \frac{\gamma_0}{\alpha})q},
\end{align*}
where $c \in \, ]0, \infty[$ does not depend on $(\theta, \eta, s, t, \epsilon, x)$. Thus,  by \eqref{eq2017-09-05-1},
\begin{align}\label{eq2017-09-15-3}
\mbox{E}\left[|A_{13}|^q\right] &\leq c\left(\int_{s + c_0\epsilon^{\gamma_0}}^td\theta\int_{\mathbb{R}}d\eta \, G_{\alpha}^2(t - \theta, x - \eta)\right)^{q}\epsilon^{(1 - \frac{\gamma_0}{\alpha})q} \nonumber\\
&= c(t - s - c_0\epsilon^{\gamma_0})^{\frac{\alpha - 1}{\alpha}q}\epsilon^{(1 - \frac{\gamma_0}{\alpha})q} \leq c'\epsilon^{(1 - \frac{\gamma_0}{\alpha})q}.
\end{align}

We proceed to derive a similar bound for $\mbox{E}\left[|A_2|^q\right]$.
We apply H\"{o}lder's inequality with respect to the measure $G_{\alpha}(t - \theta, x - \eta)d\theta d\eta$ to find that
\begin{align}\label{eq2017-09-15-6}
\mbox{E}\left[|A_{21}|^q\right] &\leq c\sum_{k, l = 1}^d\left|\int_{s - \epsilon}^{s}d\theta\int_{\mathbb{R}}d\eta \, G_{\alpha}(t - \theta, x - \eta)\right|^{q - 1}\nonumber\\
& \quad  \times  \int_{s - \epsilon}^{s }d\theta\int_{\mathbb{R}}d\eta \, G_{\alpha}(t - \theta, x - \eta)\mbox{E}\left[\left|\int_{s - \epsilon}^{\theta}dr\int_{\mathbb{R}}dv\left(D_{r, v}^{(k)}(u_l(\theta, \eta))\right)^2\right|^q\right] \nonumber \\
 &\leq c\left|\int_{s - \epsilon}^{s}d\theta\int_{\mathbb{R}}d\eta \, G_{\alpha}(t - \theta, x - \eta)\right|^{q}\epsilon^{\frac{\alpha - 1}{\alpha}q} =  c\epsilon^{(\frac{\alpha - 1}{\alpha} + 1)q},
\end{align}
where in the second inequality we use (\ref{eq2017-09-15-4}).
Similarly, we apply H\"{o}lder's inequality with respect to the measure $G_{\alpha}(t - \theta, x - \eta)d\theta d\eta$ to find that
\begin{align}\label{eq2017-09-15-7}
\mbox{E}\left[|A_{22}|^q\right] &\leq c\sum_{k, l = 1}^d\left|\int_{s}^{s + c_0\epsilon^{\gamma_0}}d\theta\int_{\mathbb{R}}d\eta \, G_{\alpha}(t - \theta, x - \eta)\right|^{q - 1} \nonumber\\
& \quad \times \int_{s}^{s + c_0\epsilon^{\gamma_0}}d\theta\int_{\mathbb{R}}d\eta \, G_{\alpha}(t - \theta, x - \eta)\mbox{E}\left[\left|\int_{s - \epsilon}^{s}dr\int_{\mathbb{R}}dv\left(D_{r, v}^{(k)}(u_l(\theta, \eta))\right)^2\right|^q\right] \nonumber\\
 &\leq c\left|\int_{s}^{s + c_0\epsilon^{\gamma_0}}d\theta\int_{\mathbb{R}}d\eta \, G_{\alpha}(t - \theta, x - \eta)\right|^{q}\epsilon^{\frac{\alpha - 1}{\alpha}q}\nonumber\\
 &= c(c_0\epsilon^{\gamma_0})^{q}\epsilon^{\frac{\alpha - 1}{\alpha}q} =  c'\epsilon^{(\frac{\alpha - 1}{\alpha} + \gamma_0)q},
\end{align}
where in the second inequality we use Lemma \ref{lemma2017-09-06-1}. For the last term, we use H\"{o}lder's inequality with respect to the measure $G_{\alpha}(t - \theta, x - \eta)d\theta d\eta$ to see that
\begin{align}\label{eq2017-09-15-5}
\mbox{E}\left[|A_{23}|^q\right] &\leq c\sum_{k, l = 1}^d\left|\int_{s + c_0\epsilon^{\gamma_0}}^td\theta\int_{\mathbb{R}}d\eta \, G_{\alpha}(t - \theta, x - \eta)\right|^{q - 1} \nonumber\\
& \quad \times \int_{s + c_0\epsilon^{\gamma_0}}^td\theta\int_{\mathbb{R}}d\eta \, G_{\alpha}(t - \theta, x - \eta)\mbox{E}\left[\left|\int_{s - \epsilon}^{s}dr\int_{\mathbb{R}}dv\left(D_{r, v}^{(k)}(u_l(\theta, \eta))\right)^2\right|^q\right]\nonumber \\
 &\leq c\left|\int_{s + c_0\epsilon^{\gamma_0}}^td\theta\int_{\mathbb{R}}d\eta \, G_{\alpha}(t - \theta, x - \eta)\right|^{q}\epsilon^{(1 - \frac{\gamma_0}{\alpha})q}\nonumber\\
 &= c(t - s - c_0\epsilon^{\gamma_0})^{q}\epsilon^{(1 - \frac{\gamma_0}{\alpha})q}\leq c'\epsilon^{(1 - \frac{\gamma_0}{\alpha})q},
\end{align}
where in the second inequality we use Lemma \ref{lemma2017-09-14-1}.

Finally, from \eqref{eq2017-09-15-2}, \eqref{eq2017-09-15-1}, \eqref{eq2017-09-15-3}, \eqref{eq2017-09-15-6}, \eqref{eq2017-09-15-7} and \eqref{eq2017-09-15-5}, together with the choice of $\gamma_0$, we obtain the desired result.
\end{proof}

\begin{remark}\label{remark2017-11-21-3}
The result of Lemma \ref{lemma2017-09-19-1} is also true for solutions of stochastic heat equations with Neumann or Dirichlet boundary conditions since we can still apply the result of Lemma \ref{lemma2017-09-14-1}; see Remark \ref{remark2017-11-21-2}.
\end{remark}

\subsection{Study of the Malliavin matrix}

Fix $T > 0$. For $s, t \in [0, T], s \leq t$, and $x, y \in \mathbb{R}$, consider the $2d$-dimensional random vector
\begin{align}\label{eq2017-09-15-15}
Z &:= (u(s, y), u(t, x) - u(s, y)).
\end{align}
Let $\gamma_Z$ be the Malliavin matrix of $Z$. Note that
$\gamma_Z = ((\gamma_Z)_{m, l})_{m, l = 1, \ldots, 2d}$ is a symmetric $2d \times 2d$ random matrix with four $d \times d$ blocs of the form
\begin{align*}
\gamma_Z = \left(
  \begin{array}{ccc}
    \gamma_Z^{(1)} & \vdots  & \gamma_Z^{(2)} \\
    \cdots  & \vdots  & \cdots  \\
    \gamma_Z^{(3)} &  \vdots & \gamma_Z^{(4)} \\
  \end{array}
\right)
\end{align*}
where
\begin{align*}
\gamma_Z^{(1)} &= \left(\left\langle D(u_i(s, y)), D(u_j(s, y))\right\rangle_{\mathscr{H}}\right)_{i, j = 1, \dots, d},\\
\gamma_Z^{(2)} &=   \left(\left\langle D(u_i(s, y)), D(u_j(t, x)- u_j(s, y))\right\rangle_{\mathscr{H}}\right)_{i, j = 1, \dots, d},\\
\gamma_Z^{(3)} &=  \left(\left\langle D(u_i(t, x)- u_i(s, y)), D(u_j(s, y))\right\rangle_{\mathscr{H}}\right)_{i, j = 1, \dots, d},\\
\gamma_Z^{(4)} &=  \left(\left\langle D(u_i(t, x) - u_i(s, y)), D(u_j(t, x)- u_j(s, y))\right\rangle_{\mathscr{H}}\right)_{i, j = 1, \dots, d}.
\end{align*}
We let ($\mathbf{1}$) denote the couples of $\{1, \ldots, d\} \times \{1, \ldots, d\}$, ($\mathbf{2}$) denote the couples of $\{1, \ldots, d\} \times \{d + 1, \ldots, 2d\}$, ($\mathbf{3}$) denote the couples of $\{d + 1, \ldots, 2d\} \times \{1, \ldots, d\}$ and ($\mathbf{4}$) denote the couples of $\{d + 1, \ldots, 2d\} \times \{d + 1, \ldots, 2d\}$.

The next two results follow exactly along the same lines as \cite[Propositions 6.5 and 6.7]{DKN09} using (\ref{eq2017-09-05-4}) and Proposition \ref{prop2017-09-13-1}, with $\Delta$ there replaced by $\Delta^2_{\alpha}$. We omit the proofs.

\begin{prop}\label{prop2017-09-19-1}
Fix $T > 0$ and let $I$ and $J$ be compact intervals as in Theorem \ref{theorem2017-08-17-1}. Let $A_Z$ denote the cofactor matrix of $\gamma_Z$. Assuming \textbf{P1'}, for any $(s, y), (t, x) \in I \times J, (s, y) \neq (t, x), p > 1$,
\begin{align*}
\mbox{E}\left[|(A_Z)_{m, l}|^p\right]^{1/p}
 \leq
\left\{\begin{array}{ll}
    c_{p, T} (|t - s|^{\frac{\alpha - 1}{\alpha}} + |x - y|^{\alpha - 1})^d & \hbox{if $(m, l) \in (\mathbf{1})$,} \\
    c_{p, T} (|t - s|^{\frac{\alpha - 1}{\alpha}} + |x - y|^{\alpha - 1})^{d - \frac{1}{2}}   & \hbox{if $(m, l) \in (\mathbf{2})$ or $(\mathbf{3})$,} \\
    c_{p, T} (|t - s|^{\frac{\alpha - 1}{\alpha}} + |x - y|^{\alpha - 1})^{d - 1} & \hbox{if $(m, l) \in (\mathbf{4})$.}
  \end{array}
\right.
\end{align*}
\end{prop}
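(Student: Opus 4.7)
The plan is to mirror the argument of \cite[Proposition~6.5]{DKN09}, with the squared parabolic scale $\Delta^2_\alpha \asymp |t-s|^{(\alpha-1)/\alpha} + |x-y|^{\alpha-1}$ playing the role of $\Delta$ in \cite{DKN09}. Each cofactor $(A_Z)_{m,l}$ is, up to a sign, the determinant of the $(2d-1)\times(2d-1)$ minor obtained by deleting row $l$ and column $m$ from $\gamma_Z$. Expanding it by the Leibniz formula and applying H\"older's inequality in the form $\|X_1\cdots X_{2d-1}\|_p \leq \prod_i \|X_i\|_{p(2d-1)}$ reduces the task to bounding individual entries of $\gamma_Z$ in $L^q$ for arbitrarily large $q$.

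The four blocs of $\gamma_Z$ are controlled by the Cauchy--Schwarz inequality in $\mathscr{H}$ combined with the moment bounds on Malliavin derivatives. From \eqref{eq2017-09-05-4}, entries in bloc $(\mathbf{1})$ have bounded $L^q$-norms. Using \eqref{eq2017-09-05-4} together with Proposition \ref{prop2017-09-13-1}, entries in blocs $(\mathbf{2})$ and $(\mathbf{3})$ have $L^q$-norms of order $(|t-s|^{(\alpha-1)/\alpha} + |x-y|^{\alpha-1})^{1/2}$, while entries in bloc $(\mathbf{4})$ have $L^q$-norms of order $|t-s|^{(\alpha-1)/\alpha} + |x-y|^{\alpha-1}$.

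It remains to verify that the combinatorial bookkeeping yields the stated exponents uniformly across permutations. Fix $(m,l)\in(\mathbf{1})$. The minor has $d-1$ rows in the ``top half'', $d$ rows in the ``bottom half'', and similarly $d-1$ ``left'' and $d$ ``right'' columns. For any bijection $\sigma$ appearing in the Leibniz expansion, let $k$ denote the number of top rows sent to left columns; a direct count then shows that $\sigma$ picks $k$, $d-1-k$, $d-1-k$, $k+1$ factors from blocs $(\mathbf{1})$, $(\mathbf{2})$, $(\mathbf{3})$, $(\mathbf{4})$ respectively. The resulting exponent of $|t-s|^{(\alpha-1)/\alpha} + |x-y|^{\alpha-1}$ is
\[
\tfrac{d-1-k}{2} + \tfrac{d-1-k}{2} + (k+1) = d,
\]
independent of $\sigma$. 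The analogous counts for $(m,l)\in(\mathbf{2})\cup(\mathbf{3})$ and $(m,l)\in(\mathbf{4})$ produce the exponents $d-\tfrac12$ and $d-1$ respectively, again independent of the permutation.

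The only real obstacle is this combinatorial accounting: one must check that every permutation in the Leibniz expansion delivers \emph{the same} leading exponent in $|t-s|^{(\alpha-1)/\alpha} + |x-y|^{\alpha-1}$, which is precisely what makes the bounds sharp. Once this is settled, summing over the $(2d-1)!$ permutations and invoking H\"older's inequality at an arbitrarily large power produces the stated estimate with a constant $c_{p,T}$ uniform over $(s,y), (t,x) \in I \times J$ with $(s,y) \neq (t,x)$.
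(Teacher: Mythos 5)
Your proposal is correct and follows essentially the same route as the paper, which simply states that Proposition~\ref{prop2017-09-19-1} "follows exactly along the same lines as \cite[Proposition~6.5]{DKN09} using \eqref{eq2017-09-05-4} and Proposition~\ref{prop2017-09-13-1}, with $\Delta$ there replaced by $\Delta^2_\alpha$" and omits the details. Your combinatorial bookkeeping (counting, for each permutation, how many factors come from each of the four blocs and checking the total exponent is permutation-independent) is correct in all three cases — for $(m,l)\in(\mathbf{1})$ one gets $k+(d-1-k)+(d-1-k)+(k+1)$ factors contributing exponents $0,\tfrac12,\tfrac12,1$ respectively, summing to $d$; for $(\mathbf{2})$/$(\mathbf{3})$ the corresponding count gives $d-\tfrac12$; and for $(\mathbf{4})$ it gives $d-1$ — so you have supplied exactly the verification the paper leaves to the reader.
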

\begin{prop}\label{prop2017-09-19-2}
Fix $T > 0$ and let $I$ and $J$ be compact intervals as in Theorem \ref{theorem2017-08-17-1}. Assuming \textbf{P1'}, for any $(s, y), (t, x) \in I \times J, (s, y) \neq (t, x), p > 1$,
\begin{align*}
\mbox{E}\left[\|D^k(\gamma_Z)_{m, l}\|_{\mathscr{H}^{\otimes k}}^p\right]^{1/p}
 \leq
\left\{\begin{array}{ll}
    c_{k, p, T} & \hbox{if $(m, l) \in (\mathbf{1})$,} \\
    c_{k, p, T} (|t - s|^{\frac{\alpha - 1}{\alpha}} + |x - y|^{\alpha - 1})^{\frac{1}{2}}   & \hbox{if $(m, l) \in (\mathbf{2})$ or $(\mathbf{3})$,} \\
    c_{k, p, T} (|t - s|^{\frac{\alpha - 1}{\alpha}} + |x - y|^{\alpha - 1}) & \hbox{if $(m, l) \in (\mathbf{4})$.}
  \end{array}
\right.
\end{align*}
\end{prop}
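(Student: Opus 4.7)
The plan is to follow the strategy of \cite[Proposition 6.7]{DKN09}, simply replacing the classical parabolic metric $\Delta$ by our fractional parabolic metric $\Delta_{\alpha}^2$. Set $U_i^{(1)} = u_i(s,y)$ and $U_i^{(2)} = u_i(t,x) - u_i(s,y)$. Each entry of $\gamma_Z$ then has the form $\langle DU_i^{(a)}, DU_j^{(b)}\rangle_{\mathscr{H}}$ with $a,b \in \{1,2\}$ determined by the block: $(\mathbf{1}) \leftrightarrow (a,b)=(1,1)$, $(\mathbf{4}) \leftrightarrow (a,b)=(2,2)$, and $(\mathbf{2})$ or $(\mathbf{3}) \leftrightarrow$ the two mixed pairs.

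Iterating the Leibniz rule for the Malliavin derivative of an $\mathscr{H}$-inner product, for every $k \geq 0$ one obtains a representation of the form
\begin{equation*}
D^{k}\langle DU_i^{(a)}, DU_j^{(b)}\rangle_{\mathscr{H}} = \sum_{h=0}^{k}\binom{k}{h}\, C_{h,k-h}\bigl(D^{h+1}U_i^{(a)},\, D^{k-h+1}U_j^{(b)}\bigr),
\end{equation*}
where $C_{h,k-h}$ denotes the contraction pairing the Malliavin variable common to $DU_i^{(a)}$ and $DU_j^{(b)}$. Taking the $\mathscr{H}^{\otimes k}$-norm and applying Cauchy-Schwarz inside this contraction yields
\begin{equation*}
\|D^{k}(\gamma_Z)_{m,l}\|_{\mathscr{H}^{\otimes k}} \leq c_{k}\sum_{h=0}^{k}\|D^{h+1}U_i^{(a)}\|_{\mathscr{H}^{\otimes(h+2)}}\|D^{k-h+1}U_j^{(b)}\|_{\mathscr{H}^{\otimes(k-h+2)}}.
\end{equation*}

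Taking $L^{p}$-norms and applying Cauchy-Schwarz once more reduces the problem to controlling $\mathrm{E}\bigl[\|D^{m}U_i^{(a)}\|_{\mathscr{H}^{\otimes m}}^{2p}\bigr]^{1/(2p)}$ for arbitrary $m \geq 1$ and $a \in \{1,2\}$. For $a = 1$, the bound \eqref{eq2017-09-05-4} provides a finite constant that is uniform in $(s,y) \in I \times J$. For $a = 2$, Proposition \ref{prop2017-09-13-1} contributes exactly one factor of $\bigl(\Delta_{\alpha}^2((t,x);(s,y))\bigr)^{1/2}$. Counting the number of increment-type factors in each block produces: no $\Delta_{\alpha}^{2}$-factor in block $(\mathbf{1})$, a single $\Delta_{\alpha}^{2}$-factor of exponent $1/2$ in blocks $(\mathbf{2})$ and $(\mathbf{3})$, and two such factors combining to give $\Delta_{\alpha}^{2}$ to the first power in block $(\mathbf{4})$, which matches the three cases in the statement.

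The only technical nuisance, handled identically as in \cite{DKN09}, is keeping track of the tensor contractions when iterating the Leibniz rule and verifying that Cauchy-Schwarz performed in the contracted variable produces a bound expressed as a product of $\mathscr{H}^{\otimes\cdot}$-norms of iterated Malliavin derivatives. All genuine analytical work has already been done in Proposition \ref{prop2017-09-13-1}; the present proposition then follows from this combinatorial bookkeeping together with \eqref{eq2017-09-05-4}, exactly as in \cite[Proof of Proposition 6.7]{DKN09}.
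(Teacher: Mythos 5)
Your proposal is correct and follows exactly the approach the paper takes (and omits): the paper simply cites \cite[Proposition 6.7]{DKN09}, indicating that one uses \eqref{eq2017-09-05-4} and Proposition \ref{prop2017-09-13-1} with $\Delta$ replaced by $\Delta_{\alpha}^2$, and you have spelled out the Leibniz-rule decomposition, the Cauchy--Schwarz contractions, and the counting of increment factors per block that this entails. The argument is sound and fills in precisely the omitted details.
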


The main technical effort in this subsection is the proof of the following proposition, which improves \cite[Proposition 6.6(a)]{DKN09} and is why the $\eta$ can be removed in the lower bound on hitting probabilities.
\begin{prop}\label{prop2017-09-19-3}
Fix $T > 0$ and let $I$ and $J$ be compact intervals as in Theorem \ref{theorem2017-08-17-1}. Assume \textbf{P1'} and \textbf{P2}. There exists $C$ depending on $T$ such that for any $(s, y), (t, x) \in I \times J, (s, y) \neq (t, x), p > 1$,
\begin{align}\label{eq2017-09-19-1}
\mbox{E}\left[(\det \gamma_Z)^{-p}\right]^{1/p} \leq C (|t - s|^{\frac{\alpha - 1}{\alpha}} + |x - y|^{\alpha - 1})^{-d}.
\end{align}
\end{prop}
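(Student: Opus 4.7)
The strategy is to follow that of \cite[Proposition 6.6]{DKN09}, but to replace the use of the weaker estimate \cite[Lemma 6.11]{DKN09} (which corresponds to Lemma \ref{lemma2017-09-15-1}) by the sharper Lemma \ref{lemma2017-09-19-1}. This substitution is exactly the point at which the extra factor $\eta$ of \cite[Proposition 6.6]{DKN09} can be dispensed with. Throughout, assume $s \leq t$ and write $\Delta := |t-s|^{(\alpha-1)/\alpha} + |x-y|^{\alpha-1}$, which is comparable to $(\Delta_\alpha((t,x);(s,y)))^2$.

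The first step is to apply the Schur complement. Since $\gamma_Z^{(1)}$ is a.s.\ invertible by Proposition \ref{prop2017-09-05-1}, one can write $\det \gamma_Z = \det(\gamma_Z^{(1)}) \cdot \det(M_Z)$, where $M_Z := \gamma_Z^{(4)} - \gamma_Z^{(3)} (\gamma_Z^{(1)})^{-1} \gamma_Z^{(2)}$. Proposition \ref{prop2017-09-05-1} applied to $u(s,y)$ gives $\sup_{(s,y) \in I \times J}\mbox{E}[(\det \gamma_Z^{(1)})^{-2p}] < \infty$; combined with the Cauchy--Schwarz inequality, this reduces \eqref{eq2017-09-19-1} to the bound $\mbox{E}[(\det M_Z)^{-2p}]^{1/(2p)} \leq C \Delta^{-d}$. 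Next, one uses the standard identity
\begin{align*}
\zeta^T M_Z \zeta &= \min_{\eta \in \mathbb{R}^d}(\eta,\zeta)^T \gamma_Z (\eta,\zeta) \\
&= \min_{\eta \in \mathbb{R}^d}\int_0^T\!dr\int_\mathbb{R}\! dv\,\Big\|\sum_{i=1}^d \eta_i D_{r,v} u_i(s,y)+\sum_{i=1}^d \zeta_i D_{r,v}(u_i(t,x)-u_i(s,y))\Big\|^2,
\end{align*}
valid for all $\zeta \in \mathbb{R}^d$ whenever $\gamma_Z^{(1)}$ is positive definite. Since $\det M_Z \geq (\inf_{\|\zeta\|=1}\zeta^T M_Z \zeta)^d$, matters reduce to establishing the small-ball estimate
$\mbox{P}\{\inf_{\|\zeta\|=1}\zeta^T M_Z \zeta < \epsilon \Delta\} \leq C_q\,\epsilon^q$
for every $q \geq 1$ and $\epsilon \in (0,\epsilon_0]$; \cite[Proposition 3.5]{DKN09}, applied to $Z := \inf_{\|\zeta\|=1}\zeta^T M_Z \zeta$, then delivers the required moment bound.

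The small-ball estimate splits into two cases. In Case~(a), where $|t-s|^{(\alpha-1)/\alpha} \geq \Delta/2$, the identity $D_{r,v}u_i(s,y)=0$ for $r>s$ allows one to restrict the integral above to $[s,t] \times \mathbb{R}$, eliminating the dependence on $\eta$:
$\zeta^T M_Z \zeta \geq \int_s^t \int_\mathbb{R} \|\sum_i \zeta_i D_{r,v} u_i(t,x)\|^2 dv\,dr$.
From here one proceeds as in the proof of Proposition \ref{prop2017-09-05-1}, using \eqref{eq2017-09-06-2}, hypothesis \textbf{P2} and \eqref{eq2017-09-05-1} to extract a principal term of order $(t-s)^{(\alpha-1)/\alpha} \asymp \Delta$, and controlling the error from the $a_i$ via Lemma \ref{lemma2017-09-19-1} with appropriately chosen $\gamma_0$. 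In Case~(b), where $|x-y|^{\alpha-1} \geq \Delta/2$, choose $\epsilon_* \asymp |x-y|^{\alpha}$ so that $\epsilon_*^{(\alpha-1)/\alpha} \asymp \Delta$, restrict the integration to $[s-\epsilon_*, s] \times \mathbb{R}$, and expand
$D_{r,v}(u_i(t,x) - u_i(s,y)) = g_\alpha(r,v)\sigma_{ik}(u(r,v)) + [a_i(k,r,v,t,x) - a_i(k,r,v,s,y)]$,
with $g_\alpha$ as in Lemma \ref{lemma2017-09-13-1}. For fixed $\zeta$ the quadratic form in $\eta$ is explicit and quadratic-elliptic; minimizing and using \textbf{P2} together with Lemma \ref{lemma2017-09-13-1} bounds the minimum below by a multiple of the $L^2$-norm squared of the ``residual'' kernel $g_\alpha\cdot\zeta$, which is of order $|x-y|^{\alpha-1} \asymp \Delta$, while Lemma \ref{lemma2017-09-19-1} again controls the resulting $a_i$ error terms.

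The main obstacle is Case~(b): one must show that, after minimization in $\eta$, enough of the kernel difference $G_\alpha(t-r,x-v) - G_\alpha(s-r,y-v)$ survives to yield the required $|x-y|^{\alpha-1}$ lower bound, rather than merely a bound driven by $(t-s)^{(\alpha-1)/\alpha}$. Balancing the algebraic cross terms on $[s-\epsilon_*,s]\times\mathbb{R}$ against the error estimates is the technical heart of the argument, and it is precisely the sharpened exponent of Lemma \ref{lemma2017-09-19-1} relative to Lemma \ref{lemma2017-09-15-1} that makes this balance close without an extra factor of $\eta$.
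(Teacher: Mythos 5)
Your Schur-complement factorization $\det\gamma_Z = \det\gamma_Z^{(1)}\cdot\det M_Z$, combined with the variational identity $\zeta^T M_Z\zeta = \min_{\eta\in\mathbb{R}^d}(\eta,\zeta)^T\gamma_Z(\eta,\zeta)$, is a genuinely different decomposition from the paper's. The paper writes $\det\gamma_Z=\prod_{i=1}^{2d}(\xi^i)^T\gamma_Z\xi^i$ over an eigenbasis of $\gamma_Z$, invokes \cite[Lemma 6.8]{DKN09} to isolate $d$ eigenvectors with a uniformly large projection on the $u(s,y)$ block, and then needs two separate estimates: Proposition \ref{prop2017-09-19-5} ($O(1)$ contributions from those eigenvectors) and Proposition \ref{prop2017-09-19-4} ($\Delta$-sized contributions from the rest). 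In your factorization the first job disappears: $\gamma_Z^{(1)}$ is exactly the Malliavin matrix of $u(s,y)$, so Proposition \ref{prop2017-09-05-1} already gives $\sup_{(s,y)\in I\times J}\mbox{E}[(\det\gamma_Z^{(1)})^{-2p}]<\infty$ with no perturbation argument. This is a genuine structural simplification.

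What you do not notice is that the route can now be closed in one line using what the paper already supplies. Since $(\eta,\zeta)^T\gamma_Z(\eta,\zeta)\geq\bigl(\inf_{\|\xi\|=1}\xi^T\gamma_Z\xi\bigr)(\|\eta\|^2+\|\zeta\|^2)\geq\bigl(\inf_{\|\xi\|=1}\xi^T\gamma_Z\xi\bigr)\|\zeta\|^2$, one has $\inf_{\|\zeta\|=1}\zeta^T M_Z\zeta\geq\inf_{\|\xi\|=1}\xi^T\gamma_Z\xi$, hence $\det M_Z\geq\bigl(\inf_{\|\xi\|=1}\xi^T\gamma_Z\xi\bigr)^d$. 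Cauchy--Schwarz together with Propositions \ref{prop2017-09-05-1} and \ref{prop2017-09-19-4} then gives \eqref{eq2017-09-19-1} immediately. Instead you re-derive the small-ball estimate for $\inf_{\|\zeta\|=1}\zeta^T M_Z\zeta$ from scratch in your Cases (a) and (b), and that is where the gaps lie.

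Case (a) is essentially fine: for $r\in\,]s,t]$ the term $D_{r,v}u_i(s,y)$ vanishes, the $\eta$-dependence drops out, and the estimate reduces to the one-point analysis of Proposition \ref{prop2017-09-05-1} on $[t-\epsilon',t]$; the $a_i(k,\cdot,\cdot,t,x)$ errors are controlled by Lemma \ref{lemma2017-09-15-1} with $s:=t$, and Lemma \ref{lemma2017-09-19-1} is not actually needed here, contrary to your remark. Case (b) has two real gaps. First, the minimization is over \emph{unconstrained} $\eta\in\mathbb{R}^d$: once the $a_i$ error terms in $D_{r,v}u_i(s,y)$ are peeled off, the error involves $\sum_i\eta_i a_i(k,r,v,s,y)$ with no a priori bound on $\|\eta\|$, and Lemmas \ref{lemma2017-09-15-1}, \ref{lemma2017-09-19-1} only control these sums for bounded coefficients. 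The paper avoids this entirely because $\xi=(\lambda,\mu)$ runs over the unit sphere. You would need a separate a priori bound on the minimizer $\eta^*$, which in turn requires a quantitative lower bound on $\gamma_Z^{(1)}$ restricted to $[s-\epsilon_*,s]\times\mathbb{R}$; this is doable but is absent. Second, and more seriously, after minimizing out $\eta$ the Green-kernel contribution reduces to the Gram quotient $\|H\|^2-\langle K,H\rangle^2/\|K\|^2$ on $L^2([s-\epsilon_*,s]\times\mathbb{R})$, with $K=G_\alpha(s-\cdot,y-\cdot)$ and $H=G_\alpha(t-\cdot,x-\cdot)$. To make this $\gtrsim|x-y|^{\alpha-1}$ one must show $\langle K,H\rangle^2\leq(1-\delta)\|K\|^2\|H\|^2$ for some $\delta>0$; this is exactly the paper's estimate on the cross term $B_1^{(3)}$ in \eqref{eq2018-07-09-3}, obtained by shrinking $\epsilon_*$ to $(2(1+\beta))^{-\alpha}|x-y|^\alpha$ so that the $(1+\beta)^{-1}$ factors in \eqref{eq2017-11-19-1} and \eqref{eq2017-11-19-2} close the balance. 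Your appeal to Lemma \ref{lemma2017-09-13-1} at this point does not help: that lemma is an \emph{upper} bound on $\|g_\alpha\|^2$ and carries no information about the angular separation of $K$ and $H$. Without that cross-correlation estimate, Case (b) does not go through, and the cleanest fix is the one-line reduction to Proposition \ref{prop2017-09-19-4} noted above.
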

\begin{proof}
The proof has the same structure as that of \cite[Proposition 6.6]{DKN09}; see also \cite[Propositon 5.5]{DKN13}. We write
\begin{align}\label{eq2017-09-19-2}
\det \gamma_Z = \prod_{i = 1}^{2d}\big(\xi^i\big)^T\gamma_Z\xi^i,
\end{align}
where $\xi = \{\xi^1, \ldots, \xi^{2d}\}$ is an orthogonal basis of $\mathbb{R}^{2d}$ consisting of eigenvectors of $\gamma_Z$.

We use the perturbation argument of \cite[Proposition 6.6]{DKN09}. Let $\mathbf{0} \in \mathbb{R}^d$. Consider the spaces $E_1 = \{(\lambda, \mathbf{0}): \lambda \in \mathbb{R}^d\}$ and $E_2 = \{(\mathbf{0}, \mu): \mu \in \mathbb{R}^d\}$. Each $\xi^i$ can be written
\begin{align}\label{eq2017-09-19-3}
\xi^i = (\lambda^i, \mu^i) = \beta_i(\tilde{\lambda}^i, \mathbf{0}) + \sqrt{1 - \beta_i^2}(\mathbf{0}, \tilde{\mu}^i),
\end{align}
where $\lambda^i, \mu^i \in \mathbf{R}^d, (\tilde{\lambda}^i, \mathbf{0}) \in E_1, (\mathbf{0}, \tilde{\mu}^i) \in E_2$, with $\|\tilde{\lambda}^i\| = \|\tilde{\mu}^i\| = 1$ and $0 \leq \beta_i \leq 1$. In particular, $\|\xi^i\|^2 = \|\lambda^i\|^2 + \|\mu^i\|^2 = 1$.

For a fixed small $\beta_0$, the result of \cite[Lemma 6.8]{DKN09} gives us at least $d$ eigenvectors $\xi^1, \ldots, \xi^d$ satisfying $\beta_i \geq \beta_0, i = 1, \ldots, d$, which we say have a "large projection on $E_1$". We will show that these will contribute a factor of order $1$ to the product in (\ref{eq2017-09-19-2}). The at most $d$ other eigenvectors will each contribute a factor of order $|t - s|^{\frac{\alpha - 1}{\alpha}} + |x - y|^{\alpha - 1}$, which we say have a "small projection on $E_1$".

Hence, by \cite[Lemma 6.8]{DKN09} and the Cauchy-Schwarz inequality, we can write
\begin{align}\label{eq2017-09-19-4}
\mbox{E}\left[(\det \gamma_Z)^{-p}\right]^{1/p} &\leq \sum_{K \subset \{1, \ldots, 2d\}, |K| = d}\Bigg(\mbox{E}\Bigg[\mathbf{1}_{A_K}\bigg(\prod_{i \in K}(\xi^i)^T\gamma_Z\xi^i\bigg)^{-2p}\Bigg]\Bigg)^{1/(2p)} \nonumber \\
& \quad \times \Bigg(\mbox{E}\Bigg[\bigg(\inf_{\scriptsize\begin{array}{c}  \xi = (\lambda, \mu) \in \mathbb{R}^{2d}:\\ \|\lambda\|^2 + \|\mu\|^2 = 1 \end{array}}\xi^T\gamma_Z\xi\bigg)^{-2dp}\Bigg]\Bigg)^{1/(2p)},
\end{align}
where $A_K = \cap_{i \in K}\{\beta_i \geq \beta_0\}$.

With this, Propositions \ref{prop2017-09-19-4} and \ref{prop2017-09-19-5} below will conclude the proof of Proposition \ref{prop2017-09-19-3}.
\end{proof}

\begin{remark}\label{remark2017-11-21-5}
As a consequence of Remark \ref{remark2017-11-21-4}, we see that the result of Proposition \ref{prop2017-09-19-3} is also true for the solutions of stochastic heat equations with Neumann or Dirichlet boundary conditions.
\end{remark}

\begin{prop}\label{prop2017-09-19-4}
Fix $T > 0$. Assume \textbf{P1'} and \textbf{P2}. There exists $C$ depending on $T$ such that for all $s, t \in I, 0 \leq t - s < 1, x, y \in J, (s, y) \neq (t, x)$, and $p > 1$,
      \begin{align}\label{eq2017-09-19-5}
      \mbox{E}\Bigg[\bigg(\inf_{\scriptsize\begin{array}{c} \xi = (\lambda, \mu) \in \mathbb{R}^{2d}:\\ \|\lambda\|^2 + \|\mu\|^2 = 1 \end{array}}\xi^T\gamma_Z\xi\bigg)^{-2dp}\Bigg] &\leq C(|t - s|^{\frac{\alpha - 1}{\alpha}} + |x - y|^{\alpha - 1})^{-2dp}.
            \end{align}
\end{prop}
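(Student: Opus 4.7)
The plan is to adapt the approach of \cite[Proposition 6.9]{DKN09} (see also \cite[Proposition 5.7]{DKN13}) to the fractional case and, crucially, to invoke Lemma \ref{lemma2017-09-19-1} in place of Lemma \ref{lemma2017-09-15-1} so as to remove the extra $\eta$ that appears in those works. For a unit vector $\xi=(\lambda,\mu)\in\mathbb{R}^{2d}$, expand
\begin{align*}
\xi^T\gamma_Z\xi = \sum_{k=1}^d\int_0^T\!\!\int_{\mathbb{R}}\Big[\sum_i \lambda_iD_{r,v}^{(k)}u_i(s,y)+\sum_i\mu_iD_{r,v}^{(k)}(u_i(t,x)-u_i(s,y))\Big]^2 dv\,dr.
\end{align*}
Without loss of generality, assume $t\geq s$, and introduce a small parameter $\epsilon>0$ and an auxiliary $\gamma_0\in(0,1)$ (both to be fixed at the end). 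Restricting the integration to $r\in[s-\epsilon,s]$ and using \eqref{eq2017-09-05-3}, decompose the integrand as $f_\xi^{(k)}(r,v)=P^{(k)}(r,v)+Q^{(k)}(r,v)$, where $P^{(k)}(r,v)=\sum_i\bigl[\lambda_iG_\alpha(s-r,y-v)+\mu_i(G_\alpha(t-r,x-v)-G_\alpha(s-r,y-v))\bigr]\sigma_{ik}(u(r,v))$ is the principal Green-kernel contribution and $Q^{(k)}$ gathers the remainder involving $a_i(k,r,v,s,y)$ and $a_i(k,r,v,t,x)$. The elementary inequality $(a+b)^2\geq\tfrac{2}{3}a^2-2b^2$ combined with hypothesis \textbf{P2} yields
\begin{align*}
\xi^T\gamma_Z\xi \geq \frac{2\rho^2}{3}\Phi_\epsilon(\lambda,\mu) - 2R_\xi,
\end{align*}
where $\Phi_\epsilon$ is the Gaussian-type quadratic form in $(\lambda_i,\mu_i)$ having per-coordinate coefficient matrix $\bigl(\begin{smallmatrix}A_\epsilon & B_\epsilon-A_\epsilon \\ B_\epsilon-A_\epsilon & A_\epsilon-2B_\epsilon+C_\epsilon\end{smallmatrix}\bigr)$, with $A_\epsilon,B_\epsilon,C_\epsilon$ the integrals of $G_\alpha^2(s-r,y-v)$, $G_\alpha(s-r,y-v)G_\alpha(t-r,x-v)$ and $G_\alpha^2(t-r,x-v)$ over $[s-\epsilon,s]\times\mathbb{R}$, and $R_\xi=\int_{s-\epsilon}^s\!\int\sum_k[Q^{(k)}]^2\,dv\,dr$.

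Using the Chapman--Kolmogorov semigroup property and \eqref{eq2017-09-05-1} to compute $A_\epsilon$, $B_\epsilon$ and $C_\epsilon$ explicitly, the determinant $A_\epsilon C_\epsilon-B_\epsilon^2$ is bounded below by the right power of $\epsilon$ (for instance of order $\epsilon^{(2\alpha-1)/\alpha}(t-s)^{-1/\alpha}$ in the temporally-dominated regime), while the trace is of order $A_\epsilon\asymp\epsilon^{(\alpha-1)/\alpha}$. A suitable choice $\epsilon\asymp(|t-s|^{(\alpha-1)/\alpha}+|x-y|^{\alpha-1})^{\alpha/(\alpha-1)}$, possibly combined with including the contribution of $r\in[s,t]$ (where only $\mu$ appears) to cover the spatially-dominated regime, yields $\Phi_\epsilon(\lambda,\mu)\geq c(|t-s|^{(\alpha-1)/\alpha}+|x-y|^{\alpha-1})\,(\|\lambda\|^2+\|\mu\|^2)$ uniformly over unit $\xi$. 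For the remainder, the inequality $R_\xi\leq c\sum_k\int_{s-\epsilon}^s\!\int\sum_i[a_i^2(k,r,v,s,y)+a_i^2(k,r,v,t,x)]dv\,dr$ combined with Lemma \ref{lemma2017-09-15-1} (applied to the $s$-endpoint term, giving the bound $\epsilon^{2(\alpha-1)q/\alpha}$) and Lemma \ref{lemma2017-09-19-1} (applied to the $t$-endpoint term, under the constraint $t-s>c_0\epsilon^{\gamma_0}$) gives $\mbox{E}[R_\xi^q]\leq C_q\,\epsilon^{\theta q}$ with $\theta=\min((1+\gamma_0)(\alpha-1)/\alpha,\,1-\gamma_0/\alpha)>(\alpha-1)/\alpha$; hence $R_\xi$ is of strictly smaller $L^q$-order than the leading term $c(|t-s|^{(\alpha-1)/\alpha}+|x-y|^{\alpha-1})\asymp\epsilon^{(\alpha-1)/\alpha}$.

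The pathwise lower bound $\xi^T\gamma_Z\xi\geq c(|t-s|^{(\alpha-1)/\alpha}+|x-y|^{\alpha-1})-2R_\xi$, uniform in unit $\xi$, together with the $L^q$-control of $R_\xi$, allows us to invoke the perturbation argument of \cite[Proposition 3.5]{DKN09} (as already used in the proof of Proposition \ref{prop2017-09-05-1}) after taking the infimum over $\xi$; this yields \eqref{eq2017-09-19-5}. The main obstacle is the delicate analysis of the Gaussian quadratic form $\Phi_\epsilon$: one has to treat separately the regimes where the temporal or the spatial part of $\Delta_\alpha$ dominates, and in each one must verify, via the explicit computations of $A_\epsilon, B_\epsilon, C_\epsilon$, that the non-degeneracy of $A_\epsilon C_\epsilon-B_\epsilon^2$ is preserved subject to the constraint $t-s>c_0\epsilon^{\gamma_0}$ required for Lemma \ref{lemma2017-09-19-1}. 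The improvement over \cite[Proposition 6.9]{DKN09} arises precisely at this point, because the sharper exponent $\theta>(\alpha-1)/\alpha$ provided by Lemma \ref{lemma2017-09-19-1} lets $R_\xi$ be dominated by the leading term in $L^q$ without the loss of an extra factor of $\eta$.
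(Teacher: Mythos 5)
You have correctly identified the crucial new ingredient (Lemma \ref{lemma2017-09-19-1} replacing Lemma \ref{lemma2017-09-15-1} whenever $t-s>c_0\epsilon^{\gamma_0}$, giving the remainder a strictly better exponent $\theta>\tfrac{\alpha-1}{\alpha}$) and the concluding step (\cite[Proposition~3.5]{DKN09}). However, the heart of the argument --- the pathwise lower bound on the leading Gaussian quadratic form --- is where your plan has a genuine gap. Your $\Phi_\epsilon$, built from $r\in[s-\epsilon,s]$ only, has per-coordinate matrix $M_\epsilon=\bigl(\begin{smallmatrix}A_\epsilon & B_\epsilon-A_\epsilon\\ B_\epsilon-A_\epsilon & A_\epsilon-2B_\epsilon+C_\epsilon\end{smallmatrix}\bigr)$ with (for $h:=t-s\gg\epsilon$ and using \eqref{eq2017-09-05-1}) $A_\epsilon\asymp\epsilon^{\nu}$, $C_\epsilon\asymp\epsilon h^{\nu-1}$, $B_\epsilon\asymp\epsilon h^{\nu-1}$ where $\nu=\tfrac{\alpha-1}{\alpha}$. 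Then $\mathrm{tr}(M_\epsilon)\asymp\epsilon^\nu$ and $\det(M_\epsilon)=A_\epsilon C_\epsilon-B_\epsilon^2\lesssim\epsilon^{\nu+1}h^{\nu-1}$, so the smallest eigenvalue $\asymp\det/\mathrm{tr}\lesssim\epsilon h^{\nu-1}$, which is $\ll\epsilon^\nu$ in this regime. Therefore your claimed bound $\Phi_\epsilon(\lambda,\mu)\geq c(\Delta_\alpha)^2(\|\lambda\|^2+\|\mu\|^2)$ is false if $\Phi_\epsilon$ uses only $[s-\epsilon,s]$. The contribution of $r\in[t-\epsilon,t]$ (or $[(t-\epsilon)\vee s,t]$) is \emph{essential in every regime}, not merely ``possibly'' and not only ``to cover the spatially-dominated regime'': it adds $D_\epsilon\asymp\epsilon^\nu$ to the $(\mu,\mu)$-entry, which restores $\det\gtrsim\epsilon^{2\nu}$.

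Beyond this, two further pieces of the argument are elided. First, the paper does not work with $\det M_\epsilon$ and $\mathrm{tr}\,M_\epsilon$ at all; it passes to the $(\lambda-\mu,\mu)$ basis (which near-diagonalizes the leading part) and handles the cross term $B_1^{(3)}$ by explicit Green-kernel estimates, with a small parameter $\delta$ (chosen so the coefficient in front of $\epsilon^\nu$ beats the cross term) in one regime and a large parameter $\beta$ in another; this is where the actual technical work lies and you would need to either reproduce it or supply a correct uniform lower bound on your enlarged quadratic form, neither of which is in your sketch. Second, your treatment of the remainder only addresses the regime $t-s>c_0\epsilon^{\gamma_0}$; you need to also observe that in the complementary regime $t-s\leq c_0\epsilon^{\gamma_0}$ (equivalently, $\epsilon\gtrsim(t-s)^{1/\gamma_0}$), Lemma \ref{lemma2017-09-15-1} already gives $(t-s+\epsilon)^{\nu q}\epsilon^{\nu q}\lesssim\epsilon^{(1+\gamma_0)\nu q}$, which is still strictly better than $\epsilon^{\nu q}$. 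The paper's case structure (temporally vs.\ spatially dominated, and within each a split at $\epsilon\asymp(t-s)^{1/\gamma_0}$) is precisely what organizes these two remarks together with the quadratic-form bound, and your proposal leaves that organization unstated.
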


We are going to apply Lemma \ref{lemma2017-09-19-1} to prove this proposition. This is an improvement over the proof of \cite[Proposition 6.9]{DKN09} in which an extra exponent $\eta$ appears.

\begin{prop}\label{prop2017-09-19-5}
 Assume \textbf{P1'} and \textbf{P2}. Fix $T > 0$ and $p > 1$. Then there exists $C = C(p, T)$ such that for all $s, t \in I$ with $t \geq s, x, y \in J, (s, y) \neq (t, x)$,
\begin{align}\label{eq2017-09-19-6}
\mbox{E}\left[\mathbf{1}_{A_K}\left(\prod_{i \in K}(\xi^i)^T\gamma_Z\xi^i\right)^{-p}\right] \leq C,
\end{align}
where $A_K$ is defined just below (\ref{eq2017-09-19-4}).
\end{prop}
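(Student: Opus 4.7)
The proposition is the fractional-heat analog of \cite[Proposition 6.10]{DKN09}, and the proof will follow that one very closely. The key ingredients are already in hand: the one-point non-degeneracy of $u(s,y)$ from Proposition \ref{prop2017-09-05-1}, the $L^p$-Malliavin regularity of the increment from Proposition \ref{prop2017-09-13-1}, the moment bounds on iterated Malliavin derivatives from \eqref{eq2017-09-05-4}, and the small-ball principle of \cite[Proposition 3.5]{DKN09}. Only the powers of the parabolic metric $\Delta_{\alpha}$ differ from those in \cite{DKN09}, and these do not enter in the statement of this proposition because the claimed bound is a constant.

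The first step is to write, for each $i \in K$,
\begin{align*}
(\xi^i)^T \gamma_Z \xi^i = \bigl\|\lambda^i \cdot Du(s,y) + \mu^i \cdot D(u(t,x) - u(s,y))\bigr\|_{\mathscr{H}}^2,
\end{align*}
and, using the decomposition $\xi^i = \beta_i(\tilde{\lambda}^i, \mathbf{0}) + \sqrt{1 - \beta_i^2}(\mathbf{0}, \tilde{\mu}^i)$ with $\beta_i \geq \beta_0$ on $A_K$, to identify the first summand as the "large projection" contribution that should dominate.

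The strategy is then to obtain a lower bound of the form
\begin{align*}
(\xi^i)^T \gamma_Z \xi^i \geq c(\beta_0)\, Z_i - Y_i,
\end{align*}
where the principal term $Z_i$ is bounded below by $\beta_0^2$ times the smallest eigenvalue of $\gamma_{u(s,y)}$ (which has uniformly bounded negative moments of all orders by Proposition \ref{prop2017-09-05-1}, combined with the standard linear-algebra estimate $\nu_{\min} \geq \det(\gamma_{u(s,y)}) / \operatorname{tr}(\gamma_{u(s,y)})^{d-1}$ and \eqref{eq2017-09-05-4}), and the error $Y_i$ is a sum of cross terms involving $D(u(t,x) - u(s,y))$, controlled uniformly in $L^p$ over $(s,y), (t,x) \in I \times J$ by Proposition \ref{prop2017-09-13-1}. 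A small-ball argument in the style of \cite[Proposition 3.5]{DKN09} then yields $\mathrm{E}[((\xi^i)^T \gamma_Z \xi^i)^{-pd}] \leq C$ uniformly, and H\"older's inequality across the $d$ factors indexed by $i \in K$ gives \eqref{eq2017-09-19-6}.

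The main technical hurdle is that the perturbation $Y_i$ need not be small: if $(t,x)$ and $(s,y)$ are far apart in $I \times J$, then $\|D(u(t,x) - u(s,y))\|_{\mathscr{H}}^2$ is only known to have bounded (not small) $L^p$-moments. The delicate balance between this bounded-but-not-small perturbation and the very rapid decay $\mathrm{P}(\nu_{\min}(\gamma_{u(s,y)}) \leq \epsilon) = O(\epsilon^q)$ for every $q \geq 1$ (which follows from Proposition \ref{prop2017-09-05-1}) is exactly what the small-ball principle of \cite[Proposition 3.5]{DKN09} is engineered to handle, and that argument applies here verbatim.
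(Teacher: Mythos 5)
Your outline does not actually fit the hypotheses of the small-ball lemma \cite[Proposition 3.5]{DKN09}, and the gap is not one of bookkeeping but of structure. That lemma requires an $\epsilon$-parameterized family of lower bounds $X \geq c\,\epsilon^{\alpha_1} - Y_\epsilon$ for $\epsilon \in \, ]0,\epsilon_0]$, where the perturbation decays \emph{faster} than the main term: $\mathrm{E}[|Y_\epsilon|^q] \leq C\epsilon^{\beta_1 q}$ with $\beta_1 > \alpha_1$. The single decomposition you propose, $(\xi^i)^T\gamma_Z\xi^i \geq c(\beta_0)\,Z_i - Y_i$ with $Z_i$ bounded away from zero in moments and $Y_i$ merely bounded in every $L^p$, does not have this structure, and it cannot yield a bound on $\mathrm{E}[X^{-p}]$. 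To see the obstruction concretely, take $Z\equiv 1$ and $Y = 1-\delta$ with probability $\delta$, $Y=0$ otherwise; then $X := (Z-Y)^+$ satisfies $\mathrm{E}[X^{-p}] \geq \delta^{1-p}$, which is unbounded as $\delta\to 0$ for $p>1$, even though $Z$ has all negative moments and $Y$ has all positive moments bounded uniformly. The remark that rapid small-ball decay of $\nu_{\min}(\gamma_{u(s,y)})$ somehow "handles" an $O(1)$ perturbation "verbatim" is precisely where the argument breaks.

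What the paper does instead, and what is essential, is to restrict the Malliavin inner product to short time windows $[s-\epsilon,s]$ and $[s\vee(t-\epsilon), t]$ before applying the decomposition $\xi^{i_0} = \beta_{i_0}(\tilde\lambda^{i_0},\mathbf{0}) + \sqrt{1-\beta_{i_0}^2}(\mathbf{0},\tilde\mu^{i_0})$; see the expression in \eqref{eq2017-09-20-10}. On these windows, the principal Green-kernel contribution $f_1 + f_2$ can be bounded below by $c_0\,\epsilon^{(\alpha-1)/\alpha}$ uniformly in $\beta\in[\beta_0,1]$, $s,t,x,y$ — this is the claim \eqref{eq2017-09-21-3}, and proving it is the genuine technical core of the proposition, requiring delicate control of the cross terms between $G_\alpha(s-r,y-v)$ and $G_\alpha(t-r,x-v)$ through the semigroup/scaling properties and the two sub-cases $\beta \gtrless 2^{-1/2}$. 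Meanwhile, the error terms $I_{j,\epsilon}$ coming from the $a_i$-integrals are bounded in $L^q$ by $c(q)\,\epsilon^{2(\alpha-1)q/\alpha}$ via Lemma \ref{lemma2017-09-15-1}. The resulting exponent hierarchy $\frac{2(\alpha-1)}{\alpha} > \frac{\alpha-1}{\alpha}$ is what makes \cite[Proposition 3.5]{DKN09} applicable. None of this $\epsilon$-localization, nor the analysis of the cross terms that produces \eqref{eq2017-09-21-3}, appears in your outline, and without it the proof cannot be completed.
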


\begin{proof}[Proof of Proposition \ref{prop2017-09-19-4}]

Since $\gamma_Z$ is a matrix of inner products, we can write
\begin{align*}
\xi^T\gamma_Z\xi &= \sum_{k = 1}^d\int_0^Tdr\int_{\mathbb{R}}dv\Big(\sum_{i = 1}^d\big(\lambda_i D_{r,v}^{(k)}(u_i(s, y))  + \mu_i(D_{r,v}^{(k)}(u_i(t, x)) - D_{r,v}^{(k)}(u_i(s, y)))\big)\Big)^2.
\end{align*}
From here on, the proof is divided into two cases.

\textbf{Case 1}. In the first case, we assume that $t - s > 0$ and $|x - y|^{\alpha} \leq t - s$. Choose and fix an $\epsilon \in\,  ]0, \delta(t - s)[$, where $0 < \delta < 1$ is small but fixed; its specific value will be decided later on (see the line above \eqref{eq2017-09-20-2}). Then we may write
\begin{align*}
\xi^T\gamma_Z\xi \geq J_1 + J_2,
\end{align*}
where
\begin{align*}
J_1 &:= \sum_{k = 1}^d\int_{s - \epsilon}^sdr\int_{\mathbb{R}}dv\left(\sum_{i = 1}^d(\lambda_i - \mu_i)[G_{\alpha}(s - r, y - v)\sigma_{ik}(u(r, v)) + a_i(k, r, v, s, y)] + W\right)^2,\\
J_2 &:= \sum_{k = 1}^d\int_{t - \epsilon}^tdr\int_{\mathbb{R}}dv \, W^2,
\end{align*}
$a_i(k, r, v, s, y)$ is defined in (\ref{eq2017-09-05-4001122}) and
\begin{align*}
W := \sum_{i = 1}^d[\mu_i G_{\alpha}(t - r, x - v)\sigma_{ik}(u(r, v)) + \mu_ia_i(k, r, v, t, x)].
\end{align*}

\textit{Sub-case A}: $\epsilon \leq \delta(t - s)^{1/\gamma_0}$ with $0 < \gamma_0 < 1$. In this sub-case, by the elementary inequality (\ref{eq2017-09-06-2}),
\begin{align*}
J_2 \geq \hat{Y}_{1,\epsilon} - Y_{1,\epsilon},
\end{align*}
where
\begin{align*}
\hat{Y}_{1,\epsilon} &:= \frac{2}{3}\sum_{k = 1}^d\int_{t - \epsilon}^tdr\int_{\mathbb{R}}dv\left(\sum_{i = 1}^d\mu_i\sigma_{ik}(u(r, v))\right)^2G_{\alpha}^2(t - r, x - v),\\
Y_{1,\epsilon} &:= 2\sup\limits_{\|\mu\| \leq 1}\sum_{k = 1}^d\int_{t - \epsilon}^tdr\int_{\mathbb{R}}dv\left(\sum_{i = 1}^d\mu_ia_i(k, r, v, t, x)\right)^2.
\end{align*}
In agreement with hypothesis \textbf{P2} and by \eqref{eq2017-09-05-1},
\begin{align*}
\hat{Y}_{1,\epsilon} &\geq c \|\mu\|^2\int_{t - \epsilon}^tdr\int_{\mathbb{R}}dv \, G_{\alpha}^2(t - r, x - v) = c'\|\mu\|^2\epsilon^{\frac{\alpha - 1}{\alpha}}.
\end{align*}
Next we apply Lemma \ref{lemma2017-09-15-1} [with $s := t$] to find that $\mbox{E}\left[|Y_{1,\epsilon}|^q\right] \leq c\epsilon^{\frac{2\alpha - 2}{\alpha}q}$, for any $q \geq 1$.

For $J_1$, we find that
\begin{align*}
J_1 \geq \hat{Y}_{2, \epsilon} - Y_{2, \epsilon},
\end{align*}
where
\begin{align*}
 \hat{Y}_{2, \epsilon} := \frac{2}{3}\sum_{k = 1}^d\int_{s - \epsilon}^sdr\int_{\mathbb{R}}dv\left(\sum_{i = 1}^d(\lambda_i - \mu_i)\sigma_{ik}(u(r, v))\right)^2G_{\alpha}^2(s - r, y - v),
\end{align*}
and
\begin{align*}
Y_{2, \epsilon} := 6(W_1 + W_2 + W_3),
\end{align*}
where
\begin{align}
W_1 &:= \sup\limits_{\|\xi\| = 1}\sum_{k = 1}^d\int_{s - \epsilon}^sdr\int_{\mathbb{R}}dv\left(\sum_{i = 1}^d\mu_iG_{\alpha}(t - r, x - v)\sigma_{ik}(u(r, v))\right)^2,\nonumber\\
W_2 &:= \sup\limits_{\|\xi\| = 1}\sum_{k = 1}^d\int_{s - \epsilon}^sdr\int_{\mathbb{R}}dv\left(\sum_{i = 1}^d(\lambda_i - \mu_i)a_i(k, r, v, s, y)\right)^2,\label{eq2018-07-09-1}\\
W_3 &:= \sup\limits_{\|\xi\| = 1}\sum_{k = 1}^d\int_{s - \epsilon}^sdr\int_{\mathbb{R}}dv\left(\sum_{i = 1}^d\mu_ia_i(k, r, v, t, x)\right)^2.\label{eq2018-07-09-2}
\end{align}
Hypothesis \textbf{P2} implies that $\hat{Y}_{2, \epsilon} \geq c \|\lambda - \mu\|\epsilon^{\frac{\alpha - 1}{\alpha}}$.
We next give an estimate on the $q$-th moment of $W_1$, which is better than in \cite{DKN09}. We apply the Cauchy-Schwartz inequality to find that, for any $q \geq 1$,
\begin{align*}
\mbox{E}\left[|W_1|^q\right] &\leq  \sup\limits_{\|\xi\| = 1}\|\mu\|^{2q}\times \mbox{E}\left[\left|\sum_{k = 1}^d\int_{s - \epsilon}^sdr\int_{\mathbb{R}}dv\sum_{i = 1}^d(\sigma_{ik}(u(r, v)))^2G_{\alpha}^2(t - r, x - v)\right|^q\right].
\end{align*}
Thanks to hypothesis \textbf{P1'} and \eqref{eq2017-09-05-1}, this is bounded above by
\begin{align*}
 c \left|\int_{s - \epsilon}^sdr\int_{\mathbb{R}}dv \, G_{\alpha}^2(t - r, x - v)\right|^q &= c ((t - s + \epsilon)^{\frac{\alpha - 1}{\alpha}} - (t - s)^{\frac{\alpha - 1}{\alpha}})^q\\
 & \leq  c\epsilon^{(1 - \frac{\gamma_0}{\alpha})q},
\end{align*}
where, in the inequality, we perform the same calculation as in \eqref{eq2017-09-14-1} under the assumption $t - s > c_0\epsilon^{\gamma_0}$ of this Sub-case A.

We bound the $q$-th moment of $W_2$ similarly as in \cite{DKN09}: By the Cauchy-Schwarz inequality,
\begin{align*}
\mbox{E}\left[|W_2|^q\right] &\leq  \sup\limits_{\|\xi\| = 1}\|\lambda - \mu\|^{2q}\times \mbox{E}\left[\left|\sum_{k = 1}^d\int_{s - \epsilon}^sdr\int_{\mathbb{R}}dv\sum_{i = 1}^da_i^2(k, r, v, s, y)\right|^q\right]\\
&\leq c \, \mbox{E}\left[\left|\sum_{k = 1}^d\int_{s - \epsilon}^sdr\int_{\mathbb{R}}dv\sum_{i = 1}^da_i^2(k, r, v, s, y)\right|^q\right].
\end{align*}
We apply Lemma \ref{lemma2017-09-15-1} [with $t := s$] to find that $\mbox{E}\left[|W_2|^q\right] \leq c\epsilon^{\frac{2\alpha - 2}{\alpha}q}$.

Furthermore, different from the estimate of the $q$-th moment of $W_3$ in \cite{DKN09},  under the assumption of this Sub-case A, by Lemma \ref{lemma2017-09-19-1} we find that, for any $q \geq 1$,
\begin{align*}
\mbox{E}\left[|W_3|^q\right] &\leq  \sup\limits_{\|\xi\| = 1}\|\mu\|^{2q}\times \mbox{E}\left[\left|\sum_{k = 1}^d\int_{s - \epsilon}^sdr\int_{\mathbb{R}}dv\sum_{i = 1}^da_i^2(k, r, v, t, x)\right|^q\right]\\
&\leq c\epsilon^{\min((1 + \gamma_0)\frac{\alpha - 1}{\alpha}, 1 - \frac{\gamma_0}{\alpha})q}.
\end{align*}
The preceding bounds for $W_1, W_2$ and $W_3$ prove, in conjunction, that
\begin{align*}
\mbox{E}\left[|Y_{2,\epsilon}|^q\right] \leq c\epsilon^{\min((1 + \gamma_0)\frac{\alpha - 1}{\alpha}, 1 - \frac{\gamma_0}{\alpha})q}.
\end{align*}
 Thus we have
\begin{align}\label{eq2017-09-20-1}
J_1 + J_2 &\geq \hat{Y}_{1, \epsilon} + \hat{Y}_{2, \epsilon} - Y_{1, \epsilon} - Y_{2, \epsilon} \nonumber\\
&\geq c(\|\mu\|^2 + \|\lambda - \mu\|^2)\epsilon^{\frac{\alpha - 1}{\alpha}} - Y_{1, \epsilon} - Y_{2, \epsilon} \nonumber\\
&\geq c\epsilon^{\frac{\alpha - 1}{\alpha}} - Y_{\epsilon},
\end{align}
where $Y_{\epsilon} := Y_{1, \epsilon} + Y_{2, \epsilon}$ satisfies
\begin{align}
\mbox{E}\left[|Y_{\epsilon}|^q\right] \leq c\epsilon^{\min((1 + \gamma_0)\frac{\alpha - 1}{\alpha}, 1 - \frac{\gamma_0}{\alpha})q}.
\end{align}

\textit{Sub-case B}: $\delta(t - s)^{1/\gamma_0} < \epsilon < \delta(t - s)$. In this sub-case, we are going to give a different estimate on $J_1$:
\begin{align*}
J_1 \geq \tilde{Y}_{\epsilon} - 4(W_2 + W_3),
\end{align*}
where
\begin{align*}
\tilde{Y}_{\epsilon}:= \frac{2}{3}\sum_{k = 1}^d\int_{s - \epsilon}^sdr\int_{\mathbb{R}}dv\left(\sum_{i = 1}^d[(\lambda_i - \mu_i)G_{\alpha}(s - r, y - v) + \mu_iG_{\alpha}(t - r, x - v)]\sigma_{ik}(u(r, v))\right)^2
\end{align*}
and $W_2$ and $W_3$ are defined in \eqref{eq2018-07-09-1} and \eqref{eq2018-07-09-2}.
Using the inequality
$
(a + b)^2 \geq a^2  - 2|ab|,
$
we see that
\begin{align*}
\tilde{Y}_{\epsilon} \geq \hat{Y}_{2, \epsilon} - \frac{4}{3}B_1^{(3)},
\end{align*}
where as above, $\hat{Y}_{2, \epsilon} \geq c \|\lambda - \mu\|\epsilon^{\frac{\alpha - 1}{\alpha}}$, and
\begin{align}
B_1^{(3)} &:= \sum_{k = 1}^d\int_{s - \epsilon}^sdr\int_{\mathbb{R}}dv\left|\sum_{i = 1}^d(\lambda_i - \mu_i)G_{\alpha}(s - r, y - v) \sigma_{ik}(u(r, v))\right| \nonumber\\
& \quad \times \left|\sum_{i = 1}^d\mu_iG_{\alpha}(t - r, x - v)\sigma_{ik}(u(r, v))\right|. \label{eq2018-07-09-3}
\end{align}
Hypothesis \textbf{P1'} assures us that
\begin{align*}
\left|B_1^{(3)}\right| &\leq c\int_{s - \epsilon}^sdr\int_{\mathbb{R}}dv \, G_{\alpha}(s - r, y - v)G_{\alpha}(t - r, x - v)\\
 &= c\int_{s - \epsilon}^sdr \, G_{\alpha}(t + s - 2r, x - y) = c\int_{0}^{\epsilon}dr \, G_{\alpha}(t - s + 2r, x - y),
 \end{align*}
 where, in the first equality, we use the semi-group property of the Green kernel \cite[Lemma 4.1(iii)]{ChD15}. Since for any $t > 0$, the function $x \mapsto G_{\alpha}(t, x)$ attains its maximum at $0$, this is bounded above by
 \begin{align*}
 c\int_{0}^{\epsilon}dr\, G_{\alpha}(t - s + 2r, 0)  &= c'\int_{0}^{\epsilon}dr(t - s + 2r)^{-\frac{1}{\alpha}}\\
    &= c'((t - s + 2\epsilon)^{\frac{\alpha - 1}{\alpha}} - (t - s)^{\frac{\alpha - 1}{\alpha}})\\
    &= c'\epsilon^{\frac{\alpha - 1}{\alpha}}\left((\frac{t - s}{\epsilon} + 2)^{\frac{\alpha - 1}{\alpha}} - (\frac{t - s}{\epsilon})^{\frac{\alpha - 1}{\alpha}}\right)\\
    &\leq c'\epsilon^{\frac{\alpha - 1}{\alpha}}((1/\delta + 2)^{\frac{\alpha - 1}{\alpha}} - (1/\delta)^{\frac{\alpha - 1}{\alpha}}),
\end{align*}
where the first equality is due to the scaling property of Green kernel \cite[Lemma 4.1(iv)]{ChD15} and in the inequality we use the assumption $\epsilon < \delta(t - s)$ and the fact that the function $x \mapsto (x + 2)^{\frac{\alpha - 1}{\alpha}} - x^{\frac{\alpha - 1}{\alpha}}$ is decreasing on $[0, \infty[$.
 Hence we have
\begin{align}
J_1 + J_2 &\geq \hat{Y}_{1, \epsilon} + \hat{Y}_{2, \epsilon} - \frac{4}{3}B_1^{(3)} - 4W_2 - 4W_3 - Y_{1, \epsilon} \nonumber\\
&\geq c(\|\mu\|^2 + \|\lambda - \mu\|^2)\epsilon^{\frac{\alpha - 1}{\alpha}} - c'\epsilon^{\frac{\alpha - 1}{\alpha}}((1/\delta + 2)^{\frac{\alpha - 1}{\alpha}} - (1/\delta)^{\frac{\alpha - 1}{\alpha}}) - 4W_2 - 4W_3 - Y_{1, \epsilon}\nonumber\\
&\geq c_0\epsilon^{\frac{\alpha - 1}{\alpha}} - c'\epsilon^{\frac{\alpha - 1}{\alpha}}((1/\delta + 2)^{\frac{\alpha - 1}{\alpha}} - (1/\delta)^{\frac{\alpha - 1}{\alpha}}) - 4W_2 - 4W_3 - Y_{1, \epsilon}\nonumber
\end{align}
We can choose $\delta$ small so that $c_0 > c'((1/\delta + 2)^{\frac{\alpha - 1}{\alpha}} - (1/\delta)^{\frac{\alpha - 1}{\alpha}})$ and therefore,
\begin{align}\label{eq2017-09-20-2}
J_1 + J_2 &\geq c\epsilon^{\frac{\alpha - 1}{\alpha}} - 4W_2 - 4W_3 - Y_{1, \epsilon}.
\end{align}
In this sub-case,
\begin{align*}
\mbox{E}\left[|W_2|^q\right] &\leq  \sup\limits_{\|\xi\| = 1}\|\lambda - \mu\|^{2q}\times \mbox{E}\left[\left|\sum_{k = 1}^d\int_{s - \epsilon}^sdr\int_{\mathbb{R}}dv\sum_{i = 1}^da_i^2(k, r, v, s, y)\right|^q\right]\\
&\leq c\, \mbox{E}\left[\left|\sum_{k = 1}^d\int_{s - \epsilon}^sdr\int_{\mathbb{R}}dv\sum_{i = 1}^da_i^2(k, r, v, s, y)\right|^q\right].
\end{align*}
We apply Lemma \ref{lemma2017-09-15-1} to find that $\mbox{E}\left[|W_2|^q\right] \leq c\epsilon^{\frac{2\alpha - 2}{\alpha}q}$.
Similarly, we find using Lemma \ref{lemma2017-09-15-1} and the assumption $\delta(t - s)^{1/\gamma_0} < \epsilon$ that
\begin{align*}
\mbox{E}\left[|W_3|^q\right] &\leq  \sup\limits_{\|\xi\| = 1}\|\mu\|^{2q}\times \mbox{E}\left[\left|\sum_{k = 1}^d\int_{s - \epsilon}^sdr\int_{\mathbb{R}}dv\sum_{i = 1}^da_i^2(k, r, v, t, x)\right|^q\right]\\
&\leq c(t - s + \epsilon)^{\frac{\alpha - 1}{\alpha}q}\epsilon^{\frac{\alpha - 1}{\alpha}q}\\
&\leq c(\delta^{-\gamma_0}\epsilon^{\gamma_0} + \epsilon)^{\frac{\alpha - 1}{\alpha}q}\epsilon^{\frac{\alpha - 1}{\alpha}q}\leq c'\epsilon^{(1 + \gamma_0)\frac{\alpha - 1}{\alpha}q}.
\end{align*}
Combining \eqref{eq2017-09-20-1} and \eqref{eq2017-09-20-2}, we have for $\epsilon \in \, ]0, \delta(t - s)[$,
\begin{align}
\inf_{\|\xi\| = 1}\xi^T\gamma_Z\xi \geq c\epsilon^{\frac{\alpha - 1}{\alpha}} - \tilde{Z}_{\epsilon},
\end{align}
where
\begin{align*}
\tilde{Z}_{\epsilon} := Y_{\epsilon}1_{\{\epsilon \leq \delta(t - s)^{1/\gamma_0}\}} + 4(W_2 + W_3 + Y_{1, \epsilon})1_{\{\delta(t - s)^{1/\gamma_0} < \epsilon < \delta(t - s)\}}
\end{align*}
and for all $q \geq 1$,
\begin{align}\label{eq2017-09-20-7}
\mbox{E}\left[|Y_{\epsilon}1_{\{\epsilon \leq \delta(t - s)^{1/\gamma_0}\}}|^q\right] \leq c\epsilon^{\min((1 + \gamma_0)\frac{\alpha - 1}{\alpha}, 1 - \frac{\gamma_0}{\alpha})q},
\end{align}
and
\begin{align}\label{eq2017-09-20-9}
 \mbox{E}\left[|4(W_2 + W_3 + Y_{1, \epsilon})1_{\{\delta(t - s)^{1/\gamma_0} < \epsilon < \delta(t - s)\}}|^q\right] \leq c\epsilon^{(1 + \gamma_0)\frac{\alpha - 1}{\alpha}q}.
\end{align}
We use \cite[Proposition 3.5]{DKN09} to find that
\begin{align*}
\mbox{E}\left[\left(\inf_{\|\xi\| = 1}\xi^T\gamma_Z\xi\right)^{-2pd}\right] &\leq c\left(\delta(t - s)\right)^{-2pd\frac{\alpha - 1}{\alpha}} = c'(t - s)^{-2pd\frac{\alpha - 1}{\alpha}}\\
  &\leq \tilde{c}\left[|t - s|^{\frac{\alpha - 1}{\alpha}} + |x - y|^{\alpha - 1}\right]^{-2pd},
\end{align*}
whence follows the result in the case that $|x - y|^{\alpha} \leq t - s < 1$.

\textbf{Case 2}. Now we work on the second case where $|x - y| > 0$ and $|x - y|^{\alpha} \geq t - s \geq 0$. Let $\epsilon > 0$ be such that $(1 + \beta)\epsilon^{1/\alpha} < \frac{1}{2}|x - y|$, where $\beta > 0$ is large but fixed; its specific value will be decided on later (see the explanation for \eqref{eq2017-11-19-1} and \eqref{eq2017-11-19-2}). Then
\begin{align*}
\xi^T\gamma_Z\xi \geq I_1 + I_2,
\end{align*}
where
\begin{align*}
I_1 &:= \sum_{k = 1}^d\int_{s - \epsilon}^sdr\int_{\mathbb{R}}dv \, (\varphi_1 + \varphi_2)^2, \quad
I_2 := \sum_{k = 1}^d\int_{(t - \epsilon)\vee s}^tdr\int_{\mathbb{R}}dv \, \varphi_2^2,
\end{align*}
and
\begin{align*}
\varphi_1 &:= \sum_{i = 1}^d(\lambda_i - \mu_i)[G_{\alpha}(s - r, y - v)\sigma_{ik}(u(r, v)) + a_i(k, r, v, s, y)],\\
\varphi_2 &:= \sum_{i = 1}^d[\mu_i G_{\alpha}(t - r, x - v)\sigma_{ik}(u(r, v)) + \mu_ia_i(k, r, v, t, x)].
\end{align*}

From here on, Case 2 is divided into two further sub-cases.

\textit{Sub-Case A}. Suppose, in addition, that $\epsilon \geq \delta(t - s)$, where $\delta$ is chosen as in Case 1. In this sub-case, we are going to prove that
\begin{align}\label{eq2017-09-20-3}
\inf_{\|\xi\| = 1}\xi^T\gamma_Z\xi \geq c\epsilon^{\frac{\alpha - 1}{\alpha}} - Z_{1,\epsilon},
\end{align}
where for all $q \geq 1$,
\begin{align} \label{eq2017-09-20-6}
\mbox{E}\left[|Z_{1,\epsilon}|^q\right] \leq c(q)\epsilon^{\frac{2\alpha - 2}{\alpha}q}.
\end{align}
Indeed, by the elementary inequality (\ref{eq2017-09-06-2}) we find that
\begin{align*}
I_1 \geq \frac{2}{3}\tilde{A}_1 - B_1^{(1)} - B_1^{(2)},
\end{align*}
where
\begin{align}
\tilde{A}_1 &:= \sum_{k = 1}^d\int_{s - \epsilon}^sdr\int_{\mathbb{R}}dv\left(\sum_{i = 1}^d[(\lambda_i - \mu_i)G_{\alpha}(s - r, y - v) + \mu_iG_{\alpha}(t - r, x - v)]\sigma_{ik}(u(r, v))\right)^2, \nonumber\\
B_1^{(1)} &:= 4 \|\lambda - \mu\|^2 \sum_{k = 1}^d\int_{s - \epsilon}^sdr\int_{\mathbb{R}}dv\sum_{i = 1}^da_i^2(k, r, v, s, y),\label{eq2017-09-20-13}\\
B_1^{(2)} &:= 4 \|\mu\|^2 \sum_{k = 1}^d\int_{s - \epsilon}^sdr\int_{\mathbb{R}}dv\sum_{i = 1}^da_i^2(k, r, v, t, x).\label{eq2017-09-20-14}
\end{align}
Using the inequality
$
(a + b)^2 \geq a^2 + b^2 - 2|ab|,
$
we see that
\begin{align*}
\tilde{A}_1 \geq A_1 + A_2 - 2B_1^{(3)},
\end{align*}
where
\begin{align*}
A_1 &:= \sum_{k = 1}^d\int_{s - \epsilon}^sdr\int_{\mathbb{R}}dv\left(\sum_{i = 1}^d(\lambda_i - \mu_i)G_{\alpha}(s - r, y - v) \sigma_{ik}(u(r, v))\right)^2, \\
A_2 &:= \sum_{k = 1}^d\int_{s - \epsilon}^sdr\int_{\mathbb{R}}dv\left(\sum_{i = 1}^d\mu_iG_{\alpha}(t - r, x - v)\sigma_{ik}(u(r, v))\right)^2
\end{align*}
and $B_1^{(3)}$ has the same expression as in \eqref{eq2018-07-09-3}.
We can combine terms to find that
\begin{align*}
I_1 \geq \frac{2}{3}(A_1 + A_2) - (B_1^{(1)} + B_1^{(2)} + 2B_1^{(3)}).
\end{align*}
Moreover, we appeal to the elementary inequality (\ref{eq2017-09-06-2}) to find that
\begin{align*}
I_2 \geq \frac{2}{3}A_3 - B_2,
\end{align*}
where
\begin{align}
A_3 &:= \sum_{k = 1}^d\int_{(t - \epsilon)\vee s}^tdr\int_{\mathbb{R}}dv\left(\sum_{i = 1}^d\mu_i G_{\alpha}(t - r, x - v)\sigma_{ik}(u(r, v)\right)^2, \nonumber \\
B_2 &:= 2\sum_{k = 1}^d\int_{(t - \epsilon)\vee s}^tdr\int_{\mathbb{R}}dv\left(\sum_{i = 1}^d\mu_i a_i(k, r, v, t, x)\right)^2.\label{eq2017-09-20-16}
\end{align}
By hypothesis \textbf{P2} and using \eqref{eq2017-09-05-1} three times,
\begin{align*}
&A_1 + A_2 + A_3 \\
&\quad \geq \rho^2\Big(\|\lambda - \mu\|^{2}\int_{s - \epsilon}^sdr\int_{\mathbb{R}}dv \, G_{\alpha}^2(s - r, y - v) + \|\mu\|^2\int_{s - \epsilon}^sdr\int_{\mathbb{R}}dv \, G_{\alpha}^2(t - r, x - v) \\
&\quad\quad \quad \quad \quad + \|\mu\|^2\int_{(t - \epsilon)\vee s}^tdr\int_{\mathbb{R}}dv \, G_{\alpha}^2(t - r, x - v)\Big)\\
&\quad= c\rho^2\Big(\|\lambda - \mu\|^{2}\epsilon^{\frac{\alpha - 1}{\alpha}} + \|\mu\|^2\left((t - s + \epsilon)^{\frac{\alpha - 1}{\alpha}} - (t - s)^{\frac{\alpha - 1}{\alpha}} + (t - ((t - \epsilon)\vee s))^{\frac{\alpha - 1}{\alpha}}\right) \Big)\\
&\quad= c\rho^2\Big(\|\lambda - \mu\|^{2}\epsilon^{\frac{\alpha - 1}{\alpha}} + \|\mu\|^2\left((t - s + \epsilon)^{\frac{\alpha - 1}{\alpha}} - (t - s)^{\frac{\alpha - 1}{\alpha}} + ((t - s)\wedge \epsilon)^{\frac{\alpha - 1}{\alpha}}\right) \Big)\\
&\quad= c\rho^2\epsilon^{\frac{\alpha - 1}{\alpha}}\left(\|\lambda - \mu\|^{2} + \|\mu\|^2\left((\frac{t - s}{\epsilon} + 1)^{\frac{\alpha - 1}{\alpha}} - (\frac{t - s}{\epsilon})^{\frac{\alpha - 1}{\alpha}} + ((\frac{t - s}{\epsilon})\wedge 1)^{\frac{\alpha - 1}{\alpha}}\right) \right).
\end{align*}
Denote $\zeta(x):= (x + 1)^{\frac{\alpha - 1}{\alpha}} - x^{\frac{\alpha - 1}{\alpha}} + (x\wedge 1)^{\frac{\alpha - 1}{\alpha}}, \, x \in [0, \infty[$. Then it is clear that
\begin{align}\label{eq2017-09-21-1}
\hat{c}_0 := \min\limits_{0 \leq x < \infty}\zeta(x) > 0.
\end{align}
Thus we have
\begin{align*}
A_1 + A_2 + A_3 &\geq c\rho^2\epsilon^{\frac{\alpha - 1}{\alpha}}\Big(\|\lambda - \mu\|^{2} + \hat{c}_0\|\mu\|^2\Big) \geq c'\epsilon^{\frac{\alpha - 1}{\alpha}}.
\end{align*}
We are aiming for (\ref{eq2017-09-20-3}), and will bound the absolute moments of $B_1^{(i)}, i = 1, 2, 3$ and $B_2$, separately. According to Lemma \ref{lemma2017-09-15-1} with $s = t$,
\begin{align}\label{eq2017-09-20-4}
\mbox{E}\left[\sup_{\|\xi\| = 1}|B_2|^q\right] \leq c(q)\epsilon^{\frac{2\alpha - 2}{\alpha}q} \quad \mbox{and} \quad \mbox{E}\left[\sup_{\|\xi\| = 1}|B_1^{(1)}|^q\right] \leq c(q)\epsilon^{\frac{2\alpha - 2}{\alpha}q}.
\end{align}
In the same way, we see that
\begin{align}\label{eq2017-09-20-15}
\mbox{E}\left[\sup_{\|\xi\| = 1}|B_1^{(2)}|^q\right] \leq c(t - s + \epsilon)^{\frac{\alpha - 1}{\alpha}q}\epsilon^{\frac{\alpha - 1}{\alpha}q}.
\end{align}
Since we are in the Sub-case A where $t - s \leq \delta^{-1}\epsilon$, we obtain
\begin{align}\label{eq2017-09-20-5}
\mbox{E}\left[\sup_{\|\xi\| = 1}|B_1^{(2)}|^q\right] \leq c(q)\epsilon^{\frac{2\alpha - 2}{\alpha}q}.
\end{align}
We can combine (\ref{eq2017-09-20-4}) and (\ref{eq2017-09-20-5}) as follows:
\begin{align}
\mbox{E}\left[\sup_{\|\xi\| = 1}\left(B_1^{(1)} + B_1^{(2)}\right)^q\right] \leq c(q)\epsilon^{\frac{2\alpha - 2}{\alpha}q}.
\end{align}
Finally, we turn to bounding the absolute moments of $B_1^{(3)}$. Hypothesis \textbf{P1'} assures us that
\begin{align*}
|B_1^{(3)}| &\leq c\int_{s - \epsilon}^sdr\int_{\mathbb{R}}dv \, G_{\alpha}(s - r, y - v)G_{\alpha}(t - r, x - v)\\
 &= c\int_{s - \epsilon}^sdr \, G_{\alpha}(t + s - 2r, x - y)  = c\int_{0}^{\epsilon}dr \, G_{\alpha}(t - s + 2r, x - y),
\end{align*}
thanks to the semi-group property.

When $\alpha = 2$, we can follow the arguments of \cite[p.414]{DKN09} to find that
\begin{align}
\left|B_1^{(3)}\right| \leq c\epsilon^{1/2}\Psi(\beta), \quad \mbox{where} \quad \Psi(\beta) := \beta\int_0^{6/\beta^2}z^{-1/2}e^{-1/z}dz.
\end{align}
Thus,
\begin{align*}
\inf_{\|\xi\| = 1}\xi^T\gamma_Z\xi &\geq \frac{2}{3}(A_1 + A_2 + A_3) - \left(B_1^{(1)} + B_1^{(2)} + \left|B_1^{(3)}\right| + B_2\right)\\
 &\geq c_1\epsilon^{1/2} - c_2\Psi(\beta)\epsilon^{1/2} - Z_{1, \epsilon},
\end{align*}
where $Z_{1, \epsilon} := B_1^{(1)} + B_1^{(2)} + B_2$ satisfies $\mbox{E}[|Z_{1, \epsilon}|^q] \leq c_1(q)\epsilon^q$.
Because $\lim_{\nu \rightarrow \infty}\Psi(\nu) = 0$, we can choose $\beta$ so large that $c_2\Psi(\beta) \leq c_1/4$ for the $c_1$ and $c_2$ of the preceding displayed equation. This yields,
\begin{align} \label{eq2017-11-19-1}
\inf_{\|\xi\| = 1}\xi^T\gamma_Z\xi &\geq c\epsilon^{1/2} - Z_{1, \epsilon}.
\end{align}

When $1 < \alpha < 2$, by the scaling property of the Green kernel \cite[Lemma 4.1(iv)]{ChD15}, and \cite[Lemma 4.1(vi)]{ChD15}, we have
\begin{align*}
\left|B_1^{(3)}\right| &\leq c\int_{0}^{\epsilon}dr (t - s + 2r)^{-1/\alpha}G_{\alpha}(1, (x - y)(t - s + 2r)^{-1/\alpha})\\
&\leq  cK_{\alpha}\int_{0}^{\epsilon}\frac{(t - s + 2r)^{-1/\alpha}}{1 + \left|(x - y)(t - s + 2r)^{-1/\alpha}\right|^{1 + \alpha}} dr\\
&\leq  cK_{\alpha}\int_{0}^{\epsilon}\frac{(t - s + 2r)^{-1/\alpha}}{\left|(x - y)(t - s + 2r)^{-1/\alpha}\right|^{1 + \alpha}} dr\\
&= cK_{\alpha}|x - y|^{-1 - \alpha}\int_{0}^{\epsilon}(t - s + 2r)dr = cK_{\alpha}|x - y|^{-1 - \alpha}[(t - s)\epsilon + \epsilon^2].
\end{align*}
Since $t - s \leq |x - y|^{\alpha}$ and $(1 + \beta)\epsilon^{1/\alpha} < \frac{1}{2}|x - y|$ (since we are in Case 2), this is bounded above by
\begin{align*}
& cK_{\alpha}(|x - y|^{-1}\epsilon + |x - y|^{-1 - \alpha}\epsilon^2)\\
&\quad \leq cK_{\alpha}\left(\frac{1}{(1 + \beta)}\epsilon^{\frac{\alpha - 1}{\alpha}} + \frac{1}{(1 + \beta)^{1 + \alpha}}\epsilon^{2 - (1 + \alpha)/\alpha}\right)\\
&\quad = cK_{\alpha}\left(\frac{1}{(1 + \beta)}+ \frac{1}{(1 + \beta)^{1 + \alpha}}\right)\epsilon^{\frac{\alpha - 1}{\alpha}}.
\end{align*}
Therefore, for $1 < \alpha \leq 2$, we can choose and fix $\beta$ large enough so that
\begin{align}\label{eq2017-11-19-2}
\inf_{\|\xi\| = 1}\xi^T\gamma_Z\xi &\geq c\epsilon^{\frac{\alpha - 1}{\alpha}} - Z_{1, \epsilon},
\end{align}
where for all $q \geq 1$,
\begin{align*}
\mbox{E}\left[|Z_{1,\epsilon}|^q\right] \leq c(q)\epsilon^{\frac{2\alpha - 2}{\alpha}q},
\end{align*}
as in (\ref{eq2017-09-20-3}) and (\ref{eq2017-09-20-6}).

\textit{Sub-case B}. In this final (sub-) case we suppose that $\epsilon < \delta(t - s) \leq \delta|x - y|^{\alpha}$. Choose and fix $0 < \epsilon < \delta(t - s)$. During the course of our proof of Case 1, we established the following:
\begin{align}\label{eq2017-09-20-8}
\inf_{\|\xi\| = 1}\xi^T\gamma_Z\xi &\geq c\epsilon^{\frac{\alpha - 1}{\alpha}} - \tilde{Z}_{\epsilon},
\end{align}
where, for all $q \geq 1$,
\begin{align*}
\mbox{E}\left[|\tilde{Z}_{\epsilon}|^q\right] \leq c\epsilon^{\min((1 + \gamma_0)\frac{\alpha - 1}{\alpha}, 1 - \frac{\gamma_0}{\alpha})q}
\end{align*}
(see (\ref{eq2017-09-20-7}) and (\ref{eq2017-09-20-9})). This inequality remains valid in this Sub-case B.

Combine Sub-Cases A and B, and, in particular, (\ref{eq2017-09-20-3}) and (\ref{eq2017-09-20-8}), to find that for all $0 < \epsilon < 2^{-\alpha}(1 + \beta)^{-\alpha}|x - y|^{\alpha}$,
\begin{align*}
\inf_{\|\xi\| = 1}\xi^T\gamma_Z\xi &\geq c\epsilon^{\frac{\alpha - 1}{\alpha}} -(\tilde{Z}_{\epsilon}\mathbf{1}_{\{\epsilon < \delta(t - s)\}} + Z_{1, \epsilon}\mathbf{1}_{\{t - s \leq \delta^{-1}\epsilon\}}).
\end{align*}
Because of this and (\ref{eq2017-09-20-6}), by \cite[Proposition 3.5]{DKN09}, this implies that
\begin{align*}
\mbox{E}\left[\left(\inf_{ \|\xi\| = 1}\xi^T\gamma_Z\xi\right)^{-2pd}\right] &\leq c|x - y|^{\alpha(-2dp)(\frac{\alpha - 1}{\alpha})}\\
&\leq c\left(|x - y|^{\alpha} + |t - s|\right)^{(\frac{\alpha - 1}{\alpha})(-2dp)}\\
&\leq c\left(|t - s|^{\frac{\alpha - 1}{\alpha}} + |x - y|^{\alpha - 1} \right)^{-2dp}.
\end{align*}
This completes the proof of Proposition \ref{prop2017-09-19-4}.
\end{proof}

\begin{remark}\label{remark2017-11-21-4}
From the proof of Proposition \ref{prop2017-09-19-4}, we see that \eqref{eq2017-09-19-5} is also valid for the solutions of stochastic heat equations with Neumann or Dirichlet boundary conditions, since we can still apply the result of Lemma \ref{lemma2017-09-19-1}; see Remark \ref{remark2017-11-21-3}.
\end{remark}

\begin{proof}[Proof of Proposition \ref{prop2017-09-19-5}]
The proof follows along the same lines as those of \cite[Proposition 6.13]{DKN09}.
Let $0 < \epsilon < s \leq t$. We fix $i_0 \in \{1, \ldots, 2d\}$ and write $\tilde{\lambda}^{i_0} = (\tilde{\lambda}_1^{i_0}, \ldots, \tilde{\lambda}_d^{i_0})$ and $\tilde{\mu}^{i_0} = (\tilde{\mu}_1^{i_0}, \ldots, \tilde{\mu}_d^{i_0})$. We look at $(\xi^{i_0})^T\gamma_Z\xi^{i_0}$ on the event $\{\beta_{i_0} \geq \beta_0\}$. As in the proof of Proposition \ref{prop2017-09-19-4} and using the notation from (\ref{eq2017-09-19-3}), this is bounded below by
\begin{align}\label{eq2017-09-20-10}
&  \sum_{k = 1}^d\int_{s - \epsilon}^sdr\int_{\mathbb{R}}dv\Bigg(\sum_{i = 1}^d\Big[\Big(\beta_{i_0}\tilde{\lambda}_i^{i_0}G_{\alpha}(s - r, y - v) \nonumber\\
& \quad\quad + \tilde{\mu}_i^{i_0}\sqrt{1 - \beta_{i_0}^2}(G_{\alpha}(t - r, x - v) - G_{\alpha}(s - r, y - v))\Big)\sigma_{ik}(u(r, v))\nonumber\\
& \quad \quad+ \beta_{i_0}\tilde{\lambda}_i^{i_0}a_i(k, r, v, s, y)\nonumber\\
& \quad \quad+ \tilde{\mu}_i^{i_0}\sqrt{1 - \beta_{i_0}^2}(a_i(k, r, v, t, x) - a_i(k, r, v, s, y))\Big]\Bigg)^2\nonumber\\
&  \quad+ \sum_{k = 1}^d\int_{s \vee (t - \epsilon)}^tdr\int_{\mathbb{R}}dv\Bigg(\sum_{i = 1}^d\Big[\tilde{\mu}_i^{i_0}\sqrt{1 - \beta_{i_0}^2}G_{\alpha}(t - r, x - v)\sigma_{ik}(u(r, v))\nonumber\\
& \quad \quad + \tilde{\mu}_i^{i_0}\sqrt{1 - \beta_{i_0}^2}a_i(k, r, v, t, x)\Big]\Bigg)^2.
\end{align}
We seek lower bounds for this expression for $0 < \epsilon < \epsilon_0$ where $\epsilon_0 \in \, ]0, \frac{1}{2}[$ is fixed.
In the remainder of this proof, we will use the generic notation $\beta, \tilde{\lambda}$ and $\tilde{\mu}$ for the realizations $\beta_{i_0}(\omega), \tilde{\lambda}^{i_0}(\omega)$, and $\tilde{\mu}^{i_0}(\omega)$.

\textbf{Case 1} $t - s \leq \epsilon$. Then, by the elementary inequality (\ref{eq2017-09-06-2}), the expression in (\ref{eq2017-09-20-10}) is bounded below by
\begin{align*}
\frac{2}{3}(f_1(s, t, \epsilon, \beta, \tilde{\lambda}, \tilde{\mu}, x, y) + f_2(s, t, \epsilon,  \beta, \tilde{\lambda}, \tilde{\mu}, x, y)) - 2I_{\epsilon},
\end{align*}
where, from hypothesis \textbf{P2},
\begin{align}
f_1 &\geq c\rho^2\int_{s - \epsilon}^sdr\int_{\mathbb{R}}dv\Big\|\beta\tilde{\lambda}G_{\alpha}(s - r, y - v) \nonumber \\
& \quad \quad \quad \quad \quad \quad  \quad \quad \quad + \sqrt{1 - \beta^2}\tilde{\mu}(G_{\alpha}(t - r, x - v) - G_{\alpha}(s - r, y - v))\Big\|^2, \label{eq2017-09-20-11}\\
f_2 &\geq c\rho^2\int_{s \vee (t - \epsilon)}^tdr\int_{\mathbb{R}}dv\left\|\tilde{\mu}\sqrt{1 - \beta^2}G_{\alpha}(t - r, x - v)\right\|^2\label{eq2017-09-20-12}
\end{align}
and $I_{\epsilon} = 3(I_{1, \epsilon} + I_{2, \epsilon} + I_{3, \epsilon})$, where
\begin{align*}
I_{1, \epsilon} &:= \sum_{k = 1}^d\int_{s - \epsilon}^sdr\int_{\mathbb{R}}dv\left(\sum_{i = 1}^d\left[\beta\tilde{\lambda}_i - \tilde{\mu}_i\sqrt{1 - \beta^2}\right]a_i(k, r, v, s, y)\right)^2,\\
I_{2, \epsilon} &:= \sum_{k = 1}^d\int_{s - \epsilon}^sdr\int_{\mathbb{R}}dv\left(\sum_{i = 1}^d \tilde{\mu}_i\sqrt{1 - \beta^2}a_i(k, r, v, t, x)\right)^2,\\
I_{3, \epsilon} &:= \sum_{k = 1}^d\int_{t - \epsilon}^tdr\int_{\mathbb{R}}dv\left(\sum_{i = 1}^d \tilde{\mu}_i\sqrt{1 - \beta^2}a_i(k, r, v, t, x)\right)^2.
\end{align*}
There are obvious similarities between the term $I_{1, \epsilon}$ and $B_1^{(1)}$ in (\ref{eq2017-09-20-13}). However, we must keep in mind that $\beta, \tilde{\lambda}$ and $\tilde{\mu}$ are the realizations of $\beta_{i_0}, \tilde{\lambda}^{i_0}$, and $\tilde{\mu}^{i_0}$. Therefore,
\begin{align*}
I_{1, \epsilon} &:= \sum_{k = 1}^d\int_{s - \epsilon}^sdr\int_{\mathbb{R}}dv\left(\sum_{i = 1}^d\left[\beta\tilde{\lambda}_i - \tilde{\mu}_i\sqrt{1 - \beta^2}\right]a_i(k, r, v, s, y)\right)^2\\
 &\leq C\sum_{k = 1}^d\int_{s - \epsilon}^sdr\int_{\mathbb{R}}dv\sum_{i = 1}^da_i^2(k, r, v, s, y).
\end{align*}
Then, we apply the same method that was used to bound $\mbox{E}[|B_1^{(1)}|^q]$ to deduce that $\mbox{E}[|I_{1, \epsilon}|^q] \leq c(q)\epsilon^{\frac{2\alpha - 2}{\alpha}q}$. Similarly, since $I_{2, \epsilon}$ is similar to $B_1^{(2)}$ from \eqref{eq2017-09-20-14} and $t - s \leq \epsilon$, we see using \eqref{eq2017-09-20-15} that $\mbox{E}[|I_{2, \epsilon}|^q] \leq c(q)\epsilon^{\frac{2\alpha - 2}{\alpha}q}$. Finally, using the similarity between $I_{3, \epsilon}$ and $B_2$ in \eqref{eq2017-09-20-16}, we see that $\mbox{E}[|I_{3, \epsilon}|^q] \leq c(q)\epsilon^{\frac{2\alpha - 2}{\alpha}q}$.

We claim that for every $\beta_0 > 0$, there exists $\epsilon_0 > 0$ and $c_0 > 0$ such that
\begin{align} \label{eq2017-09-21-3}
f_1 + f_2 \geq c_0\epsilon^{\frac{\alpha - 1}{\alpha}} \ \mbox{for  all} \  \beta \in [\beta_0, 1], \, \epsilon \in \, ]0, \epsilon_0], \, s, t \in [0, T], \, x, y \in \mathbb{R}.
\end{align}
Using this for the $\beta_0$ from \cite[Lemma 6.8]{DKN09} with $\alpha_0$ there replace by $\beta_0$, this will imply in particular that for $\epsilon \geq t - s$,
\begin{align}
(\xi^{i_0})^T\gamma_Z\xi^{i_0} \geq c_0\epsilon^{\frac{\alpha - 1}{\alpha}} - 2I_{\epsilon},
\end{align}
where $\mbox{E}[|I_{\epsilon}|^q] \leq c(q)\epsilon^{\frac{2\alpha - 2}{\alpha}q}.$

We now prove \eqref{eq2017-09-21-3}. Because $\|\tilde{\lambda}\| = \|\tilde{\mu}\| = 1$, $f_1$ is bounded below by
\begin{align*}
&  c\rho^2\int_{s - \epsilon}^sdr\int_{\mathbb{R}}dv\Big(\beta^2G_{\alpha}^2(s - r, y - v)) + \left(1 - \beta^2\right)(G_{\alpha}(t - r, x - v) - G_{\alpha}(s - r, y - v))^2 \\
& \quad \quad \quad \quad\quad \quad   + 2\beta\sqrt{1 - \beta^2}G_{\alpha}(s - r, y - v))(G_{\alpha}(t - r, x - v) - G_{\alpha}(s - r, y - v))(\tilde{\lambda}\cdot\tilde{\mu})\Big)\\
& \quad =  c\rho^2\int_{s - \epsilon}^sdr\int_{\mathbb{R}}dv\Big(\Big(\beta - \sqrt{1 - \beta^2}\Big)^2G_{\alpha}^2(s - r, y - v) + \left(1 - \beta^2\right)G_{\alpha}^2(t - r, x - v) \\
&  \quad \quad  \quad\quad \quad   + 2\Big(\beta - \sqrt{1 - \beta^2}\Big)\sqrt{1 - \beta^2}G_{\alpha}(s - r, y - v)G_{\alpha}(t - r, x - v)\\
&  \quad \quad  \quad\quad \quad  + 2\beta\sqrt{1 - \beta^2}G_{\alpha}(s - r, y - v))(G_{\alpha}(t - r, x - v) - G_{\alpha}(s - r, y - v))(\tilde{\lambda}\cdot\tilde{\mu} - 1)\Big).
\end{align*}
By the semi-group property \cite[Lemma 4.1(iii)]{ChD15}, we set $h := t - s$ and change the variables to obtain the following bound:
\begin{align*}
f_1 &\geq c\rho^2\int_{0}^{\epsilon}dr\Big(\Big(\beta - \sqrt{1 - \beta^2}\Big)^2G_{\alpha}(2r, 0) + \left(1 - \beta^2\right)G_{\alpha}(2h + 2r, 0) \\
&  \qquad\quad \quad + 2\Big(\beta - \sqrt{1 - \beta^2}\Big)\sqrt{1 - \beta^2}G_{\alpha}(h + 2r, x - y)\\
&  \qquad\quad \quad + 2\beta\sqrt{1 - \beta^2}(G_{\alpha}(h + 2r, x - y) - G_{\alpha}(2r, 0))(\tilde{\lambda}\cdot\tilde{\mu} - 1)\Big).
\end{align*}
Since by the scaling property of Green kernel \cite[Lemma 4.1(iv)]{ChD15}, and \cite[Lemma 4.1(ii)]{ChD15},
\begin{align*}
G_{\alpha}(h + 2r, x - y)& = (h + 2r)^{-1/\alpha}G_{\alpha}(1, (h + 2r)^{-1/\alpha}(x - y)) \\
&\leq  (h + 2r)^{-1/\alpha}G_{\alpha}(1, 0) \leq (2r)^{-1/\alpha}G_{\alpha}(1, 0) = G_{\alpha}(2r, 0),
\end{align*}
 together with $\tilde{\lambda}\cdot\tilde{\mu} - 1 \leq 0$, we see that
\begin{align*}
f_1 \geq c\rho^2\hat{g}_1,
\end{align*}
where
\begin{align*}
\hat{g}_1 &:= \hat{g}_1(h, \epsilon, \beta, x, y)\\
 &= \int_{0}^{\epsilon}dr\Big(\Big(\beta - \sqrt{1 - \beta^2}\Big)^2G_{\alpha}(2r, 0) + \left(1 - \beta^2\right)G_{\alpha}(2h + 2r, 0) \\
 &  \quad + 2\Big(\beta - \sqrt{1 - \beta^2}\Big)\sqrt{1 - \beta^2}G_{\alpha}(h + 2r, x - y)\Big).
\end{align*}
Therefore,
\begin{align*}
\hat{g}_1 &= \int_{0}^{\epsilon}dr\Big(\Big(\beta - \sqrt{1 - \beta^2}\Big)^2r^{-\frac{1}{\alpha}}2^{-\frac{1}{\alpha}}G_{\alpha}(1, 0) + \left(1 - \beta^2\right)(h + r)^{-\frac{1}{\alpha}}2^{-\frac{1}{\alpha}}G_{\alpha}(1, 0)\\
& \quad \quad \quad + 2\Big(\beta - \sqrt{1 - \beta^2}\Big)\sqrt{1 - \beta^2}G_{\alpha}(h + 2r, x - y)\Big).
\end{align*}
On the other hand,
\begin{align*}
f_2 &\geq c\rho^2\int_0^{\epsilon \wedge (t - s)}dr \, (1 - \beta^2)G_{\alpha}(2r, 0) = c\rho^2\hat{g}_2,
\end{align*}
where
\begin{align*}
\hat{g}_2 &:= \int_0^{\epsilon \wedge h}dr \, (1 - \beta^2)G_{\alpha}(2r, 0) = (1 - \beta^2)\frac{\alpha}{\alpha - 1}2^{-\frac{1}{\alpha}}G_{\alpha}(1, 0)(\epsilon \wedge h)^{\frac{\alpha - 1}{\alpha}}.
\end{align*}
Finally, we conclude that
\begin{align}  \label{eq2017-11-19-3}
f_1 + f_2 &\geq c\rho^2(\hat{g}_1 + \hat{g}_2)\nonumber \\
&= c\rho^2\Bigg(\frac{\alpha}{\alpha - 1}2^{-\frac{1}{\alpha}}G_{\alpha}(1, 0)\bigg(\Big(\beta - \sqrt{1 - \beta^2}\Big)^2\epsilon^{\frac{\alpha - 1}{\alpha}} \nonumber\\
& \quad \quad \quad \quad  + \left(1 - \beta^2\right)\left((h + \epsilon)^{\frac{\alpha - 1}{\alpha}} - h^{\frac{\alpha - 1}{\alpha}} + (\epsilon \wedge h)^{\frac{\alpha - 1}{\alpha}}\right)\bigg)\nonumber\\
&  \quad \quad \quad \quad + 2\Big(\beta - \sqrt{1 - \beta^2}\Big)\sqrt{1 - \beta^2}\int_0^{\epsilon}dr\, G_{\alpha}(h + 2r, x - y)\Bigg).
\end{align}
Now we consider two different sub-cases.

\textbf{Sub-case (i).} Suppose $\beta - \sqrt{1 - \beta^2} \geq 0$, that is, $\beta \geq 2^{-1/2}$. Then
\begin{align*}
\epsilon^{-\frac{\alpha - 1}{\alpha}}(\hat{g}_1 + \hat{g}_2) \geq \phi_1\left(\beta, \frac{h}{\epsilon}\right),
\end{align*}
where
\begin{align*}
\phi_1(\beta, z) &:= \frac{\alpha2^{-\frac{1}{\alpha}}}{\alpha - 1}G_{\alpha}(1, 0)\Big((\beta - \sqrt{1 - \beta^2})^2 + \left(1 - \beta^2\right)\left((z + 1)^{\frac{\alpha - 1}{\alpha}} - z^{\frac{\alpha - 1}{\alpha}} + (z \wedge 1)^{\frac{\alpha - 1}{\alpha}}\right)\Big).
\end{align*}
Clearly,
\begin{align*}
\inf_{\beta \geq 2^{-1/2}}\inf_{z \geq 0}\phi_1(\beta, z) &\geq \inf_{\beta \geq 2^{-1/2}}\frac{\alpha}{\alpha - 1}2^{-\frac{1}{\alpha}}G_{\alpha}(1, 0)\left(\left(\beta - \sqrt{1 - \beta^2}\right)^2 + \hat{c}_0\left(1 - \beta^2\right)\right)\\
&> \phi_0 > 0,
\end{align*}
where the value of $\hat{c}_0$ is specified in (\ref{eq2017-09-21-1}).
Thus,
\begin{align*}
\inf_{\beta \geq 2^{-1/2}, h \geq 0, 0 < \epsilon \leq \epsilon_0} \epsilon^{-\frac{\alpha - 1}{\alpha}}(\hat{g}_1 + \hat{g}_2) > 0.
\end{align*}

\textbf{Sub-case (ii).} Now we consider the case where $\beta - \sqrt{1 - \beta^2} < 0$, that is, $\beta < 2^{-1/2}$. In this case, from \eqref{eq2017-11-19-3}, we see that
\begin{align*}
\epsilon^{-\frac{\alpha - 1}{\alpha}}(\hat{g}_1 + \hat{g}_2) \geq \psi_1\left(\beta, \frac{h}{\epsilon}\right),
\end{align*}
where
\begin{align*}
\psi_1(\beta, z) &:= \frac{\alpha2^{-\frac{1}{\alpha}}}{\alpha - 1}G_{\alpha}(1, 0)\bigg((\beta - \sqrt{1 - \beta^2})^2  + (1 - \beta^2)\Big((z + 1)^{\frac{\alpha - 1}{\alpha}} - z^{\frac{\alpha - 1}{\alpha}} + (z \wedge 1)^{\frac{\alpha - 1}{\alpha}}\Big)\\
&  \quad \quad \quad \quad \quad \quad \quad \quad \quad \quad - 2(\sqrt{1 - \beta^2} - \beta)\sqrt{1 - \beta^2}\Big(\Big(\frac{z}{2} + 1\Big)^{\frac{\alpha - 1}{\alpha}}- \Big(\frac{z}{2}\Big)^{\frac{\alpha - 1}{\alpha}}\Big)\bigg).
\end{align*}
We observe that $\psi_1(\beta, z) > 0$ if $\beta \neq 0$ (this observation is similar to that in the lines following \cite[(6.39)]{DKN09}). Denote $c_{\alpha} := \frac{\alpha}{\alpha - 1}2^{-\frac{1}{\alpha}}G_{\alpha}(1, 0)$. For $z \geq 1$, we have
\begin{align*}
\psi_1(\beta, z) &\geq c_{\alpha}\bigg[(\beta - \sqrt{1 - \beta^2})^2 + (1 - \beta^2) - 2(\sqrt{1 - \beta^2} - \beta)\sqrt{1 - \beta^2}\Big(\Big(\frac{3}{2}\Big)^{\frac{\alpha - 1}{\alpha}}- \Big(\frac{1}{2}\Big)^{\frac{\alpha - 1}{\alpha}}\Big)\bigg]\\
 &\geq c_{\alpha}\bigg(1 - \Big(\frac{3}{2}\Big)^{\frac{\alpha - 1}{\alpha}} + \Big(\frac{1}{2}\Big)^{\frac{\alpha - 1}{\alpha}}\bigg)\Big[\Big(\beta - \sqrt{1 - \beta^2}\Big)^2 + \Big(1 - \beta^2\Big)\Big] \geq \bar{c}_0,
\end{align*}
where in the second inequality we use the elementary inequality $2ab \leq a^2 + b^2$.
Then
\begin{align*}
\inf_{\beta \in [\beta_0, 2^{-1/2}]}\inf_{z \geq 0}\psi_1(\beta, z) &\geq \min\left\{\bar{c}_0, \inf_{\beta \in [\beta_0, 2^{-1/2}]}\inf_{z \in [0, 1]}\psi_1(\beta, z)\right\} \geq c_{\beta_0} > 0.
\end{align*}
This concludes the proof of the claim \eqref{eq2017-09-21-3}.

\textbf{Case 2} $t - s > \epsilon$. Choose and fix $\eta > 0$. Following the same lines as in \cite[ p.424-425]{DKN09}, we see that, when $t - s > \epsilon$,
\begin{align*}
1_{\{\beta_{i_0} \geq \beta_0\}}(\xi^{i_0})^T\gamma_Z\xi^{i_0} \geq 1_{\{\beta_{i_0} \geq \beta_0\}} \min\left(c\rho^2\epsilon^{\frac{\alpha - 1}{\alpha} + \eta} - 2I_{3, \epsilon}, c\epsilon^{\frac{\alpha - 1}{\alpha}} - \tilde{J}_{\epsilon}\right),
\end{align*}
where
\begin{align*}
\mbox{E}[|I_{3, \epsilon}|^q] \leq c(q)\epsilon^{\frac{2\alpha - 2}{\alpha}q} \quad \mbox{and} \quad \mbox{E}[|\tilde{J}_{\epsilon}|^q] \leq c(q)\epsilon^{(\frac{\alpha - 1}{\alpha} + \eta)q}.
\end{align*}

Putting together the results of Cases 1 and 2, we see that for $0 < \epsilon \leq \epsilon_0$,
\begin{align*}
1_{\{\beta_{i_0} \geq \beta_0\}}(\xi^{i_0})^T\gamma_Z\xi^{i_0} \geq 1_{\{\beta_{i_0} \geq \beta_0\}}Z,
\end{align*}
where
\begin{align*}
Z = \min\left(c\rho^2\epsilon^{\frac{\alpha - 1}{\alpha} + \eta} - 2I_{3, \epsilon}, c\epsilon^{\frac{\alpha - 1}{\alpha}} - 2I_{ \epsilon}\mathbf{1}_{\{\epsilon \geq t - s\}} - \tilde{J}_{\epsilon}\mathbf{1}_{\{\epsilon < t - s\}}\right).
\end{align*}
Note that all the constants are independent of $i_0$. Taking into account the bounds on moments of $I_{3, \epsilon}, I_{ \epsilon}$ and $\tilde{J}_{\epsilon}$, and then using \cite[Proposition 3.5]{DKN09}, we deduce that for all $p \geq 1$, there is $C > 0$ such that
\begin{align*}
\mbox{E}\left[1_{\{\beta_{i_0} \geq \beta_0\}}\left((\xi^{i_0})^T\gamma_Z\xi^{i_0}\right)^{-p}\right] \leq \mbox{E}\left[1_{\{\beta_{i_0} \geq \beta_0\}}Z^{-p}\right] \leq \mbox{E}[Z^{-p}] \leq C.
\end{align*}
Since this applies to any $p \geq 1$, we can use H\"{o}lder's inequality to deduce (\ref{eq2017-09-19-6}). This proves Proposition \ref{prop2017-09-19-5}.
\end{proof}

\subsection{Proof of Theorem \ref{theorem2017-08-17-1}(b) and Remark \ref{remark2017-11-20-1}(b)}\label{section2.5.2}

Fix two compact intervals $I$ and $J$ as in Theorem \ref{theorem2017-08-17-1}. Let $(s, y), (t, x) \in I \times J, s \leq t, (s, y) \neq (t, x)$, and $z_1, z_2 \in \mathbb{R}^d$. Let $Z$ be as in (\ref{eq2017-09-15-15}) and let $p_Z$ be the density of $Z$. Then
\begin{align*}
p_{s, y; t, x}(z_1, z_2) = p_Z(z_1, z_2 - z_1).
\end{align*}
Use Corollary \ref{cor2017-08-24-1} with $\sigma = \{i \in \{1, \ldots, d\}: z_2^i - z_1^i \geq 0\}$ and H\"{o}lder's inequality to see that
\begin{align}\label{eq2017-09-21-5}
p_Z(z_1, z_2 - z_1) &\leq \prod_{i = 1}^d\left(\mbox{P}\left\{|u_i(t, x) - u_i(s, y)| > |z_1^i - z_2^i|\right\}\right)^{\frac{1}{2d}}\nonumber \\
& \quad \times \|H_{(1, \ldots, 2d)}(Z, 1)\|_{0, 2}.
\end{align}
Therefore, in order to prove the desired results of Theorem \ref{theorem2017-08-17-1}(b) and Remark \ref{remark2017-11-20-1}(b), it suffices to prove that:
\begin{align}
& \|H_{(1, \ldots, 2d)}(Z, 1)\|_{0, 2} \leq c_T(|t - s|^{\frac{\alpha - 1}{\alpha}} + |x - y|^{\alpha - 1})^{-d/2},\label{eq2017-09-21-7}
\end{align}
and
\begin{align}
\prod_{i = 1}^d\left(\mbox{P}\left\{|u_i(t, x) - u_i(s, y)| > |z_1^i - z_2^i|\right\}\right)^{\frac{1}{2d}} \leq c \exp\left(-\frac{\|z_1 - z_2\|^2}{c_T(|t - s|^{\frac{\alpha - 1}{\alpha}} + |x - y|^{\alpha - 1})}\right)\label{eq2017-09-21-6}
\end{align}
under the hypothesis \textbf{P1}, and
\begin{align}
\prod_{i = 1}^d\left(\mbox{P}\left\{|u_i(t, x) - u_i(s, y)| > |z_1^i - z_2^i|\right\}\right)^{\frac{1}{2d}} \leq c \left[\frac{|t - s|^{\frac{\alpha - 1}{\alpha}} + |x - y|^{\alpha - 1}}{\|z_1 - z_2\|^2} \wedge 1\right]^{p/(4d)}\label{eq2017-09-21-60}
\end{align}
under the hypothesis \textbf{P1'}.

The proof of (\ref{eq2017-09-21-6}) under the hypothesis \textbf{P1} is essentially the same as that of \cite[(6.2)]{DKN09}, with $\Delta$ there replaced by $\Delta^2_{\alpha}$, by using Lemma \ref{lemma2017-09-13-1}, the exponential martingale inequality \cite[(A.5)]{Nua06} and Girsanov's theorem. As for the proof of \eqref{eq2017-09-21-60} under the hypothesis \textbf{P1'}, it is analogous to that of \cite[Theorem 1.6(b)]{DKN13}, with $\frac{\gamma}{2}$ there replaced by $\frac{\alpha - 1}{\alpha}$ and $\gamma$ there replaced by $\alpha - 1$. Details can be found in \cite[Section 2.5.3]{Pu18}.

We turn to proving (\ref{eq2017-09-21-7}), which requires the following estimate on the inverse of the matrix $\gamma_Z$.

\begin{theorem}\label{th2017-09-22-1}
Fix $T > 0$.  Assume \textbf{P1'} and \textbf{P2}. Let $I$ and $J$ be compact intervals as in Theorem \ref{theorem2017-08-17-1}. For any $(s, y), (t, x) \in I \times J, s \leq t, (s, y) \neq (t, x), k \geq 0$ and  $p > 1$,
\begin{align}\label{eq2017-09-22-1}
\mbox{E}\left[\|(\gamma_Z)^{-1}_{m, l}\|_{k, p}\right]
 \leq
\left\{\begin{array}{ll}
    c_{k, p, T} & \hbox{if $(m, l) \in (\mathbf{1})$,} \\
    c_{k, p, T} (|t - s|^{\frac{\alpha - 1}{\alpha}} + |x - y|^{\alpha - 1})^{-\frac{1}{2}}   & \hbox{if $(m, l) \in (\mathbf{2})$ or $(\mathbf{3})$,} \\
    c_{k, p, T} (|t - s|^{\frac{\alpha - 1}{\alpha}} + |x - y|^{\alpha - 1})^{-1} & \hbox{if $(m, l) \in (\mathbf{4})$.}
  \end{array}
\right.
\end{align}
\end{theorem}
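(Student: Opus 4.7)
I would follow the strategy of \cite[Theorem 6.3]{DKN09}, combined with the refined block-wise bounds already established in this subsection. Set $\delta := |t - s|^{\frac{\alpha - 1}{\alpha}} + |x - y|^{\alpha - 1}$, and assign to each index $i \in \{1, \ldots, 2d\}$ the weight $w_i := 0$ if $i \leq d$ and $w_i := 1/2$ if $i > d$. In this notation, Propositions \ref{prop2017-09-19-1}, \ref{prop2017-09-19-2} and \ref{prop2017-09-19-3} assert that for every $p > 1$ and $k \geq 0$,
\[
\mathrm{E}\bigl[|(A_Z)_{m,l}|^p\bigr]^{1/p} \leq c\,\delta^{d - w_m - w_l}, \quad \mathrm{E}\bigl[\|D^k(\gamma_Z)_{m,l}\|_{\mathscr{H}^{\otimes k}}^p\bigr]^{1/p} \leq c\,\delta^{w_m + w_l}, \quad \mathrm{E}\bigl[(\det \gamma_Z)^{-p}\bigr]^{1/p} \leq c\,\delta^{-d},
\]
while the target bound \eqref{eq2017-09-22-1} is exactly $\mathrm{E}\bigl[\|(\gamma_Z)^{-1}_{m,l}\|_{k,p}\bigr] \leq c\,\delta^{-(w_m + w_l)}$.

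I would first dispatch the case $k = 0$. By Cramer's rule, $(\gamma_Z)^{-1}_{m,l} = (A_Z)_{l,m}/\det \gamma_Z$, so a single application of H\"older's inequality combining the first and third estimates above immediately yields the required $L^p$-bound.

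For $k \geq 1$, I would proceed by induction on $k$. The starting point is the matrix identity obtained by differentiating $(\gamma_Z)^{-1}\gamma_Z = \mathrm{Id}$, namely
\[
D\bigl[(\gamma_Z)^{-1}\bigr]_{m,l} = -\sum_{p,q = 1}^{2d} (\gamma_Z)^{-1}_{m,p}\, D(\gamma_Z)_{p,q}\, (\gamma_Z)^{-1}_{q,l}.
\]
Iterating this identity together with the Leibniz rule expresses $D^{k}\bigl[(\gamma_Z)^{-1}_{m,l}\bigr]$ as a finite sum of products, each consisting of $k+1$ factors of the form $(\gamma_Z)^{-1}_{\cdot,\cdot}$ and $k$ factors of the form $D^{j_i}(\gamma_Z)_{\cdot,\cdot}$ with $j_i \geq 1$ and $\sum_i j_i = k$, whose indices are contracted along a chain running from $m$ to $l$. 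H\"older's inequality then reduces the estimation to a product of factor-wise bounds: the derivative factors are controlled by Proposition \ref{prop2017-09-19-2} (contributing $\delta^{w_p + w_q}$ per factor $D^{j}(\gamma_Z)_{p,q}$), while each inverse factor $(\gamma_Z)^{-1}_{a,b}$ is controlled by the inductive hypothesis applied at strictly lower order, contributing $\delta^{-(w_a + w_b)}$.

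The principal technical point, which is the main obstacle, is the \emph{telescoping} of weights along the chain of contracted indices. Every interior index appears exactly once as an index of an inverse factor and exactly once as an index of a derivative factor, so its weight contribution cancels, and the total exponent of $\delta$ collapses to $-(w_m + w_l)$, independently of the combinatorial configuration of the term. Summing over the finitely many terms produced by the iteration closes the induction and delivers \eqref{eq2017-09-22-1} at level $k$.
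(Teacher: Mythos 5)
Your proof takes essentially the same route as the paper: the paper also handles $k=0$ via Cramer's rule $(\gamma_Z)^{-1}_{m,l} = (A_Z)_{l,m}/\det\gamma_Z$ together with Propositions \ref{prop2017-09-19-1} and \ref{prop2017-09-19-3}, and then proceeds recursively for $k \geq 1$ exactly along the lines you describe, invoking Proposition \ref{prop2017-09-19-2} (following \cite[Theorem 6.3]{DKN09}). Your weight notation $w_i$ and the telescoping observation neatly explain why the exponents close up, and the one small imprecision (the number of inverse and derivative factors in a term of $D^k[(\gamma_Z)^{-1}]$ is not always exactly $k+1$ and $k$, since some $j_i$ may exceed $1$) does not affect the argument, because the chain structure and hence the cancellation of interior weights persist regardless.
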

\begin{proof}
As in the proof of \cite[Theorem 6.3]{DKN09}, we shall use Propositions \ref{prop2017-09-19-1}, \ref{prop2017-09-19-2} and \ref{prop2017-09-19-3}.

When $k = 0$, the result is a consequence of the estimates of Propositions \ref{prop2017-09-19-1} and \ref{prop2017-09-19-3}, using the fact that the inverse of a matrix is the inverse of its determinant multiplied by its cofactor matrix. Comparing to the proof of \cite[Theorem 6.3(a)]{DKN09}, the extra exponent $\eta$ does not appear due to the optimal estimate of negative moments of $\det \gamma_Z$ in Proposition \ref{prop2017-09-19-3}.

For $k \geq 1$, we proceed recursively as in the proof of \cite[Theorem 6.3]{DKN09}, using Proposition  \ref{prop2017-09-19-2} instead of Proposition \ref{prop2017-09-19-1}.
\end{proof}

\begin{proof}[Proof of (\ref{eq2017-09-21-7})]
The proof is similar to that of \cite[(6.3)]{DKN09} by using the continuity of the Skorohod integral $\delta$ (see \cite[Proposition 3.2.1]{Nua06} and \cite[(1.11) and p.131]{Nua98}) and H\"{o}lder's inequality for Malliavin norms (see \cite[Proposition 1.10, p.50]{Wat84}); the main difference is that $\Delta$ there is replaced by $\Delta^2_{\alpha}$. Comparing with the estimate in \cite[(6.3)]{DKN09}, we are able to remove the extra exponent $\eta$ because of the correct estimate on the inverse of the matrix $\gamma_Z$ in Theorem \ref{th2017-09-22-1}.
\end{proof}

\begin{remark}\label{remark2017-11-21-6}
We conclude this section by remarking that \eqref{eq2017-09-21-7} is also valid for the solutions of stochastic heat equations with Neumann or Dirichlet boundary conditions, since the result of Theorem \ref{th2017-09-22-1} is true in that case by applying Proposition \ref{prop2017-09-19-3}; see Remark \ref{remark2017-11-21-5}.
\end{remark}

\section{Proof of Theorems \ref{theorem2017-08-18-1} and \ref{theorem2017-08-18-2}} \label{section1111}

In this section, we give the proof of Theorems \ref{theorem2017-08-18-1} and \ref{theorem2017-08-18-2}. The organization of the proof is similar to \cite[Section 2.3]{DKN13}.

\subsection{Proof of Theorem \ref{theorem2017-08-18-1}: upper bounds}

For all positive integers $n$, set
\begin{align*}
t_k^n := k2^{-\frac{2n\alpha}{\alpha - 1}},  \quad x_l^n := l2^{-\frac{2n}{\alpha - 1}}
\end{align*}and
\begin{align*}
I_k^n = [t_k^n, t_{k + 1}^n], \quad  J_l^n = [x_l^n, x_{l + 1}^n], \quad  R_{k, l}^n = I_k^n \times J_l^n.
\end{align*}
By \eqref{eq2017-08-21-1}, we have
\begin{align}\label{eq2017-08-21-2}
\mbox{E}\Big[\sup_{(t, x) \in R_{k, l}^n}\|u(t, x) - u(t_k^n, x_l^n)\|^p\Big] \leq C2^{-n\beta p},
\end{align}
where $\beta$ is chosen as in \eqref{eq2017-08-21-1}.

\begin{lemma} \label{lemma2017-08-18-1}
Fix $\eta > 0$. There exists $c > 0$ such that for all $z \in \mathbb{R}^d$, $n$ large and $R_{k, l}^n \subset I \times J$,
\begin{align}\label{eq2017-08-18-4}
\mbox{P}\left\{u(R_{k, l}^n) \cap B(z, 2^{-n}) \neq \emptyset \right\} \leq c 2^{-n(d - \eta)}.
\end{align}
\end{lemma}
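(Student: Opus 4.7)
The plan is to decompose the hitting event according to the size of the oscillation of $u$ across the rectangle $R_{k,l}^n$. I would introduce an enlarged radius $r_n := 2^{-n(1 - \eta/d)}$, so that $r_n \geq 2^{-n}$ for $n$ large. If some $(t,x) \in R_{k,l}^n$ satisfies $\|u(t,x) - z\| \leq 2^{-n}$, then either the oscillation $\sup_{(t,x) \in R_{k,l}^n} \|u(t,x) - u(t_k^n, x_l^n)\|$ exceeds $r_n$, or the value at the corner satisfies $\|u(t_k^n, x_l^n) - z\| \leq 2^{-n} + r_n \leq 2 r_n$. Hence
\[
\bigl\{u(R_{k,l}^n) \cap B(z, 2^{-n}) \neq \emptyset\bigr\}
\subseteq
\bigl\{u(t_k^n, x_l^n) \in B(z, 2 r_n)\bigr\}
\cup
\Bigl\{\sup_{(t,x) \in R_{k,l}^n} \|u(t,x) - u(t_k^n, x_l^n)\| > r_n\Bigr\}.
\]

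For the first event, Theorem \ref{theorem2017-08-17-1}(a) provides a bound on the density $p_{t_k^n, x_l^n}(\cdot)$ that is uniform over $(t_k^n, x_l^n) \in I \times J$ and over $z \in \mathbb{R}^d$, so integration of the density over $B(z, 2 r_n)$ gives $\mathrm{P}\{u(t_k^n, x_l^n) \in B(z, 2 r_n)\} \leq c\, r_n^d = c\, 2^{-n(d - \eta)}$. For the second event, Markov's inequality combined with the moment estimate \eqref{eq2017-08-21-2} yields
\[
\mathrm{P}\Bigl\{\sup_{(t,x) \in R_{k,l}^n} \|u(t,x) - u(t_k^n, x_l^n)\| > r_n\Bigr\}
\leq C\, 2^{-n\beta p}\, r_n^{-p}
= C\, 2^{-n(\beta + \eta/d - 1) p}.
\]
Since one may take any $\beta < \beta_p = 1 - \frac{2(\alpha+1)}{p(\alpha-1)}$, by choosing $p$ sufficiently large (depending on $\eta$, $d$, $\alpha$) and $\beta$ close enough to $\beta_p$, the exponent $(\beta + \eta/d - 1)p$ can be made at least $d - \eta$, producing the matching $c\, 2^{-n(d - \eta)}$ bound.

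The main obstacle is that the $\Delta_\alpha$-H\"{o}lder exponent $\beta$ of $u$ is always strictly less than $1$, so Chebyshev's inequality applied directly to the oscillation against the radius $2^{-n}$ gives a bound that grows in $n$. Enlarging the target ball from radius $2^{-n}$ to $r_n = 2^{-n(1 - \eta/d)}$---at the cost of a factor $2^{n\eta/d}$ in the density bound---provides exactly the slack needed for the Chebyshev--H\"{o}lder estimate to decay at rate $2^{-n(d-\eta)}$, and this enlargement is the source of the $\eta$ loss in \eqref{eq2017-08-18-4}.
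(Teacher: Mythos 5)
Your proof is correct and follows the same two-event decomposition that the paper (via its citation of \cite[Theorem~3.3]{DKN09} and \cite[Lemma~2.2]{DKN13}) has in mind: split the hitting event according to whether the corner value lands in an enlarged ball of radius $r_n = 2^{-n(1-\eta/d)}$ or the modulus of continuity over $R_{k,l}^n$ exceeds $r_n$, then control the first probability by the uniform density bound of Theorem~\ref{theorem2017-08-17-1}(a) and the second by Chebyshev together with \eqref{eq2017-08-21-2} after pushing $p$ large and $\beta$ close to $\beta_p$. The exponent bookkeeping is sound: with $\beta = 1-\eta/(2d)$ (admissible once $p > 4d(\alpha+1)/(\eta(\alpha-1))$), the second term decays as $2^{-n\eta p/(2d)}$, which beats $2^{-n(d-\eta)}$ for $p$ large enough.
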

\begin{proof}
The proof is a similar to that of \cite[Theorem 3.3]{DKN09}, using Theorem \ref{theorem2017-08-17-1}(a) and \eqref{eq2017-08-21-2}; see also \cite[Lemma 2.2]{DKN13}. The details are left to the reader.
\end{proof}
\begin{proof}[Proof of Theorem \ref{theorem2017-08-18-1}: upper bounds]
We start by proving the upper bound on hitting probability in Theorem \ref{theorem2017-08-18-1}(a).
Fix $\epsilon \in \, ]0, 1[$ and $n \in \mathbb{N}$ such that $2^{-n - 1} < \epsilon \leq 2^{-n}$, and write
\begin{align*}
\mbox{P}\left\{u(I \times J) \cap B(z, \epsilon) \neq \emptyset \right\} \leq \sum_{(k, l): R_{k, l}^n \cap I \times J \neq \emptyset}\mbox{P}\left\{u(R_{k, l}^n) \cap B(z, 2^{-n}) \neq \emptyset \right\}.
\end{align*}
The number of pairs $(k, l)$ involved in the sum is at most $2^{2n(\alpha + 1)/(\alpha - 1)}$ times a constant. Lemma \ref{lemma2017-08-18-1} implies that for all $z \in A$, $\eta > 0$ and large $n$,
\begin{align} \label{eq2017-12-15-1}
\mbox{P}\left\{u(I \times J) \cap B(z, \epsilon) \neq \emptyset \right\} &\leq \tilde{C}2^{-n(d - \eta)}2^{\frac{2n(\alpha + 1)}{\alpha - 1}} \leq C \epsilon^{d - \frac{2(\alpha + 1)}{\alpha - 1} - \eta}.
\end{align}
Note that $C$ does not depend on $(n, \epsilon)$. Therefore,  \eqref{eq2017-12-15-1} is valid for all $\epsilon \in \, ]0, 1[$.

Now we use the same covering argument as in the end of the proof of Theorem 1.2(a) in \cite[p.104]{DKN13} to conclude that the upper bound in Theorem \ref{theorem2017-08-18-1}(a) holds.

The proof of the upper bounds on hitting probabilities in Theorem \ref{theorem2017-08-18-1}(b) and (c) is similar; see also \cite[Theorem 3.1(2), (3)]{DKN07}.
\end{proof}

\subsection{Proof of Theorem \ref{theorem2017-08-18-1}: lower bounds} \label{section2.2.3}

The proof is similar to that of \cite[Theorem 2.1]{DKN07}; see also \cite[Section 2.4]{DKN13}, which requires the following lemma analogous to \cite[Lemma 2.3]{DKN13}.
\begin{lemma}\label{lemma2017-11-20-5}
 Fix $N > 0$ and $\beta > 0$.
\begin{itemize}
  \item [(a)]For $p > 4d(\frac{d}{2} - \frac{2}{\alpha - 1} - 1)$, there exists a finite and positive constant $C = C(I, J, d, N, p, \alpha)$ such that for all $a \in [0, N]$,
\begin{align}\label{eq2017-08-23-30}
& \int_Idt\int_Ids\int_Jdx\int_Jdy (\Delta_{\alpha}((t, x); (s, y)))^{-d}\left[\frac{(\Delta_{\alpha}((t, x); (s, y)))^{2}}{a^2} \wedge 1\right]^{p/(4d)}\nonumber \\
& \quad \quad \leq C \,  \mbox{K}_{d - \frac{2(\alpha + 1)}{\alpha - 1}}(a).
\end{align}
\item [(b)] For $p > 4d(\frac{d}{2} - \frac{1}{\beta})$, there exists a finite and positive constant $C = C(I, d, N, p, \beta)$ such that for all $a \in [0, N]$,
\begin{align}\label{eq2017-08-23-301100}
& \int_Idt\int_Ids\, |t - s|^{-\frac{d\beta}{2}}\left[\frac{|t - s|^{\beta}}{a^2} \wedge 1\right]^{p/(4d)}\leq C \,  \mbox{K}_{d - \frac{2}{\beta}}(a).
\end{align}
\end{itemize}
\end{lemma}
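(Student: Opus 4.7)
The plan is to reduce each multi-dimensional integral to a one-dimensional radial integral in the fractional parabolic metric and then compute explicitly. Parts (a) and (b) follow the same template; only the dimension of the integration and the substitution that linearizes the metric differ.

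For part (a), I would first observe that the integrand depends only on $\tau := |t-s|$ and $\xi := |x-y|$, so Fubini and a shift reduce the quadruple integral on $I \times I \times J \times J$ to a constant multiple of
\begin{equation*}
\int_0^{T_0}\!\int_0^{X_0}\bigl(\tau^{(\alpha-1)/(2\alpha)} + \xi^{(\alpha-1)/2}\bigr)^{-d}\left[\frac{(\tau^{(\alpha-1)/(2\alpha)} + \xi^{(\alpha-1)/2})^{2}}{a^{2}} \wedge 1\right]^{p/(4d)}\!d\tau\,d\xi,
\end{equation*}
with $T_0 = |I|$, $X_0 = |J|$. The substitution $u = \tau^{(\alpha-1)/(2\alpha)}$, $v = \xi^{(\alpha-1)/2}$ linearizes the metric to $u+v$ and produces the Jacobian factor $c_\alpha\, u^{(\alpha+1)/(\alpha-1)} v^{(3-\alpha)/(\alpha-1)}$. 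Passing to polar-type coordinates $w = u+v$, $r = v/w \in [0,1]$, the $r$-integral becomes a finite Beta integral (both exponents exceed $-1$), and collecting the $w$-powers ($-d$ from the metric, $4/(\alpha-1)$ from the two Jacobians, $+1$ from the polar measure) yields
\begin{equation*}
C \int_0^{W_0} w^{-(\beta_0 + 1)}\left[\frac{w^{2}}{a^{2}} \wedge 1\right]^{p/(4d)} dw, \qquad \beta_0 := d - \tfrac{2(\alpha+1)}{\alpha-1}.
\end{equation*}

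Then I would rescale $w = as$ and split at $s=1$. The near-zero integral $\int_0^{1} s^{p/(2d) - \beta_0 - 1}\,ds$ is finite precisely when $p/(2d) > \beta_0$, i.e., $p > 2d\beta_0 = 4d(\tfrac{d}{2} - \tfrac{2}{\alpha-1} - 1)$, which is the hypothesis. A three-case analysis on the sign of $\beta_0$ then evaluates the far-field integral $\int_1^{W_0/a} s^{-\beta_0-1}\,ds$: when $\beta_0 > 0$ it contributes a bounded constant, giving $a^{-\beta_0}$; when $\beta_0 = 0$ it produces $\log(W_0/a) \asymp \log_+(1/a)$; when $\beta_0 < 0$ it behaves like $(W_0/a)^{-\beta_0}$ and cancels the prefactor $a^{-\beta_0}$ to give a bounded constant. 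In each case one recovers the correct branch of $\mbox{K}_{\beta_0}(a)$ in \eqref{eq2017-08-23-100}, with the dependence on $N$ entering only through the upper cut-off $W_0$ and the ``$\wedge 1$''.

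Part (b) is strictly simpler. A single substitution $\tau = u^{2/\beta}$ turns the one-dimensional analogue of the above integral into
\begin{equation*}
a^{-\tilde\beta_0}\int_0^{U_0/a} t^{-\tilde\beta_0 - 1}(t^{2} \wedge 1)^{p/(4d)}\,dt, \qquad \tilde\beta_0 := d - \tfrac{2}{\beta},
\end{equation*}
after rescaling, and the same three-case analysis applies, with the hypothesis $p > 4d(\tfrac{d}{2} - \tfrac{1}{\beta})$ ensuring integrability at zero. The main obstacle is purely bookkeeping: identifying the effective exponent $\beta_0$ (resp.\ $\tilde\beta_0$) after two (resp.\ one) substitutions and the polar change, and then handling the sign cases uniformly in $a \in [0, N]$. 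The borderline case $\beta_0 = 0$ is where the logarithmic kernel emerges, and care is needed to verify that the constant $C$ depends on $N$ through the upper cut-off but not on $a$.
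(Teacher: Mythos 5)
Your argument is correct and follows essentially the same route as the paper: reduce to a double integral in $(|t-s|,|x-y|)$, linearize the fractional parabolic metric via a power substitution, pass to polar-type coordinates so that the angular part is a convergent Beta integral, and then split the radial integral and run the three-case analysis on the sign of $d - \frac{2(\alpha+1)}{\alpha-1}$ (resp.\ $d - \frac{2}{\beta}$). The only cosmetic difference is that you carry out the rescaling by $a$ at the very end (setting $w = as$) rather than building the $a$-dependence into the initial substitution as the paper does with $\tilde u = (ua^2)^{\alpha/(\alpha-1)}$, $\tilde v = (va^2)^{1/(\alpha-1)}$; both lead to the same effective radial exponent $-(\beta_0+1)$ and the same integrability threshold $p > 2d\beta_0 = 4d(\frac{d}{2} - \frac{2}{\alpha-1} - 1)$.
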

\begin{proof} We start by proving (a). Using the change of variables $\tilde{u} = t - s$ ($t$ fixed), $\tilde{v} = x - y$ ($x$ fixed), we see that  the integral on the left-hand side of \eqref{eq2017-08-23-30} is bounded above by
\begin{align*}
4|I||J|\int_0^{|I|}d\tilde{u}\int_0^{|J|}d\tilde{v} \, (\tilde{u}^{\frac{\alpha - 1}{2\alpha}} + \tilde{v}^{\frac{\alpha - 1}{2}})^{-d}\left[\frac{(\tilde{u}^{\frac{\alpha - 1}{2\alpha}} + \tilde{v}^{\frac{\alpha - 1}{2}})^{2}}{a^2} \wedge 1\right]^{p/(4d)}.
\end{align*}
Another change of variables $[\tilde{u} = (ua^2)^{\alpha/(\alpha - 1)}, \tilde{v} = (va^2)^{1/(\alpha - 1)}]$ implies that this is less than
\begin{align*}
Ca^{\frac{2\alpha + 2}{\alpha - 1} - d}\int_0^{|I|^{(\alpha - 1)/\alpha}a^{-2}}du\int_0^{|J|^{\alpha - 1}a^{-2}}dv \, \frac{u^{1/(\alpha - 1)}v^{(2 - \alpha)/(\alpha - 1)}}{(u + v)^{d/2}}[(u + v) \wedge 1]^{p/(4d)}.
\end{align*}
Passing to the polar coordinates, this is bounded above by
\begin{align}\label{eq2017-11-20-6}
Ca^{\frac{2\alpha + 2}{\alpha - 1} - d}(I_1 + I_2(a)),
\end{align}
where
\begin{align*}
I_1 &= \int_0^{\bar{K}N^{-2}}d\rho \, \rho^{\frac{2}{\alpha - 1} - \frac{d}{2}}\rho^{p/(4d)}\quad \mbox{and} \quad
I_2(a) = \int_{\bar{K}N^{-2}}^{\bar{K}a^{-2}}d\rho \, \rho^{\frac{2}{\alpha - 1} - \frac{d}{2}}
\end{align*}
with $\bar{K} = (|I|^{2(\alpha - 1)/\alpha} + |J|^{2(\alpha - 1)})^{1/2}$. Clearly, $I_1 \leq C < \infty$ since $\frac{2}{\alpha - 1} - \frac{d}{2} + \frac{p}{4d} > -1$ by the hypothesis on $p$. Moreover, if $\frac{2}{\alpha - 1} - \frac{d}{2} + 1 \neq 0$, i.e., $\frac{2(\alpha + 1)}{\alpha - 1} \neq d$, then
\begin{align}\label{eq2017-11-20-7}
I_2(a) = \bar{K}^{(\alpha + 1)/(\alpha - 1) - d/2}\frac{a^{d - 2(\alpha + 1)/(\alpha - 1)} - N^{d - 2(\alpha + 1)/(\alpha - 1)}}{(\alpha + 1)/(\alpha - 1) - d/2}.
\end{align}
There are three separate cases to consider. (i) If $\frac{2(\alpha + 1)}{\alpha - 1} < d$, then $I_2(a) \leq C < \infty$ for all $a \in [0, N]$. (ii) If $\frac{2(\alpha + 1)}{\alpha - 1} > d$, then $I_2(a) \leq c a^{d - 2(\alpha + 1)/(\alpha - 1)}$. (iii) If $\frac{2(\alpha + 1)}{\alpha - 1} = d$, then
\begin{align} \label{eq2017-12-13-1}
I_2(a) = 2(\log\frac{1}{a} + \log N) \leq (2\log N  + 2)\log_+(\frac{1}{a}).
\end{align}
We combine these observations to conclude that the expression in \eqref{eq2017-11-20-6} is bounded above by $C\, K_{d - \frac{2(\alpha + 1)}{\alpha - 1}}(a)$.

Next we prove (b). Fix $t$ and change variables $[u = t - s]$ to see that
\begin{align}\label{eq2017-08-23-30110011}
& \int_Idt\int_Ids\, |t - s|^{-\frac{d\beta}{2}}\left[\frac{|t - s|^{\beta}}{a^2} \wedge 1\right]^{p/(4d)}\leq 2\int_0^{|I|}du \, u^{-\frac{d\beta}{2}}\left[\frac{u^{\beta}}{a^2} \wedge 1\right]^{p/(4d)}.
\end{align}
Another change of variables $[u = a^{2/\beta}v^{1/\beta}]$ simplifies this expression to
\begin{align*}
C\, a^{\frac{2}{\beta} - d}\int_0^{|I|^{\beta}a^{-2}}dv \, v^{\frac{1}{\beta} - \frac{d}{2} - 1}\left[v \wedge 1\right]^{p/(4d)}.
\end{align*}
Observe that
\begin{align*}
\int_0^{|I|^{\beta}a^{-2}}dv \, v^{\frac{1}{\beta} - \frac{d}{2} - 1}\left[v \wedge 1\right]^{p/(4d)} \leq I_1 + I_2(a),
\end{align*}
where
\begin{align*}
I_1: = \int_0^{|I|^{\beta}N^{-2}}dv \, v^{\frac{1}{\beta} - \frac{d}{2} - 1 + \frac{p}{4d}} \quad \mbox{and} \quad I_2(a): = \int_{|I|^{\beta}N^{-2}}^{|I|^{\beta}a^{-2}}dv \, v^{\frac{1}{\beta} - \frac{d}{2} - 1}.
\end{align*}
Clearly, $I_1 \leq C < \infty$ provided that $p > 4d(\frac{d}{2} - \frac{1}{\beta})$. The remainder of the proof is the same as that of (a).
\end{proof}

\begin{proof}[Proof of Theorem \ref{theorem2017-08-18-1}: lower bounds]
We start by proving the lower bound on hitting probabilities in (a).
The proof follows along the same lines as the proof of \cite[Theorem 2.1(1)]{DKN07}, therefore we will only sketch the steps that differ; see also the proof of \cite[Theorem 1.2(b)]{DKN13}. We need to replace their $\beta - 6$ by $d - \frac{2(\alpha + 1)}{\alpha - 1}$.

We first note that our Theorems \ref{theorem2017-08-17-1}(a), \ref{th2018-04-26-1} and Remark \ref{remark2017-11-20-1} imply that
\begin{align} \label{eq2017-08-23-4}
\inf_{\|z\| \leq M}\int_Idt\int_Jdx\, p_{t, x}(z) \geq C >0,
\end{align}
which proves hypothesis \textbf{A1'} of \cite[Theorem 2.1(1)]{DKN07} (see \cite[Remark 2.5(a)]{DKN07}).

Let us now follow the proof of \cite[Theorem 2.1(1)]{DKN07}. Define, for all $z \in \mathbb{R}^d$ and $\epsilon > 0$, $\tilde{B}(z, \epsilon) := \{y \in \mathbb{R}^d: |y - z| < \epsilon\}$, where $|z| := \max_{1 \leq j \leq d}|z_j|$, and
\begin{align} \label{eq2017-08-23-5}
J_{\epsilon}(z) = \frac{1}{(2\epsilon)^d}\int_Idt\int_Jdx \, \mathbf{1}_{\tilde{B}(z, \epsilon)}(u(t, x)),
\end{align}
as in \cite[(2.28)]{DKN07}.

Assume first that $d < \frac{2(\alpha + 1)}{\alpha - 1}$. Using Remark \ref{remark2017-11-20-1}(b), we find, instead of \cite[(2.30)]{DKN07},
\begin{align*}
\mbox{E}[(J_{\epsilon}(z))^2] \leq c \int_Idt\int_Ids\int_Jdx\int_Jdy \, [\Delta_{\alpha}((t, x); (s, y))]^{-d}.
\end{align*}
The change of variables $u = t - s$ ($t$ fixed), $v = x - y$ ($x$ fixed), implies that the above integral is bounded above by
\begin{align} \label{eq2017-08-23-6}
 C\int_0^{|I|}du\int_0^{|J|}dv \left(u^{\frac{\alpha - 1}{2\alpha}} + v^{\frac{\alpha - 1}{2}}\right)^{-d} \leq C'\int_0^{|I|}du \, \Psi_{|J|, (\alpha - 1)d/2}(u^{(\alpha - 1)d/(2\alpha)}),
\end{align}
where $\Psi$ is defined by
$
\Psi_{a, \nu}(\rho) := \int_0^a\frac{dx}{\rho + x^{\nu}},
$
for all $a, \nu, \rho > 0$, as in (2.23) of \cite{DKN07}. Hence, by Lemma 2.3 of \cite{DKN07}, for all $\epsilon > 0$,
\begin{align*}
\mbox{E}\left[(J_{\epsilon}(z))^2\right] \leq C \int_0^{|I|}du \, \mbox{K}_{1 - \frac{2}{(\alpha - 1)d}}(u^{(\alpha - 1)d/(2\alpha)}).
\end{align*}
In order to bound the above integral, we consider three different cases: (i) If $0 < d < \frac{2}{\alpha - 1}$, then $1 - \frac{2}{(\alpha - 1)d} < 0$ and the integral equals $|I|$. (ii) If $\frac{2}{\alpha - 1} < d < \frac{2(\alpha + 1)}{\alpha - 1}$, then $\mbox{K}_{1 - \frac{2}{(\alpha - 1)d}}(u^{(\alpha - 1)d/(2\alpha)}) = u^{1/\alpha - (\alpha - 1)d/(2\alpha)}$ and the integral is finite. (iii) If $d = \frac{2}{\alpha - 1}$, then $\mbox{K}_0(u^{1/\alpha}) = \log_+(u^{-1/\alpha})$ and the integral is also finite. The remainder of the proof of Theorem \ref{theorem2017-08-18-1}(a) when $d < \frac{2(\alpha + 1)}{\alpha - 1}$ follows exactly as in \cite[Theorem 2.1(1) Case 1]{DKN07}.

Assume now that $d > \frac{2(\alpha + 1)}{\alpha - 1}$. Define, for all $\mu \in \mathscr{P}(A)$ and $\epsilon > 0$,
\begin{align} \label{eq2017-08-23-7}
J_{\epsilon}(\mu) = \frac{1}{(2\epsilon)^d}\int_{\mathbb{R}^d}\mu(dz)\int_Idt\int_Jdx \, \mathbf{1}_{\tilde{B}(z, \epsilon)}(u(t, x)),
\end{align}
as \cite[(2.35)]{DKN07}. Fix $\mu \in \mathscr{P}(A)$ such that
\begin{align*}
I_{d - \frac{2(\alpha + 1)}{\alpha - 1}}(\mu) \leq \frac{2}{\mbox{Cap}_{d - \frac{2(\alpha + 1)}{\alpha - 1}}(A)}.
\end{align*}
Analogous to the proof of \cite[(2.41)]{DKN07}, we use Remark \ref{remark2017-11-20-1}(b) and Lemma \ref{lemma2017-11-20-5}(a), to see that for all $\epsilon > 0$
\begin{align*}
\mbox{E}\left[(J_{\epsilon}(\mu))^2\right] \leq C_2 I_{d - \frac{2(\alpha + 1)}{\alpha - 1}}(\mu) \leq \frac{2C_2}{\mbox{Cap}_{d - \frac{2(\alpha + 1)}{\alpha - 1}}(A)}.
\end{align*}
The remainder of the proof of Theorem \ref{theorem2017-08-18-1}(a) when $d > \frac{2(\alpha + 1)}{\alpha - 1}$ follows as in \cite[Theorem 2.1(1) Case 2]{DKN07}.

The case $d = \frac{2(\alpha + 1)}{\alpha - 1}$ is proved exactly along the same lines as the proof of \cite[Theorem 2.1(1) Case 3]{DKN07}, appealing to (\ref{eq2017-08-23-4}), Remark \ref{remark2017-11-20-1}(b) and Lemma \ref{lemma2017-11-20-5}(a).

The proof of lower bounds on hitting probabilities in (b) and (c) follows similarly by using Remark \ref{remark2017-11-20-1}(b) and Lemma \ref{lemma2017-11-20-5}(b).
\end{proof}

\subsection{Proof of Theorem \ref{theorem2017-08-18-2}}

In the case $b \equiv 1$ and $\sigma \equiv I_d$, the components of $v = (v_1, \ldots, v_d)$ are independent and identically distributed.
\begin{prop}\label{prop2017-08-21-1}
For any $0 < t_0 < T$, $p > 1$ and $K$ a compact set, there exists $c_1 = c_1(p, t_0, K) > 0$ such that for any $t_0 \leq s \leq t \leq T, x, y \in K,$
\begin{align}\label{eq2017-08-21-4}
\mbox{E}\left[|v_1(t, x) - v_1(s, y)|^p\right] \geq c_1\left(|t - s|^{\frac{\alpha - 1}{\alpha}} + |x - y|^{\alpha - 1}\right)^{p/2}.
\end{align}
\end{prop}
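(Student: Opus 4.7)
The key observation is that, with $\sigma \equiv I_d$ and $b \equiv 1$, each coordinate of $v$ is a Gaussian process and $v_1, \ldots, v_d$ are i.i.d. By Walsh's isometry,
\[
v_1(t,x) - v_1(s,y) \;=\; (t-s) + \int_0^T \!\!\int_\mathbb{R} g_\alpha(r, \zeta)\, W^1(dr, d\zeta),
\]
where $g_\alpha$ is the kernel from Lemma \ref{lemma2017-09-13-1}. So the increment is Gaussian with mean $t-s$ and variance $V := \|g_\alpha\|^2_{L^2([0,T] \times \mathbb{R})}$. For any Gaussian $\mathcal{N}(m, V)$, writing $Z = m + \sqrt{V}\, N_0$ with $N_0 \sim \mathcal{N}(0, 1)$ and applying the reverse triangle inequality in $L^p$, together with a case split according to $\sqrt{V}\,\|N_0\|_p \gtrless 2|m|$, gives $\mathrm{E}[|Z|^p]^{1/p} \ge (\|N_0\|_p / 2)\, \sqrt{V}$. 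Hence it suffices to prove the variance lower bound
\[
V \;\ge\; c\bigl((t-s)^{(\alpha-1)/\alpha} + |x-y|^{\alpha-1}\bigr).
\]

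For the temporal part, I restrict to $r \in (s, t)$ where only the first summand in $g_\alpha$ survives; the semigroup property $\int_\mathbb{R} G_\alpha(u, \cdot)^2 = G_\alpha(2u, 0)$ and the scaling $G_\alpha(u, 0) = u^{-1/\alpha} G_\alpha(1, 0)$ (see \cite[Lemma 4.1]{ChD15}) give
\[
V \;\ge\; \int_s^t G_\alpha(2(t-r), 0)\, dr \;=\; c_\alpha (t-s)^{(\alpha-1)/\alpha}.
\]
For the spatial part, I apply Plancherel in $\zeta$ on the region $r \in (0, s)$ and use the algebraic identity
\[
\bigl|A e^{-ix\lambda} - B e^{-iy\lambda}\bigr|^2 \;=\; (A - B)^2 + 4 AB\, \sin^2\!\bigl((x-y)\lambda/2\bigr),
\]
with $A = e^{-(t-r)|\lambda|^\alpha}$ and $B = e^{-(s-r)|\lambda|^\alpha}$. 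Dropping the nonnegative $(A-B)^2$ term, performing the change of variables $u = t + s - 2r$ in the outer integral, and invoking
\[
\frac{1}{\pi} \int_\mathbb{R} e^{-u|\lambda|^\alpha} \sin^2\!\bigl((x-y)\lambda/2\bigr)\, d\lambda \;=\; G_\alpha(u, 0) - G_\alpha(u, x-y)
\]
(which follows from $\sin^2 = (1 - \cos)/2$ and the Fourier definition of $G_\alpha$), I obtain, after the further substitution $v = |x-y|\, u^{-1/\alpha}$ and the scaling of $G_\alpha$,
\[
V \;\ge\; \int_{t-s}^{t+s} \bigl[G_\alpha(u, 0) - G_\alpha(u, x-y)\bigr] du \;=\; \alpha\, |x-y|^{\alpha - 1} \int_a^b v^{-\alpha} \bigl[G_\alpha(1, 0) - G_\alpha(1, v)\bigr] dv,
\]
with $a = |x-y|(t+s)^{-1/\alpha}$ and $b = |x-y|(t-s)^{-1/\alpha}$ (and $b = +\infty$ when $t = s$).

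The proof concludes with a case split. Set $C_K := \operatorname{diam}(K)(2 t_0)^{-1/\alpha}$, which bounds $a$ uniformly on the parameter domain. If $b \le 2 C_K$, then $t - s \ge (2 C_K)^{-\alpha} |x-y|^\alpha$ and the temporal bound above already gives $V \ge c' |x-y|^{\alpha-1}$; if instead $b > 2 C_K \ge 2 a$, then $(a, b) \supset (C_K, 2C_K)$, so the spatial integral is at least the uniform positive constant $\int_{C_K}^{2 C_K} v^{-\alpha}[G_\alpha(1, 0) - G_\alpha(1, v)]\, dv$. I expect the main technical obstacle to be precisely this uniformity: one must verify that $v \mapsto v^{-\alpha} [G_\alpha(1, 0) - G_\alpha(1, v)]$ is strictly positive on $(0, \infty)$ and integrable there (the $v^{2-\alpha}$ behavior near $v = 0$ is integrable since $\alpha \le 2 < 3$, while the $v^{-\alpha}$ decay at infinity is integrable since $\alpha > 1$), and the hypothesis $s \ge t_0 > 0$ must be used crucially to keep $a$ bounded above. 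Combining the two cases, $V \ge (c/2)\bigl((t-s)^{(\alpha-1)/\alpha} + |x-y|^{\alpha-1}\bigr)$, which yields the proposition.
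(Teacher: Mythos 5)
Your proof is correct and reaches the same conclusion as the paper's, via the same overall structure: reduce to the variance $V = I_1 + I_2$ (with $I_1$ the contribution from $r\in(s,t)$ and $I_2$ from $r\in(0,s)$), establish $I_1 = c_\alpha(t-s)^{(\alpha-1)/\alpha}$ via the semigroup and scaling identities, and apply Plancherel to $I_2$. The spatial estimate is, however, handled by a genuinely different argument, and your preliminary reduction is slightly more careful than the paper's.

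On the spatial estimate: where the paper factors out $e^{-(s-r)|\lambda|^\alpha}$ and invokes the inequality $|1-re^{i\theta}|\ge\tfrac12|1-e^{i\theta}|$ for $r\in[0,1]$, then integrates in $r$ to produce the explicit kernel $\frac{1-e^{-2s|\lambda|^\alpha}}{8\pi|\lambda|^\alpha}|1-e^{i\lambda(x-y)}|^2$ and finishes with the substitution $\xi=|x-y|\lambda$ together with $s\ge t_0$ and $|x-y|\le\mathrm{diam}(K)$ --- you instead use the exact decomposition $|Ae^{-ix\lambda}-Be^{-iy\lambda}|^2=(A-B)^2+4AB\sin^2\!\bigl(\tfrac{(x-y)\lambda}{2}\bigr)$, discard $(A-B)^2$, and recognize the surviving integral as $\int_{t-s}^{t+s}[G_\alpha(u,0)-G_\alpha(u,x-y)]\,du$, which you then scale to $\alpha|x-y|^{\alpha-1}\int_a^b v^{-\alpha}[G_\alpha(1,0)-G_\alpha(1,v)]\,dv$. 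Both routes are sound. Your formulation is arguably more transparent (it expresses the spatial variance directly as an integral of Green-kernel decrements), but it buys this transparency at the cost of requiring strict unimodality of the symmetric $\alpha$-stable density, i.e. $G_\alpha(1,v)<G_\alpha(1,0)$ for $v\ne 0$ --- the paper only ever needs the weak inequality (it is cited as \cite[Lemma 4.1(ii)]{ChD15} elsewhere), and strict decrease of the stable density in $|v|$ is standard but deserves an explicit citation. You also need a two-regime case split on $b=|x-y|(t-s)^{-1/\alpha}$ at the end because your lower bound degenerates when $b$ is small, whereas the paper's bound $I_2\gtrsim |x-y|^{\alpha-1}$ holds unconditionally once $s\ge t_0$; both case splits are ultimately equivalent (yours, $b\gtrless 2C_K$, is $t-s\gtrless c|x-y|^\alpha$ in disguise). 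Finally, the paper dispenses with the reduction to $p=2$ with the single phrase ``Since $v$ is Gaussian,'' which is immediate only when $b\equiv 0$; your explicit $L^p$ argument (reverse triangle inequality plus the split $\sqrt{V}\|N_0\|_p\gtrless 2|m|$) correctly handles the nonzero-mean case and is a small improvement in rigor.
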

\begin{proof}
The proof is similar to that of Proposition 2.1 of \cite{DKN13}. Since $v$ is Gaussian, it is sufficient to prove (\ref{eq2017-08-21-4}) for $p = 2$. By Ito's isometry, we have
\begin{align}\label{eq2017-08-21-5}
\mbox{E}\left[|v_1(t, x) - v_1(s, y)|^2\right] &= \int_s^t\int_{\mathbb{R}}G_{\alpha}^2(t - r, x - v)dvdr \nonumber \\
&  \quad + \int_0^s\int_{\mathbb{R}}(G_{\alpha}(t - r, x - v) - G_{\alpha}(s - r, y - v))^2dvdr \\
&:= I_1 + I_2 \nonumber.
\end{align}

\textbf{Case 1}: $t - s \geq |x - y|^{\alpha}$. In this case, by the semi-group property and the scaling property of the Green kernel \cite[Lemma 4.1(iii), (iv)]{ChD15}, we have
\begin{align*}
I_1 + I_2 \geq I_1 &= \int_s^tG_{\alpha}(2(t - r), 0)dr = \int_s^t(2(t - r))^{-1/\alpha}G_{\alpha}(1, 0)dr \\
&= c_{\alpha}(t - s)^{\frac{\alpha - 1}{\alpha}} \geq \frac{c_{\alpha}}{2}\left((t - s)^{\frac{\alpha - 1}{\alpha}} + |x - y|^{\alpha - 1}\right).
\end{align*}
\textbf{Case 2}: $t - s \leq |x - y|^{\alpha}$. In this case, by the Plancherel theorem,
\begin{align*}
I_1 + I_2 \geq I_2 &= \int_0^s\int_{\mathbb{R}}(G_{\alpha}(t - r, x -y + v) - G_{\alpha}(s - r, v))^2dvdr \\
&= \frac{1}{2\pi}\int_0^s\int_{\mathbb{R}}\left|e^{-(s - r)|\lambda|^{\alpha}} - e^{-(t - r)|\lambda|^{\alpha}}e^{i\lambda(x - y)}\right|^2d\lambda dr\\
&= \frac{1}{2\pi}\int_0^s\int_{\mathbb{R}}e^{-2(s - r)|\lambda|^{\alpha}}\left|1 - e^{-(t - s)|\lambda|^{\alpha}}e^{i\lambda(x - y)}\right|^2d\lambda dr.
\end{align*}
We use the elementary inequality $|1 - re^{i\theta}| \geq \frac{1}{2}|1 - e^{i\theta}|$, valid for all $r \in [0, 1]$ and $\theta \in \mathbb{R}$, to see that
\begin{align*}
I_2 \geq \int_{\mathbb{R}}\frac{1 - e^{-2s|\lambda|^{\alpha}}}{8\pi |\lambda|^{\alpha}}\left|1 - e^{i\lambda(x - y)}\right|^2d\lambda.
\end{align*}
Because $x - y \in K - K$ and $K$ is compact, fix $C > 0$ such that $|x - y| \leq C$. When $x \neq y$, we change the variable by letting $\xi = |x - y|\lambda$
 and write $e_0 = (x - y)/|x - y|$ to see that the right-hand side of the above inequality is equal to
 \begin{align*}
&|x - y|^{\alpha - 1}\int_{\mathbb{R}}\frac{1 - e^{-2s|\xi|^{\alpha}/|x - y|^{\alpha}}}{8\pi |\xi|^{\alpha}}\left|1 - e^{ie_0\xi}\right|^2d\xi \\
 &\quad \geq |x - y|^{\alpha - 1}\int_{\mathbb{R}}\frac{1 - e^{-2s|\xi|^{\alpha}/C^{\alpha}}}{8\pi |\xi|^{\alpha}}\left|1 - e^{ie_0\xi}\right|^2d\xi\\
& \quad \geq  |x - y|^{\alpha - 1}\int_{\mathbb{R}}\frac{1 - e^{-2t_0|\xi|^{\alpha}/C^{\alpha}}}{8\pi |\xi|^{\alpha}}\left|1 - e^{ie_0\xi}\right|^2d\xi.
\end{align*}
The integral above is a positive constant. Therefore, when $t - s \leq |x - y|^{\alpha}$,
\begin{align*}
\mbox{E}\left[|v_1(t, x) - v_1(s, y)|^2\right] \geq c|x - y|^{\alpha - 1} \geq \frac{c}{2}\left(|t - s|^{\frac{\alpha - 1}{\alpha}} + |x - y|^{\alpha - 1}\right).
\end{align*}
Cases $1$ and $2$ together imply (\ref{eq2017-08-21-4}).
\end{proof}

\begin{proof}[Proof of Theorem \ref{theorem2017-08-18-2}]
As in \cite[Theorem 1.5]{DKN13}, we first apply \cite[Theorem 7.6]{Xia09} to deduce Theorem \ref{theorem2017-08-18-2}(a). For this, it suffices to verify Conditions (C1) and (C2) of \cite[Section 2.4, p.158]{Xia09} with $N = 2, H_1 = \frac{\alpha - 1}{2\alpha}, H_2 = \frac{\alpha - 1}{2}$.

First, we observe that $\mbox{E}[v_1(t, x)^2] = c_{\alpha}t^{\frac{\alpha - 1}{\alpha}}$ (see (\ref{eq2017-09-05-1})), which implies that there are positive constants $c_1, c_2$ such that for all $(t, x), (s, y) \in I \times J$,
\begin{align}\label{eq2017-08-23-1}
c_1 \leq \mbox{E}[v_1(t, x)^2] \leq c_2.
\end{align}
By (\ref{eq2017-08-21-4}) and (\ref{eq2017-08-18-1}), there exist positive constants $c_3, c_4$ such that for all $(t, x), (s, y) \in I \times J$,
\begin{align}\label{eq2017-08-23-2}
c_3(\Delta_{\alpha}((t, x); (s, y)))^{2} \leq \mbox{E}\left[|v_1(t, x) - v_1(s, y)|^2\right] \leq c_4(\Delta_{\alpha}((t, x); (s, y)))^{2}.
\end{align}
Hence condition (C1) is satisfied by (\ref{eq2017-08-23-1}) and (\ref{eq2017-08-23-2}).  Condition (C2) holds by applying the fourth point of Remark 2.2 in \cite{Xia09}, since $(t, x) \mapsto \mbox{E}[v_1(t, x)] = c_{\alpha}t^{\frac{\alpha - 1}{\alpha}}$ is continuous in $I \times J$ with continuous partial derivatives.

The proof of Theorem \ref{theorem2017-08-18-2}(b) and (c) follows the same lines by using \eqref{eq2017-08-23-1}, \eqref{eq2017-08-23-2} and the fact that $(t, x) \mapsto \mbox{E}[v_1(t, x)] = c_{\alpha}t^{\frac{\alpha - 1}{\alpha}}$ is continuous in $I \times J$ with continuous partial derivatives.

Therefore we have finished the proof of Theorem \ref{theorem2017-08-18-2}.
\end{proof}

\begin{appendices}

\section{}

We first recall Burkholder's inequality for Hilbert-space-valued martingales; see also \cite[Eq.(4.18)]{BaP98} and \cite[Lemma 7.6]{DKN09}.
\begin{lemma}[{{\cite[E.2. p.212]{MeM82}}}] \label{lemma2017-09-05-5}
Let $H_{s, y}$ be a predictable $L^2(([0, t] \times \mathbb{R})^m, d\alpha)$-valued process, where $m \geq 1$ and $d\alpha$ denotes Lebesgue measure. Then, for any $p \geq 2$, there exists $C > 0$ such that
\begin{eqnarray*}
\mbox{E}\Bigg[\bigg|\int\limits_{([0, t] \times \mathbb{R})^m}\Big|\int_0^t\int_{\mathbb{R}}H_{s, y}(\alpha)W(ds, dy)\Big|^2d\alpha\bigg|^p\Bigg] \leq C\mbox{E}\Bigg[\bigg|\int_0^t\int_{\mathbb{R}}\Big|\int\limits_{([0, t] \times \mathbb{R})^m}H^2_{s, y}(\alpha)d\alpha\Big|dyds\bigg|^p\Bigg].
\end{eqnarray*}
\end{lemma}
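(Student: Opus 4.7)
The plan is to interpret the stochastic integral as taking values in the separable Hilbert space $\mathcal{K} := L^2(([0, t] \times \mathbb{R})^m, d\alpha)$ and then invoke the Burkholder-Davis-Gundy inequality for $\mathcal{K}$-valued continuous square-integrable martingales, as formulated in Métivier-Pellaumail \cite{MeM82}. Concretely, I would view $(s, y) \mapsto H_{s, y}(\cdot)$ as a predictable $\mathcal{K}$-valued process and define, for $0 \leq r \leq t$,
\begin{align*}
N_r(\alpha) := \int_0^r \int_{\mathbb{R}} H_{s, y}(\alpha)\, W(ds, dy).
\end{align*}
By the hypothesis on $H$, the process $(N_r)_{0 \leq r \leq t}$ is a continuous $\mathcal{K}$-valued square-integrable martingale with tensorial quadratic variation
\begin{align*}
\langle N, N \rangle_r = \int_0^r \int_{\mathbb{R}} \|H_{s, y}\|_{\mathcal{K}}^2\, dy\, ds = \int_0^r \int_{\mathbb{R}} \Big(\int_{([0, t] \times \mathbb{R})^m} H_{s, y}^2(\alpha)\, d\alpha\Big) dy\, ds.
\end{align*}

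Second, the Hilbert-space-valued Burkholder-Davis-Gundy inequality (Métivier-Pellaumail, \cite[E.2, p.212]{MeM82}) then gives, for every $p \geq 2$, a constant $C = C(p)$ such that
\begin{align*}
\mathrm{E}\big[\|N_t\|_{\mathcal{K}}^{2p}\big] \leq C\, \mathrm{E}\big[\langle N, N \rangle_t^{p}\big].
\end{align*}
Since $\|N_t\|_{\mathcal{K}}^2 = \int_{([0, t] \times \mathbb{R})^m} \big|\int_0^t \int_{\mathbb{R}} H_{s, y}(\alpha)W(ds, dy)\big|^2\, d\alpha$, unwinding the norms yields exactly the stated estimate.

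The only mild obstacle is checking that the $\mathcal{K}$-valued stochastic integral is well defined and that the process $N$ is a genuine $\mathcal{K}$-valued martingale with the claimed quadratic variation. The standard route is to first assume $H$ is a simple predictable step process, in which case $N_r$ is a finite Itô sum, the martingale property is immediate, and the identity for $\langle N, N\rangle_r$ follows by direct computation using the independence of the white noise increments and Fubini's theorem. The general case then follows by a density argument in $L^2(\Omega \times [0, t] \times \mathbb{R}; \mathcal{K})$, using the isometry $\mathrm{E}[\|N_t\|_{\mathcal{K}}^2] = \mathrm{E}[\langle N, N\rangle_t]$ to pass to the limit. Since the statement is quoted directly from \cite{MeM82}, no ideas beyond these routine verifications are required.
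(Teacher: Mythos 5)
Your proposal is correct and matches exactly what the paper intends: the lemma is stated without proof as a direct quotation of the Hilbert-space-valued Burkholder--Davis--Gundy inequality from M\'etivier's book, and your argument (view the stochastic integral as a $\mathcal{K}$-valued continuous martingale with $\mathcal{K} = L^2(([0,t]\times\mathbb{R})^m, d\alpha)$, identify its scalar quadratic variation as $\int_0^r\int_{\mathbb{R}}\|H_{s,y}\|_{\mathcal{K}}^2\,dy\,ds$, and apply the cited BDG estimate) is precisely the reading of that citation. The only verifications you flag — that the $\mathcal{K}$-valued integral is well defined, is a genuine martingale, and agrees $d\alpha$-a.e.\ with the pointwise-in-$\alpha$ real-valued integrals — are indeed the routine ones, handled by the simple-process-and-density argument you describe.
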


The next result is an extension of Morien \cite[Lemma 4.2]{Mor99} for the solution of the fractional stochastic heat equation \eqref{eq2017-11-17-1}.
\begin{lemma}\label{lemma2017-09-06-1}
Assume \textbf{P1}. For all $p \geq 1, T > 0$ there exists $C > 0$ such that for all $T \geq t \geq s \geq \epsilon > 0$ and $x \in \mathbb{R}$,
\begin{equation*}
\sum_{k, i = 1}^d \mbox{E}\left[\left(\int_{s - \epsilon}^s dr \int_{\mathbb{R}}dv \left|D_{r, v}^{(k)}(u_i(t, x))\right|^2\right)^p\right] \leq C \epsilon^{(\alpha - 1)p/\alpha}.
\end{equation*}
\end{lemma}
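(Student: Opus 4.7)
The plan is to use the SPDE satisfied by the Malliavin derivative \eqref{eq2017-09-05-3} and set up a Volterra-type integral inequality with an integrable singular kernel, then close it by a Gronwall lemma for singular kernels. Write $D_{r,v}^{(k)}(u_i(t,x))$ as the sum of the free term $G_{\alpha}(t-r,x-v)\sigma_{ik}(u(r,v))$ and the remainder $a_i(k,r,v,t,x)$ from \eqref{eq2017-09-05-4001122}. Using $(a+b)^2 \leq 2a^2+2b^2$ twice, decompose
\begin{align*}
\int_{s-\epsilon}^s \!\!dr\int_{\mathbb{R}} dv\,|D_{r,v}^{(k)}u_i(t,x)|^2 \leq 2 Q_0 + 4 Q_1 + 4 Q_2,
\end{align*}
where $Q_0$ collects the free terms and $Q_1, Q_2$ are respectively the stochastic and drift pieces of $a_i$. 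Under hypothesis \textbf{P1}, $\sigma$ is bounded, so combining with \eqref{eq2017-09-05-1} and the subadditivity of $x\mapsto x^{(\alpha-1)/\alpha}$ gives the deterministic bound
\begin{align*}
Q_0 \leq C\bigl[(t-s+\epsilon)^{(\alpha-1)/\alpha}-(t-s)^{(\alpha-1)/\alpha}\bigr] \leq C\epsilon^{(\alpha-1)/\alpha}.
\end{align*}

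For the remainder, define
\begin{align*}
\psi(t) := \sup_{x\in \mathbb{R}}\sum_{k,l=1}^d \mathrm{E}\!\left[\left(\int_{s-\epsilon}^{s\wedge t}\!\!dr\int_{\mathbb{R}}dv\,|D_{r,v}^{(k)}u_l(t,x)|^2\right)^{\!p}\right],
\end{align*}
which vanishes for $t\leq s-\epsilon$ and equals the left-hand side of the lemma when $t\geq s$. For $Q_1$, apply Lemma \ref{lemma2017-09-05-5} to the $L^2([s-\epsilon,s]\times\mathbb{R})$-valued process $H_{\theta,\eta}(r,v):=G_{\alpha}(t-\theta,x-\eta)\mathbf{1}_{\{s-\epsilon\leq r<\theta\leq t\}}D_{r,v}^{(k)}(\sigma_{ij}(u(\theta,\eta)))$; the chain rule together with the boundedness of $\partial_l\sigma_{ij}$ under \textbf{P1}, Hölder's inequality with respect to the measure $G_{\alpha}^2(t-\theta,x-\eta)d\theta d\eta$, and the identity $\int_{\mathbb{R}}G_{\alpha}^2(t-\theta,x-\eta)d\eta=c_\alpha(t-\theta)^{-1/\alpha}$ yield
\begin{align*}
\mathrm{E}[Q_1^p] \leq C_1 \int_{s-\epsilon}^t (t-\theta)^{-1/\alpha}\psi(\theta)d\theta.
\end{align*}
For $Q_2$, apply Cauchy--Schwarz with respect to $G_{\alpha}(t-\theta,x-\eta)d\theta d\eta$ (whose total mass is $t-r\leq T$), bound $\partial_l b_i$ via \textbf{P1}, and then use Hölder's inequality; since $\int_{\mathbb{R}}G_{\alpha}(t-\theta,x-\eta)d\eta=1$, this gives
\begin{align*}
\mathrm{E}[Q_2^p] \leq C_2 \int_{s-\epsilon}^t \psi(\theta)d\theta.
\end{align*}

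Taking $p$-th moments, summing over $k,i$, and taking $\sup_{x\in\mathbb{R}}$ produces the Volterra-type inequality
\begin{align*}
\psi(t) \leq C\epsilon^{(\alpha-1)p/\alpha} + C\int_{s-\epsilon}^t\bigl[1+(t-\theta)^{-1/\alpha}\bigr]\psi(\theta)d\theta, \quad t\in[s-\epsilon,T].
\end{align*}
Because $1/\alpha<1$, the kernel is integrable, and a generalized Gronwall lemma for singular Volterra kernels (of Henry type) closes the inequality and delivers $\psi(t)\leq C'\epsilon^{(\alpha-1)p/\alpha}$ for all $t\in[s,T]$, which is the claim.

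The main obstacle is retaining uniformity in $x$ while passing through Burkholder and Hölder; this is possible precisely because after bounding $D^{(k)}\sigma_{ij}$ and $D^{(k)}b_i$ pointwise by $D^{(k)}u_l$ via \textbf{P1}, the only remaining $x$-dependence is carried by the Green kernels, which integrate in $\eta$ to an $x$-independent function of $t-\theta$, so the supremum over $x$ can be absorbed into $\psi$. A secondary point is that the requirement $s\geq\epsilon$ in the hypothesis guarantees the lower limit $s-\epsilon\geq 0$, so no boundary adjustment is needed.
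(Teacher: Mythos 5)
Your proof is correct and follows essentially the same Gronwall-iteration route as the paper's: split the Malliavin derivative via the SPDE \eqref{eq2017-09-05-3} into free, stochastic, and drift pieces, control the free piece through \eqref{eq2017-09-05-1}, push the stochastic piece through Burkholder (Lemma \ref{lemma2017-09-05-5}) and Hölder to produce the singular kernel $(t-\theta)^{-1/\alpha}$, treat the drift piece analogously, and close with a singular Gronwall. The only cosmetic difference is bookkeeping: the paper splits the $\theta$-integral at $s$ and kills the $[s-\epsilon,s]$ block directly via the a priori bound \eqref{eq2017-09-05-4} (its $A_{21},A_{31}$), then runs Gronwall from $s$; you instead fold the whole range $[s-\epsilon,t]$ into the definition of $\psi$ via the truncation $s\wedge t$ and run Gronwall from $s-\epsilon$, which is equally valid.
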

\begin{proof}
The proof follows the same lines as \cite[Lemma 4.2]{Mor99}. We include it because the ingredients will be needed for Lemma \ref{lemma2017-09-14-1}. We define
\begin{equation}\label{eq2017-09-06-4}
H_i(t, x):= \mbox{E}\left[\left(\int_{s - \epsilon}^sdr\int_{\mathbb{R}}dv\left|D_{r, v}^{(k)}(u_i(t, x))\right|^2\right)^p\right],
\end{equation}
and
\begin{equation}\label{eq2017-09-06-6}
K_s(t) := \sum_{i = 1}^d \sup_{s \leq \lambda \leq t}\sup_{y \in \mathbb{R}}H_i(\lambda, y)
\end{equation}
which are finite by (\ref{eq2017-09-05-4}).
Thanks to formula (\ref{eq2017-09-05-3}), we have
\begin{align} \label{eq2017-09-06-7}
H_i(t, x) &\leq  c\left(\int_{s - \epsilon}^sdr\int_{\mathbb{R}}dv \, G_{\alpha}^2(t - r, x - v)\right)^p   \nonumber\\
&  \quad + c\sum_{j = 1}^d \mbox{E}\bigg[\Big[\int_{s - \epsilon}^sdr\int_{\mathbb{R}}dv\Big(\int_r^t\int_{\mathbb{R}}G_{\alpha}(t - \theta, x - \eta) D_{r, v}^{(k)}(\sigma_{ij}(u(\theta, \eta)))W^j(d\theta, d\eta)\Big)^2\Big]^p\bigg] \nonumber\\
& \quad + c \, \mbox{E}\bigg[\Big[\int_{s - \epsilon}^sdr\int_{\mathbb{R}}dv\Big(\int_r^t\int_{\mathbb{R}}G_{\alpha}(t - \theta, x - \eta)D_{r, v}^{(k)}(b_i(u(\theta, \eta)))d\theta d\eta\Big)^2\Big]^p\bigg] \nonumber\\
&:= A_1 + A_2 + A_3.
\end{align}
By \eqref{eq2017-09-05-1}, we see that
\begin{align}\label{eq2017-09-06-5}
\int_{s - \epsilon}^sdr\int_{\mathbb{R}}dv \, G_{\alpha}^2(t - r, x - v)  &= c((t - s + \epsilon)^{\frac{\alpha - 1}{\alpha}} - (t - s )^{\frac{\alpha - 1}{\alpha}}) \leq  c'\epsilon^{\frac{\alpha - 1}{\alpha}},
\end{align}
since the function $x \mapsto (x + \epsilon)^{(\alpha - 1)/\alpha} - x^{(\alpha - 1)/\alpha}$ is decreasing on $[0, \infty[$. This implies that
\begin{align}\label{eq2017-09-06-12}
A_1 \leq c_p \epsilon^{(\alpha - 1)p/\alpha}.
\end{align}
Using Burkholder's inequality for Hilbert-space-valued martingales (Lemma \ref{lemma2017-09-05-5}) first, and then the Cauchy-Schwarz inequality together with the fact that the partial derivatives of $\sigma_{ij}$ are bounded, we obtain
\begin{align} \label{eq2017-09-06-13}
A_2 &\leq c\sum_{l = 1}^d \mbox{E}\bigg[\Big[\int_{s - \epsilon}^sd\theta\int_{\mathbb{R}}d\eta\int_{s - \epsilon}^{s \wedge \theta}dr\int_{\mathbb{R}}dv \, G_{\alpha}^2(t - \theta, x - \eta)\left(D_{r, v}^{(k)}(u_l(\theta, \eta))\right)^2\Big]^p\bigg]\nonumber\\
& \quad + c\sum_{l = 1}^d \mbox{E}\bigg[\Big[\int_{s}^td\theta\int_{\mathbb{R}}d\eta\int_{s - \epsilon}^{s \wedge \theta}dr\int_{\mathbb{R}}dv\, G_{\alpha}^2(t - \theta, x - \eta)\left(D_{r, v}^{(k)}(u_l(\theta, \eta))\right)^2\Big]^p\bigg]\nonumber\\
&:= A_{21} + A_{22}.
\end{align}
We now use H\"{o}lder's inequality with respect to the measure $G_{\alpha}^2(t - \theta, x - \eta)d\theta d\eta$ to find that
\begin{align}\label{eq2017-09-06-14}
A_{21} &\leq c\left|\int_{s - \epsilon}^sd\theta\int_{\mathbb{R}}d\eta \, G_{\alpha}^2(t - \theta, x - \eta)\right|^{p}\sup_{(\theta, \eta)\in [0, T]\times \mathbb{R}} \mbox{E}\left[\left(\int_{0}^{T}dr\int_{\mathbb{R}}dv\left(D_{r, v}^{(k)}(u_l(\theta, \eta))\right)^2\right)^p\right]\nonumber\\
&\leq c \, \epsilon^{(\alpha - 1)q/\alpha},
\end{align}
where the last inequality follows from (\ref{eq2017-09-06-5}) and (\ref{eq2017-09-05-4}). Again, applying H\"{o}lder's inequality with respect to the measure $G_{\alpha}^2(t - \theta, x - \eta)d\theta d\eta$, we see that
\begin{align}\label{eq2017-09-06-15}
A_{22} &\leq c\left|\int_{s}^td\theta\int_{\mathbb{R}}d\eta \, G_{\alpha}^2(t - \theta, x - \eta)\right|^{p - 1}  \nonumber\\
& \quad \times\int_{s}^td\theta\int_{\mathbb{R}}d\eta \, G_{\alpha}^2(t - \theta, x - \eta)\sum_{l = 1}^d \mbox{E}\left[\left(\int_{s - \epsilon}^{s}dr\int_{\mathbb{R}}dv\left(D_{r, v}^{(k)}(u_l(\theta, \eta))\right)^2\right)^p\right]\nonumber\\
&\leq c(t - s)^{\frac{\alpha - 1}{\alpha}(p - 1)}\int_{s}^td\theta\int_{\mathbb{R}}d\eta \, G_{\alpha}^2(t - \theta, x - \eta)K_s(\theta)\nonumber\\
&\leq c\int_{s}^t(t - \theta)^{-\frac{1}{\alpha}}K_s(\theta)d\theta.
\end{align}

We handle the third term in (\ref{eq2017-09-06-7}) in a similar way. First, by the Cauchy-Schwarz inequality  with respect to the measure $G_{\alpha}(t - \theta, x - \eta)d\theta d\eta$, we have
\begin{align}\label{eq2017-09-06-8}
 A_3 &\leq c\, \mbox{E}\left[\left[\int_{s - \epsilon}^sdr\int_{\mathbb{R}}dv\int_r^t\int_{\mathbb{R}} \, G_{\alpha}(t - \theta, x - \eta)\sum_{l = 1}^d\left(D_{r, v}^{(k)}(u_l(\theta, \eta))\right)^2d\theta d\eta\right]^p\right] \nonumber\\
&= c\, \mbox{E}\left[\left[\int_{s - \epsilon}^td\theta\int_{\mathbb{R}}d\eta\int_{s - \epsilon}^{s \wedge \theta}dr\int_{\mathbb{R}}dv \, G_{\alpha}(t - \theta, x - \eta)\sum_{l = 1}^d\left(D_{r, v}^{(k)}(u_l(\theta, \eta))\right)^2\right]^p\right] \nonumber\\
&\leq c\, \mbox{E}\left[\left[\int_{s - \epsilon}^sd\theta\int_{\mathbb{R}}d\eta \, G_{\alpha}(t - \theta, x - \eta)\sum_{l = 1}^d\int_{s - \epsilon}^{s \wedge \theta}dr\int_{\mathbb{R}}dv \left(D_{r, v}^{(k)}(u_l(\theta, \eta))\right)^2\right]^p\right] \nonumber\\
& \quad + c\, \mbox{E}\left[\left[\int_{s}^td\theta\int_{\mathbb{R}}d\eta \, G_{\alpha}(t - \theta, x - \eta)\sum_{l = 1}^d\int_{s - \epsilon}^{s}dr\int_{\mathbb{R}}dv \left(D_{r, v}^{(k)}(u_l(\theta, \eta))\right)^2\right]^p\right] \nonumber\\
&:= A_{31} + A_{32}.
\end{align}
By H\"{o}lder's inequality with respect to the measure $G_{\alpha}(t - \theta, x - \eta)d\theta d\eta$,
\begin{align}\label{eq2017-09-06-10}
A_{31} &\leq c\left|\int_{s - \epsilon}^sd\theta\int_{\mathbb{R}}d\eta \, G_{\alpha}(t - \theta, x - \eta)\right|^{p - 1} \nonumber \\
& \quad  \times \int_{s - \epsilon}^sd\theta\int_{\mathbb{R}}d\eta \, G_{\alpha}(t - \theta, x - \eta)\sum_{l = 1}^d \mbox{E}\left[\left(\int_{s - \epsilon}^{s \wedge \theta}dr\int_{\mathbb{R}}dv\left(D_{r, v}^{(k)}(u_l(\theta, \eta))\right)^2\right)^p\right]\nonumber\\
&\leq c\left|\int_{s - \epsilon}^sd\theta\int_{\mathbb{R}}d\eta \, G_{\alpha}(t - \theta, x - \eta)\right|^{p}\sum_{l = 1}^d\sup_{(\theta, \eta) \in [0, T] \times \mathbb{R}} \mbox{E}\left[\left(\int_{0}^{T}dr\int_{\mathbb{R}}dv\left(D_{r, v}^{(k)}(u_l(\theta, \eta))\right)^2\right)^p\right]\nonumber\\
&\leq c\, \epsilon^p \leq c\, \epsilon^{(\alpha - 1)p/\alpha},
\end{align}
where in the third inequality we use \cite[Lemma 4.1(i)]{ChD15} and (\ref{eq2017-09-05-4}). Similarly,
\begin{align}\label{eq2017-09-06-11}
A_{32} &\leq c\left|\int_{s}^td\theta\int_{\mathbb{R}}d\eta \, G_{\alpha}(t - \theta, x - \eta)\right|^{p - 1}  \nonumber\\
&  \quad \times\int_{s}^td\theta\int_{\mathbb{R}}d\eta \, G_{\alpha}(t - \theta, x - \eta)\sum_{l = 1}^d E\left[\left(\int_{s - \epsilon}^{s}dr\int_{\mathbb{R}}dv\left(D_{r, v}^{(k)}(u_l(\theta, \eta))\right)^2\right)^p\right]\nonumber\\
&\leq c\left|\int_{s}^td\theta\int_{\mathbb{R}}d\eta \, G_{\alpha}(t - \theta, x - \eta)\right|^{p - 1}\int_{s}^td\theta\int_{\mathbb{R}}d\eta \, G_{\alpha}(t - \theta, x - \eta)K_s(\theta)\nonumber\\
&\leq c\int_{s}^tK_s(\theta)d\theta.
\end{align}

Finally, we put (\ref{eq2017-09-06-7}) and (\ref{eq2017-09-06-12})--(\ref{eq2017-09-06-11}) together and obtain that
\begin{align*}
K_s(t) &\leq c\, \epsilon^{(\alpha - 1)p/\alpha} + c\int_s^t(1 + (t - \theta)^{-\frac{1}{\alpha}})K_s(\theta)d\theta \\
&\leq c\, \epsilon^{(\alpha - 1)p/\alpha} + \overline{c}\int_s^t(t - \theta)^{-\frac{1}{\alpha}}K_s(\theta)d\theta.
\end{align*}
Define $\overline{K}_s(\lambda) := K_s(\lambda + s)$. From the above inequality we have
\begin{align*}
\overline{K}_s(t - s) &\leq c\, \epsilon^{(\alpha - 1)p/\alpha} + \overline{c}\int_0^{t - s}(t - s - \theta)^{-\frac{1}{\alpha}}\overline{K}_s(\theta)d\theta.
\end{align*}
By Gronwall's lemma \cite[Lemma 15]{Dal99}, we have
\begin{equation*}
K_s(t) = \overline{K}_s(t - s) \leq c\, \epsilon^{(\alpha - 1)p/\alpha}, \quad  \mbox{for all} \, \, \, s \leq t.  \qedhere
\end{equation*}
\end{proof}

The following lemma is an improvement of Lemma \ref{lemma2017-09-06-1}.

\begin{lemma} \label{lemma2017-09-14-1}
Fix $T > 0, c_0 > 1$ and $0 < \gamma_0 < 1$. For all $p \geq 1$ there exists $C > 0$ such that for all $T \geq t \geq s \geq \epsilon > 0$ with $t - s > c_0\epsilon^{\gamma_0}$ and $x \in \mathbb{R}$,
\begin{align*}
\sum_{k, i = 1}^d E\left[\left(\int_{s - \epsilon}^sdr\int_{\mathbb{R}}dv\left(D_{r, v}^{(k)}(u_i(t, x))\right)^2\right)^p\right] \leq C\epsilon^{(1 - \frac{\gamma_0}{\alpha})p}.
\end{align*}
\end{lemma}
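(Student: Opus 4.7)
I would follow the structure of the proof of Lemma \ref{lemma2017-09-06-1} and only modify the estimate of the leading deterministic term. Define $H_i(t,x)$ and $K_s(t)$ exactly as in \eqref{eq2017-09-06-4}--\eqref{eq2017-09-06-6}. Plugging in the Malliavin formula \eqref{eq2017-09-05-3} and using Burkholder (Lemma \ref{lemma2017-09-05-5}), Cauchy--Schwarz, and H\"older inequalities in the same way as in \eqref{eq2017-09-06-7}--\eqref{eq2017-09-06-11}, one reaches the decomposition $H_i(t,x) \leq A_1 + A_{21} + A_{22} + A_{31} + A_{32}$ of Lemma \ref{lemma2017-09-06-1}.

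The crucial new input concerns $A_1$. By \eqref{eq2017-09-05-1},
\[
A_1 \leq c\,\Bigl[(t-s+\epsilon)^{(\alpha-1)/\alpha} - (t-s)^{(\alpha-1)/\alpha}\Bigr]^{p}.
\]
The mean value theorem applied to the concave function $x \mapsto x^{(\alpha-1)/\alpha}$ gives
\[
(t-s+\epsilon)^{(\alpha-1)/\alpha} - (t-s)^{(\alpha-1)/\alpha}
\;\leq\;\tfrac{\alpha-1}{\alpha}\,(t-s)^{-1/\alpha}\,\epsilon
\;\leq\; \tfrac{\alpha-1}{\alpha}\,c_0^{-1/\alpha}\,\epsilon^{1-\gamma_0/\alpha},
\]
where in the last step one invokes the standing hypothesis $t - s > c_0 \epsilon^{\gamma_0}$. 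Thus $A_1 \leq c\,\epsilon^{(1-\gamma_0/\alpha)p}$, which replaces the coarser $c\,\epsilon^{(\alpha-1)p/\alpha}$ used in Lemma \ref{lemma2017-09-06-1}. The identical improvement propagates to $A_{21}$ and $A_{31}$, whose bounds share the factor $|\int_{s-\epsilon}^{s}\!\int G_{\alpha}^{2}|^{p}$ (or its analog with one $G_\alpha$).

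The terms $A_{22}$ and $A_{32}$ yield, as in Lemma \ref{lemma2017-09-06-1}, a Volterra inequality of the form
\[
K_s(t) \;\leq\; c\,\epsilon^{(1-\gamma_0/\alpha)p}
\;+\; \overline{c}\int_{s}^{t}(t-\theta)^{-1/\alpha}\,K_s(\theta)\,d\theta,
\qquad t-s>c_0\epsilon^{\gamma_0}.
\]
Setting $\overline{K}_s(\lambda) := K_s(\lambda+s)$ and applying Dalang's fractional Gronwall lemma \cite[Lemma 15]{Dal99} (exactly as at the end of the proof of Lemma \ref{lemma2017-09-06-1}) then delivers $K_s(t) \leq C\,\epsilon^{(1-\gamma_0/\alpha)p}$, which is the desired estimate.

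\textbf{Main obstacle.} The Gronwall step is delicate because $K_s(\theta)$ for $\theta - s \leq c_0\epsilon^{\gamma_0}$ is only controlled by the weaker Lemma \ref{lemma2017-09-06-1} bound $c\,\epsilon^{(\alpha-1)p/\alpha}$, not by the target $c\,\epsilon^{(1-\gamma_0/\alpha)p}$. I would handle this by splitting $\int_{s}^{t}(t-\theta)^{-1/\alpha}K_s(\theta)\,d\theta$ at $\theta = s + c_0 \epsilon^{\gamma_0}$; the boundary piece contributes at most
\[
c\,\epsilon^{(\alpha-1)p/\alpha}\int_{s}^{s+c_0\epsilon^{\gamma_0}}(t-\theta)^{-1/\alpha}\,d\theta
\;\leq\; c\,\epsilon^{(\alpha-1)(p+\gamma_0)/\alpha},
\]
using the subadditivity of $x \mapsto x^{(\alpha-1)/\alpha}$. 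When this exponent already dominates $(1-\gamma_0/\alpha)p$ (which occurs when $\gamma_0 \geq p/(p+\alpha-1)$), the boundary term is absorbed into the forcing $c\,\epsilon^{(1-\gamma_0/\alpha)p}$ and Dalang's lemma finishes the proof at once; otherwise one feeds the just-obtained improved bound back into the recursion, each iteration lifting the boundary exponent by $\gamma_0(\alpha-1)/\alpha$. Finitely many rounds (a number depending only on $p,\gamma_0,\alpha$) suffice to push the boundary exponent past the target, after which Dalang's Gronwall closes the argument.
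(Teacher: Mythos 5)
Your estimate for $A_1$ via the mean value theorem is equivalent to the paper's computation \eqref{eq2017-09-14-1}, which instead uses the elementary inequality $(1+x)^{(\alpha-1)/\alpha}-1\leq \tfrac{\alpha-1}{\alpha}x$ after pulling out $(c_0\epsilon^{\gamma_0})^{(\alpha-1)/\alpha}$; both exploit the same concavity and the condition $t-s>c_0\epsilon^{\gamma_0}$ in the same way. Your remarks on $A_{21}$ and $A_{31}$ also track the paper: $A_{21}$ inherits the improved factor $|\int_{s-\epsilon}^s\int G_\alpha^2|^p$, while $A_{31}\leq c\epsilon^p$ trivially satisfies $\epsilon^p\leq\epsilon^{(1-\gamma_0/\alpha)p}$ without needing the refined input. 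The concern you raise about the Gronwall step is legitimate: the paper simply writes $K_s(t)\leq c\epsilon^{(1-\gamma_0/\alpha)p}+\overline{c}\int_s^t(t-\theta)^{-1/\alpha}K_s(\theta)\,d\theta$ and applies Dalang's Gronwall lemma to conclude $K_s(t)\leq c\epsilon^{(1-\gamma_0/\alpha)p}$ for all $t\geq s$, without remarking that the improved forcing terms $A_1$, $A_{21}$ were derived only under $t-s>c_0\epsilon^{\gamma_0}$; since $K_s$ is a cumulative supremum over $\lambda\in[s,t]$ and $H_i(s,y)$ is genuinely of order $\epsilon^{(\alpha-1)p/\alpha}>\epsilon^{(1-\gamma_0/\alpha)p}$, the stated Volterra inequality cannot hold uniformly in $t$, so the paper does gloss over exactly the subtlety you point out.

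However, your proposed iteration does not close this gap. After splitting the kernel integral at $\theta=s+c_0\epsilon^{\gamma_0}$, the boundary contribution $\int_s^{s+c_0\epsilon^{\gamma_0}}(t-\theta)^{-1/\alpha}K_s(\theta)\,d\theta\leq c\,\epsilon^{(\alpha-1)(p+\gamma_0)/\alpha}$ is bounded using $K_s(\theta)\leq c\epsilon^{(\alpha-1)p/\alpha}$ for $\theta$ in the window $[s,s+c_0\epsilon^{\gamma_0}]$, and that estimate \emph{cannot} be improved by bootstrapping: the refined lemma only applies to times $\theta$ with $\theta-s>c_0\epsilon^{\gamma_0}$, so however many rounds one iterates, the integrand on the boundary window stays pinned at $\epsilon^{(\alpha-1)p/\alpha}$ and the boundary exponent $(\alpha-1)(p+\gamma_0)/\alpha$ is fixed, not "lifted by $\gamma_0(\alpha-1)/\alpha$ each round" as you claim. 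Similarly, the main piece $\int_{s+c_0\epsilon^{\gamma_0}}^t(t-\theta)^{-1/\alpha}K_s(\theta)\,d\theta$ produces a factor $(t-s)^{(\alpha-1)/\alpha}$ of constant order, so feeding an improved bound for $K_s$ on $[s+c_0\epsilon^{\gamma_0},t]$ back in returns the \emph{same} power of $\epsilon$, giving no gain per iteration. The bootstrap therefore stalls and delivers $\epsilon^{\min((1-\gamma_0/\alpha)p,\,(\alpha-1)(p+\gamma_0)/\alpha)}$ at best, which matches the target only when $\gamma_0\geq p/(p+\alpha-1)$ — not for the full stated range $0<\gamma_0<1$, $p\geq1$. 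A correct resolution would need either a non-cumulative quantity in place of $K_s$ together with a sharper $\theta$-dependent forcing, or a different inequality than the crude H\"older-with-respect-to-$G_\alpha^2\,d\theta\,d\eta$ step in \eqref{eq2017-09-06-15} that produces the lossy $K_s(\theta)$ factor; as written, neither your proposal nor the paper's one-line Gronwall supplies this.
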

\begin{proof}
We  use the same notations as in the proof of Lemma \ref{lemma2017-09-06-1}.
First, under the condition $t - s > c_0\epsilon^{\gamma_0}$, using \eqref{eq2017-09-05-1}, we have
\begin{align} \label{eq2017-09-14-1}
& \int_{s - \epsilon}^sdr\int_{\mathbb{R}}dv \, G_{\alpha}^2(t - r, x - v) = c((t - s + \epsilon)^{\frac{\alpha - 1}{\alpha}} - (t - s )^{\frac{\alpha - 1}{\alpha}})\nonumber\\
&\quad\leq c((c_0\epsilon^{\gamma_0} + \epsilon)^{\frac{\alpha - 1}{\alpha}} - (c_0\epsilon^{\gamma_0} )^{\frac{\alpha - 1}{\alpha}})=c(c_0\epsilon^{\gamma_0} )^{\frac{\alpha - 1}{\alpha}}((1 + c_0^{-1}\epsilon^{1 - \gamma_0})^{\frac{\alpha - 1}{\alpha}} - 1)\nonumber\\
&\quad\leq c(c_0\epsilon^{\gamma_0})^{\frac{\alpha - 1}{\alpha}}c_0^{-1}\epsilon^{1 - \gamma_0}(\alpha - 1)/\alpha=c\, c_0^{-1/\alpha}\epsilon^{1 - \frac{\gamma_0}{\alpha}}(\alpha - 1)/\alpha,
\end{align}
where the first inequality is because the function $x \mapsto (x + \epsilon)^{\frac{\alpha - 1}{\alpha}} - x^{\frac{\alpha - 1}{\alpha}}$ is decreasing on $[c_0\epsilon^{\gamma_0}, \infty[$, and the second inequality is due to $(1 + x)^{\frac{\alpha - 1}{\alpha}} - 1 \leq \frac{\alpha - 1}{\alpha}x$, for all $x \geq 0$. Therefore,
\begin{align}\label{eq2017-09-14-2}
A_1 \leq c\, \epsilon^{(1 - \frac{\gamma_0}{\alpha})p}.
\end{align}
Using (\ref{eq2017-09-14-1}) instead of \eqref{eq2017-09-06-5}, we see that
\begin{align}\label{eq2017-09-14-3}
A_{21} &\leq c\, \epsilon^{(1 - \frac{\gamma_0}{\alpha})p}.
\end{align}
Due to the choice of $\gamma_0$ and by (\ref{eq2017-09-06-10}), we have
\begin{align}\label{eq2017-09-14-4}
A_{31} \leq c\, \epsilon^p \leq c'\, \epsilon^{(1 - \frac{\gamma_0}{\alpha})p}.
\end{align}
The estimates for other terms remain the same as in the proof of Lemma \ref{lemma2017-09-06-1}. Therefore, we have obtained that
\begin{align*}
K_s(t) &\leq c\, \epsilon^{(1 - \frac{\gamma_0}{\alpha})p} + c\int_s^t(1 + (t - \theta)^{-\frac{1}{\alpha}})K_s(\theta)d\theta \\
&\leq c\, \epsilon^{(1 - \frac{\gamma_0}{\alpha})p} + \overline{c}\int_s^t(t - \theta)^{-\frac{1}{\alpha}}K_s(\theta)d\theta.
\end{align*}
Applying Gronwall's lemma (\cite[Lemma 15]{Dal99}), we have
\begin{equation*}
K_s(t) \leq c\, \epsilon^{(1 - \frac{\gamma_0}{\alpha})p}, \quad \mbox{for all} \,\,\, s \leq t. \qedhere
\end{equation*}
\end{proof}

\begin{remark}\label{remark2017-11-21-2}
The result of Lemma \ref{lemma2017-09-14-1} is also valid for the solutions of stochastic heat equations with Neumann or Dirichlet boundary conditions in which case $\alpha = 2$. This is because the Green kernel of heat equation with Neumann or Dirichlet boundary conditions shares similar properties with the Green kernel of heat equation, which enables us to derive the same estimates as in  \eqref{eq2017-09-14-1}, \eqref{eq2017-09-14-2}, \eqref{eq2017-09-14-3} and \eqref{eq2017-09-14-4} for the solutions of stochastic heat equations with Neumann or Dirichlet boundary conditions.
\end{remark}

\end{appendices}

\noindent ACKNOWLEDGEMENT. \, This paper is based on F. Pu's Ph.D. thesis, written under the supervision of R. C. Dalang. We would like to thank Marta Sanz-Sol\'{e} for useful discussions about the contents of this paper.

\begin{small}

\vspace{1.5cm}

\noindent\textbf{Robert C. Dalang} and \textbf{Fei Pu.}
Institut de Math\'ematiques, Ecole Polytechnique
F\'ed\'erale de Lausanne, Station 8, CH-1015 Lausanne,
Switzerland.\\
Emails: \texttt{robert.dalang@epfl.ch} and \texttt{fei.pu@epfl.ch}\\
\end{small}
\end{document}